\DeclareMathOperator{\Aut}{Aut}
\DeclareMathOperator{\End}{End}
\DeclareMathOperator{\fsl}{\mathfrak{sl}}
\DeclareMathOperator{\rk}{rk}
\DeclareMathOperator{\Mat}{Mat}
\DeclareMathOperator{\Tor}{Tor}
\DeclareMathOperator{\Hom}{Hom}
\DeclareMathOperator{\Sym}{Sym}
\DeclareMathOperator{\Image}{im}
\DeclareMathOperator{\Ker}{ker}
\DeclareMathOperator{\HH}{H}
\newcommand{\bbf}{\mathbb{F}}
\newcommand{\bbz}{\mathbb{Z}}
\newcommand{\bbn}{\mathbb{N}}
\newcommand{\bbq}{\mathbb{Q}}
\newcommand{\bbc}{\mathbb{C}}
\newcommand{\bbr}{\mathbb{R}}
\newcommand{\ffg}{\mathfrak{g}}
\newcommand{\ffh}{\mathfrak{h}}
\newcommand{\Zalgebra}[1]{\bbz_p[\![#1]\!]}
\newcommand{\Qalgebra}[1]{\bbq_p[\![#1]\!]}
\newcommand{\scheme}[1]{\mathsf{#1}}
\newcommand{\schG}{\scheme G}
\newcommand{\cotimes}{\widehat{\otimes}}
\newcommand{\Wmodk}[1]{\scheme{W}_{\bm\lambda}^{#1}}
\DeclareMathOperator{\GL}{\scheme{GL}}
\DeclareMathOperator{\SL}{\scheme{SL}}
\newtheorem{thm}{Theorem}[section]
\newtheorem{lem}[thm]{Lemma}
\newtheorem{cor}[thm]{Corollary}
\newtheorem{prop}[thm]{Proposition}
\newtheorem{conj}[thm]{Conjecture}
\theoremstyle{definition}
\newtheorem{rmk}[thm]{Remark}
\newtheorem{defn}[thm]{Definition}
\title{Asymptotics of rational representations for algebraic groups}
\author{Lander Guerrero Sánchez, Henrique Souza}
\date{\today}
\begin{document}

\begin{abstract} We study the asymptotic behaviour of the cohomology of subgroups \(\Gamma\) of an algebraic group \(\schG\) with coefficients in the various irreducible rational representations of \(\schG\) and raise a conjecture about it. Namely, we expect that the dimensions of these cohomology groups approximate the \(\ell^2\)-Betti numbers of \(\Gamma\) with a controlled error term. We provide positive answers when \(\schG\) is a product of copies of \(\SL_2\). As an application, we obtain new proofs of J. Lott's and W. Lück's computation of the \(\ell^2\)-Betti numbers of hyperbolic \(3\)-manifolds and W. Fu's upper bound on the growth of cusp forms for non totally real fields, which is sharp in the imaginary quadratic case.
\end{abstract}

\maketitle

\section{Introduction}

Discrete subgroups of semi-simple Lie groups and their cohomology are ubiquitous in modern mathematics. In the context of geometry and topology, they appear as fundamental groups of locally symmetric Riemannian manifolds and their cohomology groups have a natural interpretation in terms of differential forms. In the context of number theory, arithmetic lattices encode critical information about number fields and their cohomology is related to the theory of automorphic forms. At this point, it goes without saying that there is an extensive and deep literature on this broad topic, and just to name a few major works we cite the classical books of M. S. Raghunathan \cite{raghunathanDiscreteSubgroupsLie1972} and A. Borel and N. Wallach \cite{borelContinuousCohomologyDiscrete2000}.

Within those contexts, there is significance in the study of the asymptotic behaviour of the cohomology groups \(\HH^i(\Gamma,\scheme{W})\) when \(\Gamma\) is a discrete subgroup of a semi-simple Lie group \(\schG\) and \(\scheme{W}\) is a finite dimensional complex representation of \(\schG\). This asymptotic behaviour could be considered either with respect to going down a residual chain of finite index subgroups of \(\Gamma\) or with respect to taking larger and larger representations \(\scheme{W}\) of \(\schG\). The former is the realm of Lück approximation (\cite{luck_approximation_1994,jaikin-zapirain_base_2019,boschheidgenTwistedL2Betti}, cf. Appendix~\ref{sec-vonNeumann}), and we are concerned with the latter. 

Therefore, given a subgroup \(\Gamma\) and a sequence of finite dimensional complex representations \(\{\scheme{W}_k\}\) of a semi-simple Lie group \(\schG\) with \(\lim_{k \to \infty} \dim \scheme{W}_k = \infty\), we are interested in the following questions:

\begin{enumerate}
    \item Does the sequence \begin{equation}\label{eq-normalized-cohomology}
        \left\{\frac{\dim \HH^i(\Gamma,\scheme{W}_k)}{\dim \scheme{W}_k}\right\}\tag{\(*\)}
    \end{equation} converge and, if so, can we describe the limit?
    \item Does this limit depend on the choice of \(\scheme{W}_k\)?
    \item Can we estimate the rate of convergence?
\end{enumerate}

In this present work we will conjecture an answer for all three questions when \(\schG\) is the group of complex points of a semi-simple algebraic group and \(\Gamma\) is a subgroup of \(\schG\) of type \(FP_\infty\) (not necessarily discrete). Moreover, we will provide positive answers in the case when \(\schG\) is a product of copies of \(\SL_2(\bbc)\).

We expect that the answer to questions (1) and (2) depends on two factors. The first factor is how the central elements \(\scheme Z(\schG)\cap \Gamma\) act on each representation. More precisely, each irreducible representation \(\scheme{W}\) of \(\schG\) induces a central character \(\chi\colon \scheme Z(\schG) \to \bbc^\times\). If~(\ref{eq-normalized-cohomology}) is to converge for a sequence of irreducible representations \(\{\scheme W_k\}\), we need them to induce the same central character when restricted to \(\Gamma\). An explicit illustration of this fact is given at the end of Remark~\ref{rmk-twisted-torsion-free}. Clearly this is not an obstruction if \(\Gamma\) is torsion-free, since then \(\scheme Z(\schG)\) intersects \(\Gamma\) trivially.

The second factor regards in which direction along the weight lattice of \(\schG\) is the sequence of representations \(\{\scheme W_k\}\) growing. We recall that to each irreducible representation \(\scheme W\) of \(\schG\) one can associate a tuple \(\bm\lambda = (\lambda_1,\ldots,\lambda_n)\) of non-negative integers called the highest weight of \(\scheme W\), and this highest weight characterizes \(\scheme W\) up to a \(\schG\)-equivariant isomorphism. To ensure convergence, we ask that the representations \(\scheme W_k\) are all irreducible and that all their weight parameters \(\lambda_i = \lambda_i(k)\) grow to infinity, i.e. \(\lim_{k\to \infty}\min \lambda_i = \infty\). 

For the statement of the conjectures, we fix an arbitrary semi-simple algebraic group \(\schG\) over \(\bbc\) and we assume that \(\Gamma\) is a subgroup of \(\schG\) of type \(FP_\infty\) (not necessarily discrete). Our conjectural answer to questions (1) and (2) is as follows:

\begin{conj}\label{conjecture-qualitative} If \(\Gamma\) is torsion-free and \(\{\scheme W_k\}\) is a sequence of irreducible representations of \(\schG\) whose highest weight parameters grow to infinity, then \[\lim_{k \to \infty} \frac{\dim \HH^i(\Gamma, \scheme W_k)}{\dim \scheme W_k} = b_i^{(2)}(\Gamma)\,,\] where \(b_i^{(2)}(\Gamma)\) denotes the \(i\)-th \(\ell^2\)-Betti number of \(\Gamma\).
\end{conj}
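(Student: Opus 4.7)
The plan is to reduce Conjecture~\ref{conjecture-qualitative} to a rank--approximation statement for matrices over $\bbc\Gamma$, and then establish that statement in the $\SL_2(\bbc)^n$ setting via explicit representation theory. Since $\Gamma$ is of type $FP_\infty$, I fix a resolution $P_\bullet \twoheadrightarrow \bbc$ with $P_i = \bbc\Gamma^{n_i}$, encoded by matrices $d_i$ over $\bbc\Gamma$. Applying $\Hom_{\bbc\Gamma}(-,\scheme W_k)$ yields a cochain complex of $\bbc$-vector spaces $\scheme W_k^{n_i}$ whose coboundaries are obtained by letting the transposes of the $d_i$ act entrywise through $\rho_k\colon \Gamma \to \GL(\scheme W_k)$. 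Rank--nullity then gives
\[\frac{\dim H^i(\Gamma,\scheme W_k)}{\dim \scheme W_k} \;=\; n_i \;-\; \frac{\rk \rho_k(d_{i+1}^\top)}{\dim \scheme W_k} \;-\; \frac{\rk \rho_k(d_i^\top)}{\dim \scheme W_k},\]
while the same resolution computes $b_i^{(2)}(\Gamma) = n_i - \dim_{\mathcal N(\Gamma)}\overline{\ima\, d_{i+1}^\top} - \dim_{\mathcal N(\Gamma)}\overline{\ima\, d_i^\top}$.

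Consequently, Conjecture~\ref{conjecture-qualitative} reduces to the rank--approximation lemma: for every matrix $\alpha$ over $\bbc\Gamma$,
\[\lim_{k\to\infty}\frac{\rk_{\bbc} \rho_k(\alpha)}{\dim \scheme W_k} \;=\; \dim_{\mathcal N(\Gamma)}\overline{\ima(\alpha)}.\]
Applied to each of the finitely many matrices $d_i^\top$ appearing in the resolution, termwise comparison of the two displayed formulas immediately yields the conjecture, so everything is concentrated in this one lemma.

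The main obstacle is this rank--approximation lemma. Unlike Lück approximation, the restriction $\rho_k|_\Gamma$ is \emph{not} unitary, and the naive heuristic $\chi_{\scheme W_k}(\gamma)/\dim \scheme W_k \to \delta_{\gamma,e}$ fails badly: for a loxodromic $\gamma$ inside a lattice $\Gamma \subset \SL_2(\bbc)$ this ratio is unbounded in $k$, because the Weyl character formula gives $\chi_{V_n}(\gamma) \sim \lambda^{n+1}/(\lambda-\lambda^{-1})$ with $|\lambda|>1$. One must therefore argue directly with the $\bbc\Gamma$-module structure of $\scheme W_k$ rather than with operator norms or character estimates. In the $\SL_2(\bbc)^n$ case, I would exploit the explicit realisation $\scheme W_k = \Sym^{\lambda_1}(\bbc^2)\boxtimes\cdots\boxtimes\Sym^{\lambda_n}(\bbc^2)$, using Clebsch--Gordan decompositions and weight--space filtrations to compare $\rk \rho_k(\alpha)$ against the rank of $\alpha$ on a suitable finite--dimensional quotient of $\bbc\Gamma$ which can in turn be matched with the von Neumann rank. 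The hypothesis $\min_i\lambda_i(k) \to \infty$ should provide enough spread in the weight lattice for the combinatorics governing cancellations in $\rho_k(\alpha)$ to become independent, asymptotically, of the particular direction of growth; this is exactly the regime where the induced finite--dimensional quotients faithfully detect the von Neumann rank, and where the two sides of the lemma can be matched.
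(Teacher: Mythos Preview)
Your reduction step is correct and essentially matches the paper: Proposition~\ref{prop-homology-approx} does exactly this computation, expressing both $\dim \HH_i(\Gamma,\Wmodk{})/\dim \Wmodk{}$ and $b_i^{(2)}(\Gamma)$ as $n_i - \rk(A) - \rk(B)$ for the two rank functions, so the conjecture indeed reduces to the rank--approximation statement $\rk^\Gamma_{\Wmodk{}}\to\rk_\Gamma$ (this is Conjecture~\ref{lie-luck-conj}).

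The genuine gap is in the second half. Your plan for the rank--approximation lemma in the $\SL_2^n$ case --- ``Clebsch--Gordan decompositions and weight--space filtrations'' to match $\rk\rho_k(\alpha)$ against ``a suitable finite--dimensional quotient of $\bbc\Gamma$'' --- is not a proof but a hope, and it contains none of the ideas the paper actually uses. You correctly observe that trace/character estimates fail, but you offer no mechanism that links the representation ranks to the von Neumann rank; weight--space combinatorics of $\Sym^{\lambda_i}\bbc^2$ see only the ambient algebraic group, not the specific discrete $\Gamma$, and there is no ``finite--dimensional quotient of $\bbc\Gamma$'' in sight that would detect $\rk_\Gamma$.

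The paper's route is completely different and has several moving parts you are missing. First, one embeds $\Gamma\le\schG(\bbz_p)$ for infinitely many $p$ via Cassels' theorem, and passes to a finite--index normal $\Gamma'$ whose $p$-adic closure $G$ is uniform inside the first congruence subgroup $U_1$. Harris' theorem (a $p$-adic Lück approximation, Theorem~\ref{thm-harris}) together with ordinary Lück approximation identifies $\rk_{\Gamma'}$ with $\rk_{\Qalgebra{U_1}}$, while a direct tensor argument identifies $\rk^{\Gamma'}_{\Wmodk{}}$ with $\rk^{U_1}_{\Wmodk{\bbq_p}}$. One then uses that $\Zalgebra{U_1}$ is an Ore domain to reduce from matrices to single elements (Lemma~\ref{lem:approxim-ore-domains}), and finally invokes the Ardakov--Wadsley/Fu result that any nonzero $x\in\Zalgebra{U_1}$ is \emph{generic} in $\widehat{U(\ffg)}$, giving a polynomial bound $\dim\Hom(M,\Wmodk{\bbq_p})\le f_M(\lambda_1,\ldots,\lambda_n)$ of total degree $n-1$. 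Dividing by $\prod(\lambda_i+1)$ yields the $O(1/\min\lambda_i)$ estimate. A separate argument (Appendix~\ref{sec-crossed-products}, using that $\mathcal N(\Gamma)_\chi$ is a factor and uniqueness of rank functions on $*$-regular rings with field center) is needed to pass from $\Gamma'$ back to $\Gamma$. None of this structure --- the $p$-adic completion, the Ore/domain reduction, the genericity input, or the factor argument for the finite--index step --- appears in your proposal.
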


As for question (3), we conjecture the following stronger statement:

\begin{conj}\label{conjecture-quantitative} If \(\Gamma\) is torsion-free and \(\lambda_i = \lambda_i(k)\) denote the highest weight parameters of the representations \(\scheme W_k\), then \begin{equation}\label{eq-normalized-cohomology-err}
    \left|\frac{\dim \HH^i(\Gamma, \scheme W_k)}{\dim \scheme W_k} - b_i^{(2)}(\Gamma)\right| = O\left(\frac{1}{\min \{\lambda_1,\ldots,\lambda_n\}}\right)\,.\tag{\(**\)}
\end{equation}
\end{conj}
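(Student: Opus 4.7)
The plan is to establish the estimate (\ref{eq-normalized-cohomology-err}) in the case $\schG = \SL_2^n$ by packaging all the highest-weight representations $\scheme W_{\bm\lambda}$ simultaneously into a single ``universal'' $\Gamma$-module and then extracting each graded piece. Since $\Gamma$ is of type $FP_\infty$, I would first fix a resolution $F_\bullet \to \bbc$ by finitely generated free $\bbc\Gamma$-modules, so that $\HH^i(\Gamma, \scheme W_k) = \HH^i(\Hom_\Gamma(F_\bullet, \scheme W_k))$. The Euler-characteristic identity then yields $\sum_i (-1)^i \dim \HH^i(\Gamma, \scheme W_k)/\dim \scheme W_k = \chi(\Gamma) = \sum_i(-1)^i b_i^{(2)}(\Gamma)$, so the conjectured estimate is automatic on average; the real task is to distribute it across each individual degree $i$ with the predicted error rate.

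The natural packaging is the polynomial ring $A = \bbc[x_1,y_1,\dots,x_n,y_n]$, on which $\SL_2^n$ acts linearly with each pair $(x_i,y_i)$ carrying the standard representation of the $i$-th factor. The $\bbn^n$-graded piece $A_{\bm\lambda}$ is canonically isomorphic to $\scheme W_{\bm\lambda} = \bigotimes_i \Sym^{\lambda_i}(\bbc^2)$, so $\Hom_\Gamma(F_\bullet, A)$ is a $\bbn^n$-graded complex whose $\bm\lambda$-component computes $\HH^\bullet(\Gamma, \scheme W_{\bm\lambda})$. The key auxiliary statement I would aim for is a Lück-approximation-style identity for the completion $\widehat A = \bbc[\![x_1,y_1,\dots,x_n,y_n]\!]$: after tensoring with $\mathcal N(\Gamma)$ to form an appropriate bimodule $\mathcal N(\Gamma) \cotimes \widehat A$, the $i$-th cohomology of $\Hom_\Gamma(F_\bullet, \mathcal N(\Gamma) \cotimes \widehat A)$ should have normalised von Neumann dimension equal to $b_i^{(2)}(\Gamma)$. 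Its proof would combine the base-change theorems for $\mathcal N(\Gamma)$ over $\bbc\Gamma$ cited in the introduction with an Artin--Rees-type stability of the $(x,y)$-adic filtration on $\widehat A$.

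Given the auxiliary statement, the conjecture at each individual $\bm\lambda$ would follow from a trace-expansion argument. The quantity $\dim \HH^i(\Gamma, \scheme W_{\bm\lambda})/\dim \scheme W_{\bm\lambda}$ can be rewritten as a normalised trace of the orthogonal projection onto the kernel-modulo-image subspace of $\scheme W_{\bm\lambda}^{n_i}$; as $\bm\lambda$ grows this normalised trace must converge to the von Neumann trace defining $b_i^{(2)}(\Gamma)$, and the discrepancy is controlled by the contribution of the ``boundary weights'' of $\scheme W_{\bm\lambda}$. A direct count using the Weyl character formula shows that the combined multiplicity of weights lying within distance one of the boundary of the weight polytope occupies a fraction of at most $O(1/\min_i \lambda_i)$ of $\dim \scheme W_{\bm\lambda}$, which matches the rate predicted by the conjecture.

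The main obstacle is the auxiliary statement about $\widehat A$-coefficients. The subtlety is that $\widehat A$ is not a bimodule over $\bbc\Gamma$ in any sense that would allow a direct invocation of Lück's theory, so one must develop the compatible dimension theory over $\mathcal N(\Gamma) \cotimes \widehat A$ and verify that it is computed correctly on each graded piece. I expect this is where the $FP_\infty$ hypothesis on $\Gamma$ and the specific structural features of $\SL_2^n$ — in particular the self-duality $V_\lambda \cong V_\lambda^*$ and the commutativity of $\widehat A$ — enter in an essential way, whereas the Hilbert-series-type arguments above should proceed much more mechanically.
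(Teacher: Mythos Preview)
First, note that the statement is a \emph{conjecture}: the paper does not prove it in general, and even in the case $\schG = \SL_2^n$ it establishes the estimate only \emph{virtually} (Theorem~\ref{thm-cohomology-sl2}), i.e.\ for some finite-index subgroup of $\Gamma$. Your plan targets the same case but for $\Gamma$ itself, which is slightly more than what the paper actually obtains.

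Your route is entirely different from the paper's. The paper does not work with the polynomial ring $\bbc[x_1,y_1,\dots,x_n,y_n]$ or its completion at all. Instead it (i) reformulates the problem in terms of Sylvester matrix rank functions on $\bbc[\Gamma]$ (Conjectures~\ref{lie-luck-conj} and~\ref{lie-luck-conj-quant}), (ii) uses Cassels' embedding theorem to push $\Gamma$ into $\schG(\bbz_p)$ and passes to a finite-index subgroup landing in the first congruence subgroup $U_1$, (iii) compares the rank functions $\rk^{U_1}_{\Wmodk{\bbq_p}}$ and $\rk_{\Qalgebra{U_1}}$ via the completed universal enveloping algebra $\widehat{U(\ffg)}$, and (iv) invokes Fu's polynomial bound on $\dim_{\bbq_p}\Hom_{\widehat{U(\ffg)}}(M,\Wmodk{\bbq_p})$ for generic elements. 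The quantitative error $O(1/\min\lambda_i)$ comes directly from Fu's bound divided by the Weyl dimension $\prod_i(\lambda_i+1)$; no weight-polytope counting enters.

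Your proposal has two genuine gaps. The first you already flag: the object $\mathcal N(\Gamma)\cotimes\widehat A$ is not defined, and there is no existing dimension theory that would make the ``auxiliary statement'' meaningful, let alone true. The second is more serious and you do not flag it: the assertion that the discrepancy between the normalised trace on $\scheme W_{\bm\lambda}$ and the von Neumann trace is ``controlled by the contribution of the boundary weights'' has no justification. The weight decomposition of $\scheme W_{\bm\lambda}$ is with respect to the maximal torus $\scheme T$, not with respect to $\Gamma$; a generic finitely generated $\Gamma\leq\SL_2^n$ does not preserve any weight filtration, so there is no mechanism by which the error in the cohomology computation would localise to boundary weight spaces. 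The Weyl-character count you give is correct as a volume estimate, but it is not connected to anything in the argument. Without that link, the trace-expansion step is a heuristic, not a proof.
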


For \(\Gamma\) not necessarily torsion-free, under the additional hypothesis that the representations \(\scheme W_k\) induce the same central character \(\chi\) on \(\scheme Z(\schG) \cap \Gamma\), we expect the sequence~(\ref{eq-normalized-cohomology}) to converge to the \(i\)-th \(\ell^2\)-Betti number of \(\Gamma\) twisted by the character \(\chi\), see Sections~\ref{sec-liemodules} and~\ref{sec-Betti-numbers}. We also expect that an analogue of~(\ref{eq-normalized-cohomology-err}) holds with \(b_i^{(2)}(\Gamma)\) replaced by the \(i\)-th \(\ell^2\)-Betti number of \(\Gamma\) twisted by the central character \(\chi\), which is assumed to be constant throughout all the representations \(\scheme W_k\).

As evidence for the conjectures, we prove them when all the irreducible factors of \(\schG\) are of type \(A_1\):

\begin{thm}\label{thm-cohomology-sl2} Let \(\schG\) be the quotient of a product of finitely many copies of \(\SL_2(\bbc)\) by a finite central subgroup and \(\Gamma\) be a torsion-free subgroup of \(\schG\) of type \(FP_\infty\). Then, Conjecture~\ref{conjecture-qualitative} holds for \(\Gamma\) and Conjecture~\ref{conjecture-quantitative} holds for some finite index subgroup of \(\Gamma\).
\end{thm}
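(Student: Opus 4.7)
The plan is to reduce to $\schG = \SL_2(\bbc)^n$ itself and then to compare, at the level of matrices over $\bbc[\Gamma]$, the finite-dimensional cohomology of $\Gamma$ in $\scheme W_k$ with the $\ell^2$-cohomology of $\Gamma$. For the reduction, one replaces $\Gamma$ by its preimage $\tilde\Gamma$ in $\SL_2(\bbc)^n$; the Hochschild--Serre spectral sequence for the finite central extension $1 \to \scheme K \to \tilde\Gamma \to \Gamma \to 1$ collapses in characteristic zero, and since every $\scheme W_k$ descends from $\schG$ the subgroup $\scheme K$ acts trivially, so $H^i(\Gamma,\scheme W_k)=H^i(\tilde\Gamma,\scheme W_k)$ and the $\ell^2$-Betti numbers scale by a factor of $|\scheme K|$. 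One may thus work with $\SL_2(\bbc)^n$, whose irreducibles are the external tensor products $V_{\lambda_1}\boxtimes\cdots\boxtimes V_{\lambda_n}$ of the $(\lambda+1)$-dimensional modules $V_\lambda=\Sym^\lambda(\bbc^2)$.

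Next, I would fix a free resolution $P_\bullet \to \bbc$ of the trivial $\bbc[\Gamma]$-module with finitely generated terms, available since $\Gamma$ is of type $FP_\infty$, and compute $H^i(\Gamma, \scheme W_k)$ as the cohomology of the finite-dimensional complex $\Hom_\Gamma(P_\bullet, \scheme W_k)$. The boundary maps are assembled from the action on $\scheme W_k$ of a fixed finite set of elements of $\Gamma$, and the $\ell^2$-Betti numbers arise analogously from the complex $\ell^2(\Gamma)\otimes_{\bbc[\Gamma]} P_\bullet$. The theorem thus reduces to a matrix-level statement: for any fixed matrix $A$ over $\bbc[\Gamma]$, the normalised rank $\rk(A|_{\scheme W_k^m})/\dim \scheme W_k$ converges to the von Neumann dimension $\dim_{\mathcal N(\Gamma)}\ima(A)$, and at rate $O(1/\min\lambda_i)$ under the quantitative conjecture.

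The core of the proof is then an approximation theorem for sequences of irreducibles, and the $A_1$ hypothesis is essential here: the character of $V_\lambda$ on a semisimple element with eigenvalues $e^{\pm i\theta}$ is the Chebyshev-like ratio $\sin((\lambda+1)\theta)/\sin\theta$, which yields a precise asymptotic $\text{tr}(g|_{\scheme W_k})/\dim\scheme W_k \to \delta_{g=1}$ with quantitative rate $O(1/\min\lambda_i)$ on the non-identity locus. Combined with Weyl's unitary trick and the standard comparison between characters and the canonical trace on $\bbc[\Gamma]$, this produces a Lück-type trace convergence on matrix algebras $\Mat(\bbc[\Gamma])$ acting on $\scheme W_k$; converting trace convergence into rank convergence by the usual spectral approximation then yields Conjecture~\ref{conjecture-qualitative}.

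The main obstacle is the sharp error term in Conjecture~\ref{conjecture-quantitative}. The passage from trace convergence to rank convergence, carried out by approximating the indicator $\mathbf{1}_{(0,\infty)}$ of the positive part of the spectrum by polynomials in the Lück style, typically degrades a rate of $O(1/\min\lambda_i)$ in traces to something significantly worse in ranks. Recovering the full quantitative rate requires uniform spectral control on the operators $A|_{\scheme W_k}$ — for instance a Nikolov--Segal type lower bound on the positive eigenvalues, or a polynomial-flat spectrum hypothesis — which need not hold on $\Gamma$ itself but can be arranged on a finite index subgroup, e.g.\ one on which a chosen generating set acts with prescribed genericity on each $\scheme W_k$. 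This is what forces the finite index subgroup clause in the statement.
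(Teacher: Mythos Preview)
The core of your proposal---the character/trace-based approximation---has a fatal gap. Your claim that \(\operatorname{tr}(g\mid_{\scheme W_k})/\dim\scheme W_k \to \delta_{g=1}\) with Chebyshev-type rate only makes sense for elements \(g\) conjugate into the compact form \(\prod SU(2)\), i.e.\ elliptic elements. But a torsion-free subgroup \(\Gamma\) of \(\SL_2(\bbc)^n\) will typically contain parabolic (unipotent) and loxodromic elements, and for these the normalized trace does \emph{not} tend to zero. If \(g\) is unipotent in some factor, its trace on that \(V_\lambda\) equals \(\lambda+1\), so the normalized trace is identically \(1\); if \(g\) is loxodromic with eigenvalues \(\mu^{\pm1}\), \(|\mu|>1\), the trace on \(V_\lambda\) is \((\mu^{\lambda+1}-\mu^{-\lambda-1})/(\mu-\mu^{-1})\) and the normalized trace diverges exponentially. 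Weyl's unitary trick is of no help here: it classifies representations of \(\SL_2(\bbc)\) via \(SU(2)\), but it cannot convert a non-compact element of \(\Gamma\) into a compact one. Consequently the L\"uck-style trace-to-rank machinery never gets off the ground---the input traces are unbounded, so no polynomial spectral approximation can be run.

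The paper's proof is entirely different and avoids characters altogether. It first reformulates the cohomological conjectures as statements about Sylvester matrix rank functions on \(\bbc[\Gamma]\) (Section~\ref{sec-Betti-numbers}), then transports the problem to a \(p\)-adic setting via Cassels' embedding theorem (Section~\ref{sec-pro-p}): one finds a prime \(p\) with \(\Gamma \leq \schG(\bbz_p)\), passes to the finite-index subgroup \(\Gamma' = \Gamma \cap U_1\) inside the first congruence subgroup (a uniform pro-\(p\) group), and works in the completed group algebra \(\Qalgebra{U_1}\) and its Ore localisation \(\widehat{U(\ffg)}\). The decisive technical input (Section~\ref{sec-SL2}) is a theorem of W.~Fu on generic elements of \(\widehat{U(\ffg)}\) for \(\ffg = \fsl_2(\bbq_p)^n\), bounding \(\dim_{\bbq_p}\Hom_{\widehat{U(\ffg)}}(M,\Wmodk{\bbq_p})\) by a polynomial of total degree \(n-1\) in the \(\lambda_i\); dividing by \(\dim\Wmodk{\bbq_p} = \prod(\lambda_i+1)\) gives exactly the \(O(1/\min\lambda_i)\) rate. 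The finite-index clause in the statement comes from the need to land inside \(U_1\), not from any spectral genericity on a subgroup as you suggested.
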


Let us now describe how Theorem~\ref{thm-cohomology-sl2} can be applied in a topological and an arithmetical setting. We start by remarking that if \(M\) is a \(3\)-manifold admitting a complete hyperbolic metric and \(\Gamma = \pi_1(M) \leq \SL_2(\bbc)\) is finitely generated, then \(\Gamma\) is actually of type \(F\) and hence our results apply. Using the calculations of \(\dim \HH^i(\Gamma, \Sym^\lambda \bbc^2)\) by P. Menal-Ferrer and J. Porti in \cite{menal-ferrer_twisted_2012}, we obtain the following corollary:

\begin{cor}\label{cor-1} The \(\ell^2\)-Betti numbers of a topologically finite \(3\)-manifold \(M\) with a complete hyperbolic metric are given by \[b_i^{(2)}(M) = \begin{cases}
    0,&\text{if }i\neq 1\,,\\
    -\chi(M),&\text{if }i = 1\,.
\end{cases}\]
In particular, if the volume of \(M\) is finite then \(M\) is \(\ell^2\)-acyclic.
\end{cor}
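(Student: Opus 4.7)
The plan is to combine Theorem~\ref{thm-cohomology-sl2} with Menal-Ferrer and Porti's explicit computation of $\HH^i(\Gamma,\Sym^\lambda\bbc^2)$, and then to pin down the one surviving Betti number via an Euler-characteristic count. Since $M$ admits a complete hyperbolic metric, $\Gamma = \pi_1(M)$ sits inside $\mathrm{Isom}^+(\mathbb{H}^3) \cong \mathrm{PSL}_2(\bbc) = \SL_2(\bbc)/\{\pm I\}$ as a discrete, torsion-free subgroup, matching the group $\schG$ in Theorem~\ref{thm-cohomology-sl2}. Because $\mathbb{H}^3$ is contractible, $M$ is a $K(\Gamma,1)$; topological finiteness then upgrades this to $\Gamma$ being of type $F$, so that $\Gamma$ is of type $FP_\infty$ and $b_i^{(2)}(\Gamma)=b_i^{(2)}(M)$.

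I would then apply Theorem~\ref{thm-cohomology-sl2} to the sequence of irreducible representations $\scheme{W}_k = \Sym^{2k}\bbc^2$, which factor through $\mathrm{PSL}_2(\bbc)$ and have dimension $2k+1$ with highest weight parameter $2k \to \infty$. The theorem yields
\[
b_i^{(2)}(M)\;=\;\lim_{k\to\infty}\frac{\dim \HH^i(\Gamma,\Sym^{2k}\bbc^2)}{2k+1}.
\]
Now I would invoke \cite{menal-ferrer_twisted_2012}, where it is shown that for $i \in \{0,2,3\}$ the dimensions $\dim \HH^i(\Gamma,\Sym^{2k}\bbc^2)$ are bounded (and in fact often zero) as $k$ grows: $\HH^0$ and $\HH^3$ vanish for $k\geq 1$ by irreducibility and Poincaré duality, while $\HH^2$ contributes only a bounded number of cusp classes. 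Dividing by $2k+1$ and passing to the limit gives $b_i^{(2)}(M)=0$ for $i\neq 1$.

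The remaining $\ell^2$-Betti number is computed from multiplicativity of the Euler characteristic: since $\Gamma$ is of type $F$,
\[
\sum_i (-1)^i \dim \HH^i(\Gamma,\Sym^{2k}\bbc^2) \;=\; (2k+1)\,\chi(\Gamma) \;=\; (2k+1)\,\chi(M),
\]
so dividing by $2k+1$ and taking the limit gives $\sum_i(-1)^i b_i^{(2)}(M) = \chi(M)$, and the previously established vanishing of $b_i^{(2)}(M)$ for $i\neq 1$ forces $b_1^{(2)}(M) = -\chi(M)$. The final assertion, $\ell^2$-acyclicity in the finite-volume case, then reduces to the classical fact that a complete finite-volume hyperbolic $3$-manifold has $\chi(M) = 0$ (by Poincaré duality when $M$ is closed, and by decomposition into a compact core with toroidal boundary otherwise).

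The main obstacle I anticipate is matching the regime of Menal-Ferrer–Porti's computations (finite-volume cusped manifolds) to the topologically finite setting of the corollary, which also permits geometrically finite infinite-volume ends; one needs to verify that their boundedness estimates on $\HH^0,\HH^2,\HH^3$ persist in that broader context, possibly by reducing to the convex core or by invoking analogous results for Kleinian groups with geometrically finite ends.
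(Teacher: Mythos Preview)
Your proposal is correct and follows essentially the same route as the paper: identify $\Gamma\le\mathrm{PSL}_2(\bbc)$, use topological finiteness to get type $F$, feed the even-weight symmetric powers into Theorem~\ref{thm-cohomology-sl2}, and read off the limits from the Menal-Ferrer--Porti computation. The only cosmetic difference is that the paper quotes their explicit formula $\dim \HH_1(\Gamma,\Sym^\lambda\bbc^2)=k-(\lambda+1)\chi(M)$ and divides directly, whereas you recover $b_1^{(2)}(M)=-\chi(M)$ via the Euler-characteristic identity; these are of course the same calculation.

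Your stated obstacle is not actually an obstacle: the result of Menal-Ferrer and Porti cited as Proposition~\ref{prop-porti-ferrer} is formulated for an arbitrary hyperbolic $3$-manifold with finitely generated fundamental group (allowing higher-genus, infinite-volume ends), not just the finite-volume cusped case, and the term $-(\lambda+1)\chi(M)$ already absorbs the contribution of the non-toroidal ends. So no separate reduction to the convex core or to geometrically finite Kleinian groups is needed.
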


The \(\ell^2\)-Betti numbers of all compact \(3\)-manifolds were computed by J. Lott and W. Lück in \cite{luckL2TopologicalInvariants3manifolds1995}. Using the prime decomposition and combination theorems, the core of their calculations lies in the finite volume hyperbolic case.

Moving on to number theory, if \(\Gamma\) is a congruence subgroup of \(\prod_{i=1}^{r_1+2r_2} \SL_2(\bbc)\) for a number field \(F\) with \(r_1 + 2r_2 = (F\colon \bbq)\) embeddings into \(\bbc\), then the Eichler-Shimura isomorphism relates the homology groups \(\HH_{r_1+r_2}(\Gamma, \scheme W)\) with coefficients in an irreducible representation of highest weight \(\bm{\lambda} = (\lambda_i)\) with the space \(S_{\bm{k}}(\Gamma)\) of cusp forms of level \(\Gamma\) and weight \(\bm{k} = (k_i)\), where \(k_i = \lambda_i + 2\). 

Describing the asymptotic behaviour of \(\dim_{\bbc} S_{\bm{k}}(\Gamma)\) is a major problem in current number theory research (\cite{shimizuDiscontinuousGroupsOperating1963,savinLimitMultiplicitiesCusp1989, sarnakBoundsMultiplicitiesAutomorphic1991,calegariBoundsMultiplicitiesUnitary2009,finisCohomologyLatticesSL2010,marshallBoundsMultiplicitiesCohomological2012,fu}). Moreover, not much is known about the growth rate of this dimension when \(\bm{k}\) grows, except when \(F\) is a totally real field. Using Theorem~\ref{thm-cohomology-sl2}, we are able to give a new proof of the currently best known upper bound for \(S_{\bm{k}}(\Gamma)\) when \(F\) is not totally real:

\begin{cor}[{cf. \cite{fu}}]\label{cor-2} If \(F\) is not totally real, then \[\dim S_{\bm{k}}(\Gamma) = O\left(\frac{\prod_{i=1}^{r_1+2r_2} k_i}{\min k_i}\right)\,.\] In particular, \(\dim S_k(\Gamma) = O(k)\) if \(F\) is imaginary quadratic.
\end{cor}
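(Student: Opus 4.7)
The plan is to bound \(\dim S_{\bm k}(\Gamma)\) by a group-cohomology dimension through the Eichler--Shimura isomorphism, apply Theorem~\ref{thm-cohomology-sl2} to that cohomology, and invoke the vanishing of the relevant \(\ell^2\)-Betti number, which follows from \(F\) not being totally real. I would first use Selberg's lemma to pick a torsion-free normal subgroup \(\Gamma_0 \trianglelefteq \Gamma\) of finite index; pullback gives \(S_{\bm k}(\Gamma) \hookrightarrow S_{\bm k}(\Gamma_0)\), so it suffices to bound \(\dim S_{\bm k}(\Gamma_0)\). The Eichler--Shimura isomorphism then embeds \(S_{\bm k}(\Gamma_0)\), together with at most \(2^{r_1+r_2}\) isotypic copies arising from holomorphic/antiholomorphic choices at each archimedean place, into the cuspidal part of \(\HH^{r_1+r_2}(\Gamma_0,\scheme W_{\bm\lambda})\) with \(\bm\lambda = \bm k - \bm 2\), which in turn injects into the full group cohomology; thus
\[ \dim S_{\bm k}(\Gamma) \ \ll_F\ \dim \HH^{r_1+r_2}(\Gamma_0,\scheme W_{\bm\lambda}). \]

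After possibly shrinking \(\Gamma_0\) so that the quantitative statement of Theorem~\ref{thm-cohomology-sl2} is available (it remains of finite index in \(\Gamma\), and this is harmless for the upper bound we seek), that theorem gives
\[ \dim \HH^{r_1+r_2}(\Gamma_0,\scheme W_{\bm\lambda}) \ =\ b^{(2)}_{r_1+r_2}(\Gamma_0)\,\dim\scheme W_{\bm\lambda} \ +\ O\!\left(\frac{\dim\scheme W_{\bm\lambda}}{\min_i \lambda_i}\right). \]
Since \(\dim \scheme W_{\bm\lambda}=\prod_i(\lambda_i+1)=\Theta(\prod_i k_i)\) and \(\min_i \lambda_i=\Theta(\min_i k_i)\), it only remains to show that \(b^{(2)}_{r_1+r_2}(\Gamma_0)=0\). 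Abstractly \(\Gamma_0\) is a lattice in \(\SL_2(\bbr)^{r_1}\times\SL_2(\bbc)^{r_2}\), whose associated symmetric space is \((\mathbb{H}^2)^{r_1}\times(\mathbb{H}^3)^{r_2}\); since \(F\) is not totally real we have \(r_2\geq 1\), and \(\mathbb{H}^3\) is \(L^2\)-acyclic (the same input underlying Corollary~\ref{cor-1}). A K\"unneth-type argument for the \(L^2\)-cohomology of symmetric spaces propagates this vanishing to the whole product, so \(b^{(2)}_i(\Gamma_0)=0\) for every \(i\). Substituting yields \(\dim S_{\bm k}(\Gamma) = O\!\left(\prod_i k_i/\min_i k_i\right)\); the imaginary quadratic case (\(r_1=0,\,r_2=1\)) with parallel weight \(\bm k=(k,k)\) then specialises to the stated \(O(k)\) bound.

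The substantive input is Theorem~\ref{thm-cohomology-sl2} itself; granting it, the remaining difficulty is essentially administrative. In particular, the Eichler--Shimura step requires the classical injection from cuspidal into ordinary cohomology (valid also in the non-cocompact setting, up to the combinatorial factor \(2^{r_1+r_2}\)), and the \(\ell^2\)-Betti vanishing must be quoted in a form that applies to non-uniform lattices. Both are standard but need to be cited carefully; no new analytic input is required.
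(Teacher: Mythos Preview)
Your proposal is correct and follows the same three-step skeleton as the paper: Eichler--Shimura to pass from cusp forms to group cohomology, Theorem~\ref{thm-cohomology-sl2} to control that cohomology by an \(\ell^2\)-Betti number plus the stated error, and vanishing of that Betti number because \(r_2>0\). The only differences are cosmetic: the paper uses the Eichler--Shimura map into \(\HH^{r_1+2r_2}_{\mathrm{cusp}}\subset \HH^{r_1+2r_2}_c\) and then Poincar\'e--Lefschetz duality to reach \(\HH_{r_1+r_2}\) (rather than landing directly in degree \(r_1+r_2\)), it invokes Proposition~\ref{prop-0-betti-number} (the transfer argument) instead of shrinking \(\Gamma_0\) a second time, and it cites the proportionality principle and \cite[Thm.~5.12]{luck_l2-invariants_2002} for the \(\ell^2\)-vanishing rather than a K\"unneth argument.
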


Let us describe the structure of the paper and give an outline of the proof of Theorem~\ref{thm-cohomology-sl2}. 

There is a stronger way of formulating both Conjectures~\ref{conjecture-qualitative} and~\ref{conjecture-quantitative} in terms of rank functions for matrices over \(\Gamma\), which are defined in Section~\ref{sec-liemodules}. We will prove Theorem~\ref{thm-cohomology-sl2} by actually proving the equivalent statement for these stronger Conjectures~\ref{lie-luck-conj} and~\ref{lie-luck-conj-quant} in Theorem~\ref{our-theorem-sl2}. We also delegate the rather technical definition of the twisted von Neumann rank to Appendix~\ref{sec-vonNeumann}, where some of its basic properties are also proven.

After they are defined, we explore in Section~\ref{sec-Betti-numbers} how these rank functions can be used to compute \(\ell^2\)-Betti numbers, establishing the fact that Conjecture~\ref{lie-luck-conj} implies Conjecture~\ref{conjecture-qualitative}, and that Conjecture~\ref{lie-luck-conj-quant} implies Conjecture~\ref{conjecture-quantitative}.

Our main tool for attacking Theorem~\ref{thm-cohomology-sl2} is converting its statement to another one about representations of \(p\)-adic analytic pro-\(p\) groups. We dedicate Section~\ref{sec-analytic} to collecting the necessary definitions and results about \(p\)-adic analytic pro-\(p\) groups that will be needed throughout the paper.

This conversion is the objective of Section~\ref{sec-pro-p}, and its result presented in the statement of Corollary~\ref{thm-reduction-padic}. First, it is shown that if \(\Gamma\) virtually satisfies the quantitative Conjecture~\ref{lie-luck-conj-quant}, then it also satisfies the qualitative Conjecture~\ref{lie-luck-conj}. This step is actually the consequence of a general fact about rank functions defined on von Neumann factors, and it is carried out in Appendix~\ref{sec-crossed-products} (see Corollary~\ref{cor-simplified-conjecture}).

The second step is to isolate in \(\Gamma\) the finite index subgroup \(\Gamma'\) for which we are going to prove Conjecture~\ref{lie-luck-conj-quant}. This is done by embedding \(\Gamma\) into a \(p\)-adic analytic group (namely, the \(\bbz_p\)-points \(\schG(\bbz_p)\) of the algebraic group \(\schG\)) and taking the intersection of \(\Gamma\) with a uniform open normal subgroup. Since rank functions are invariant when passing to overgroups, this allows us to focus exclusively on the first congruence subgroup \(U_1\) of \(\schG(\bbz_p)\). Moreover, we also show that it suffices to study the asymptotic behaviour of the ranks of elements in the completed group algebra \(\Qalgebra{U_1}\) instead of general matrices.

So far, all of these facts are proven in complete generality, i.e. for an arbitrary semi-simple algebraic group \(\schG\) over \(\bbc\). It is in Section~\ref{sec-SL2} that we first specialize to the case in Theorem~\ref{thm-cohomology-sl2}, i.e. when all the irreducible factors of \(\schG\) have type \(A_1\). There, using the results of \cite{fu} and \cite{ardakov_irreducible_2013}, we prove Theorem~\ref{our-theorem-sl2}, the stronger version of Theorem~\ref{thm-cohomology-sl2}, by obtaining explicit bounds on the ranks of elements in \(\Qalgebra{U_1}\) for all odd primes \(p\).

The last two sections before the appendix are dedicated to the applications of Theorem~\ref{thm-cohomology-sl2} mentioned in the introduction. In Section~\ref{sec-manifolds} we recall some essential facts about \(3\)-manifold topology that allows us to use Theorem~\ref{thm-cohomology-sl2} in order to compute the \(\ell^2\)-Betti numbers of topologically finite hyperbolic \(3\)-manifolds as in Corollary~\ref{cor-1}. Finally, in Section~\ref{sec-cusp-forms}, we briefly recall the definitions of a cusp form over a number field and the Eichler-Shimura isomorphism, which relates the space of cusp forms to the cohomology of congruence subgroups. We then proceed to show how Theorem~\ref{thm-cohomology-sl2} implies Corollary~\ref{cor-2}.

\section*{Acknowledgments}

The authors are grateful to Andrei Jaikin-Zapirain for introducing them to the problem at the center of the Conjectures~\ref{conjecture-qualitative} and~\ref{conjecture-quantitative}, and for the suggestion of rephrasing this problem in terms of rank functions - as well as providing many useful comments and insights. The authors would also like to thank Oihana Garaialde Ocaña, Jon González-Sánchez, Leo Margolis, Simon Marshall, Joan Porti, Alan Reid, Pablo Sánchez-Peralta and Jan Boschheidgen for helpful discussions. The first author was partially supported by the Basque Government's project IT483-22, the Spanish Government's project PID2020-117281GB-I00 and the University of the Basque Country's predoctoral fellowship PIF19/44. This work was partially supported by the grant PID2020-114032GB-I00/AEI/10.13039/501100011033.

\section{Lie modules and Sylvester functions}\label{sec-liemodules}

The objective of this section is to define the concepts necessary to state a stronger version of Conjectures~\ref{conjecture-qualitative} and~\ref{conjecture-quantitative}: the Sylvester rank and dimension functions associated to a finite dimensional representation of a group. We postpone to Section~\ref{sec-Betti-numbers} the proof that the conjectures stated here are indeed stronger than their respective counterparts in the introduction.

A Sylvester module rank function on a ring \(R\) is a function defined on the class of all finitely presented \(R\)-modules taking values on the non-negative reals that satisfies a series of conditions analogous to that of the usual dimension of vector spaces over a field (cf. Definition~\ref{defn-sylvester-module} below). Analogously, a Sylvester matrix rank function assigns a non-negative real number to each matrix over \(R\) in a fashion similar to the usual rank of matrices over fields (cf. Definition~\ref{defn-sylvester-matrix}). The viewpoint of Sylvester functions not only allows us to explain the asymptotic behaviour of cohomology in a more uniform fashion, but also permit us to relax the hypothesis of \(FP_\infty\) and focus instead on arbitrary finitely generated subgroups - again, not necessarily discrete.

For this section, fix \(\schG\) a semi-simple algebraic group over \(\bbc\) and \(\Gamma\) a finitely generated subgroup of \(\schG\). Henceforth, we assume \(\schG\) to be simply-connected (this will soon be made clear not toimply any loss of generality, see Remark~\ref{rmk-simply-connected}). We simultaneously see \(\schG\) as an algebraic group, a linear group over \(\bbc\) and an affine group scheme defined over \(\bbz\) by the associated Chevalley group, with context or an explicit reference making it clear which structure we are considering. 

Since \(\schG\) is linear over \(\bbc\), by looking at the coefficients of the generators of \(\varphi(\Gamma)\) one can find a subring \(S_0\) of \(\bbc\) with field of fractions \(L_0 \leq \bbc\) such that \(\Gamma\) is a subgroup of the group \(\schG(S_0)\) of \(S_0\)-points of \(\schG\), where the notion of \(S_0\)-points in the linear and algebraic sense coincide by the simply-connected hypothesis \cite[Cor. 3 of Thm. 7]{steinberg_lectures_2016}.

Observe that every rational representation of \(\schG\) over \(\bbc\) is also defined over the prime field, i.e. it is the extension of scalars of a rational representation \(\schG(\bbq) \to \GL_{n}(\bbq)\) - cf. \cite[Cor. 1 to Thm. 7, Thm. 39(e) and the remark thereafter]{steinberg_lectures_2016}. Fix a maximal torus \(\scheme T\) in \(\schG\) and a set of positive roots for its root system \(R\). The irreducible rational representations of \(\schG\) are parameterized by the dominant integral elements \(\bm{\lambda}\) in the character group \(\scheme X(\scheme T)\) by taking the highest weight appearing in the weight decomposition. Moreover, \(\bm{\lambda}\) can be further parameterized by \(n\) non-negative integers \((\lambda_1,\ldots,\lambda_n)\) once an order on the roots is fixed. If \(\Wmodk{}\) is the irreducible \(\schG\)-module associated to \(\bm\lambda\), one can find a \emph{rational form} (ie. a \(\bbq\)-submodule) \(\Wmodk{\bbq}\) in \(\Wmodk{}\) that is equivariant under \(\schG(\bbq)\) and such that \(\Wmodk{\bbq} \otimes_{\bbq} \bbc \simeq \Wmodk{}\). 

The uniqueness of the Chevalley group associated to a given weight lattice \cite[Cor. 5 to Thm. 4']{steinberg_lectures_2016} shows that the \(\schG(\bbq)\)-isomorphism class of \(\Wmodk{\bbq}\) does not depend on the chosen rational form. Hence, for any subfield \(L\) of \(\bbc\) we denote by \(\Wmodk{L}\) the \(\schG(L)\)-module obtained by the extension of scalars of \(\Wmodk{\bbq}\), which is well defined up to a rational isomorphism. In particular, we have that \(\Wmodk{\bbc} = \Wmodk{}\).

For each field \(L\) containing \(L_0\) and each finitely generated right \(L[\Gamma]\)-module \(M\), define \[\dim^\Gamma_{\Wmodk{L}} M = \frac{\dim_L M \otimes_{L[\Gamma]} \Wmodk{L}}{\dim_L \Wmodk{L}}\,.\] If \(L = \bbc\), we denote this function simply by \(\dim^\Gamma_{\Wmodk{}}\).

\begin{defn}\label{defn-sylvester-module} Given a unital ring \(R\), a function \(\dim\) on finitely presented \(R\)-modules valued on the non-negative reals is called a \emph{Sylvester module dimension function} if it satisfies the following properties:
\begin{enumerate}[label=(SMod\arabic*), wide]
	\item \(\dim 0 = 0\) and \(\dim R = 1\);
	\item \(\dim M_1 \oplus M_2 = \dim M_1 + \dim M_2\); and
	\item if \(M_1 \to M_2 \to M_3 \to 0\) is exact, then \[\dim M_3 \leq \dim M_2 \leq \dim M_1 + \dim M_3\,.\]
\end{enumerate}
\end{defn}

The function \(\dim^\Gamma_{\Wmodk{L}}\) satisfies (SMod1)-(SMod3), and is thus a Sylvester module dimension function on \(L[\Gamma]\).

We define the associated matrix rank function analogously. If \(A\) is an \(r\times s\) matrix over \(L[\Gamma]\) and \(M = L[\Gamma]^r/AL[\Gamma]^s\), then \begin{equation}\label{dualityrankdimension}\rk^\Gamma_{\Wmodk{L}} A = r - \dim_{\Wmodk{L}}^\Gamma M\,.\end{equation} Once more, if \(L = \bbc\) we denote this rank function simply by \(\rk^\Gamma_{\Wmodk{}}\).

\begin{defn}\label{defn-sylvester-matrix} A function \(\rk\) on matrices over \(R\) valued on the non-negative real numbers is called a \emph{Sylvester matrix rank function} if it satisfies the following properties:
\begin{enumerate}[label=(SMat\arabic*),wide]
    \item \(\rk A = 0\) if \(A\) is any zero matrix, and \(\rk(1) = 1\);
    \item \(\rk AB \leq \min\{\rk A\,,\,\rk B\}\) for any pair of matrices \(A\) and \(B\) that can be multiplied;
    \item \(\rk\begin{pmatrix} A & 0\\ 0 & B\end{pmatrix} = \rk A + \rk B\) for any matrices \(A\) and \(B\); and
    \item \(\rk\begin{pmatrix} A & C\\ 0 & B\end{pmatrix} \geq \rk A + \rk B\) for any matrices \(A\), \(B\) and \(C\) of appropriate sizes.
\end{enumerate}
\end{defn}

The function \(\rk = \rk^\Gamma_{\Wmodk{L}}\) satisfies  (SMat1)-(SMat4) and is thus a Sylvester matrix rank function on \(L[\Gamma]\).

There is a general relation of the form given by equation~(\ref{dualityrankdimension}) that gives a bijection between the set of Sylvester matrix rank functions and Sylvester module dimension functions over any ring (\cite[Sec. 5]{jaikin-zapirainL2BettiNumbersTheir2019}). Other examples of Sylvester matrix rank functions on \(L[\Gamma]\) are the von Neumann rank \(\rk_{\Gamma}\), the twisted von Neumann rank \(\rk_{\Gamma}^\chi\) of Appendix~\ref{sec-vonNeumann} where \(\chi\colon Z\leq_{\text{finite}} Z(\Gamma) \to \bbc^*\) is any central character and the von Neumann ranks \[\rk_{\Gamma/\Gamma'}^L A = \frac{\rk_{L} A_{\Gamma'}}{|\Gamma:\Gamma'|}\] induced by the finite index normal subgroups \(\Gamma'\) of \(\Gamma\), where \(A_{\Gamma'}\) denotes the image of \(A\) under the natural map \(L[\Gamma] \to L[\Gamma/\Gamma']\).

More generally, if \(\varphi\colon R \to R'\) is a homomorphism of rings and \(\rk\) is a Sylvester matrix rank function on \(R'\), then the pullback along \(\varphi\) defines a Sylvester matrix rank function on \(R\) by \(\varphi^\#\rk(A) = \rk(\varphi(A))\), whose associated Sylvester module dimension function \(\varphi^\#\dim\) is given by \(\varphi^\#\dim M = \dim M \otimes_R R'\). The functions rank functions \(\rk^\Gamma_{\Wmodk{L}}\), \(\rk_{\Gamma}^\chi\) and \(\rk_{\Gamma/\Gamma'}^L\) can be seen as the pullbacks of the rank functions defined on \(\End_L(\Wmodk{L})\), \(\mathcal{U}(\Gamma)_\chi\) and \(L[\Gamma/\Gamma']\) respectively.

If \(N\) is a left \(L[\Gamma]\)-module, one also defines \[\dim_{\Wmodk{L}}^\Gamma N = \frac{\dim_L \Wmodk{L} \otimes_{L[\Gamma]} N}{\dim_L \Wmodk{L}}\] and a direct computation using duals shows that it induces the same matrix rank function. 

\begin{rmk}\label{remark-dim-extensions} For a matrix \(A \in \Mat(L[\Gamma])\) defined over a field \(L \geq L_0\) and any field extension \(L \leq K\), one also has that \(A \in \Mat(K[\Gamma])\). Observe that the rank is independent of the choice of field, i.e. \[\rk_{\Wmodk{K}}^\Gamma A = \rk_{\Wmodk{L}}^{\Gamma} A\,.\] This is because \[\dim^{\Gamma}_{\Wmodk{K}} K \otimes_L M = \dim^{\Gamma}_{\Wmodk{L}} M\,.\] In particular, \(\rk_{\Wmodk{L}}^\Gamma\) is induced by the composition  \(L[\Gamma] \to \End_L(\Wmodk{L}) \to \End_\bbc(\Wmodk{})\).
\end{rmk}

Let us now state a stronger formulation of Conjectures~\ref{conjecture-qualitative} and~\ref{conjecture-quantitative} in terms of Sylvester matrix rank functions. We recall our running hypothesis that \(\schG\) is a semi-simple algebraic group over \(\bbc\) and \(\Gamma\) is a finitely generated subgroup of \(\schG(\bbc)\).

Given a central character \(\chi\colon \scheme Z (\schG) \to \chi^\times\), denote by \(\scheme X(\scheme T)_\chi\) be the subset of the dominant integral elements \(\bm{\lambda} \in \scheme X(\scheme T)\) such that \(\scheme Z (\schG)\) acts on \(\Wmodk{}\) through \(\chi\). We believe that the general qualitative behaviour of the rank functions is described as follows:

\begin{conj}\label{lie-luck-conj} As rank functions on \(\bbc[\Gamma]\), we have that
\[\lim_{\min \lambda_i \to \infty}\rk^\Gamma_{\Wmodk{}} = \rk^\chi_\Gamma\,,\] where the limit is taken over \(\scheme X(\scheme T)_\chi\) and \(\rk^\chi_\Gamma\) is the twisted von Neumann rank of \(\Gamma\) by the central character \(\chi\) restricted to \(\Gamma\).
\end{conj}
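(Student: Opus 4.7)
The plan is to reduce the conjecture to a statement about the completed group algebra of a compact $p$-adic analytic pro-$p$ group, following the roadmap sketched in Section~\ref{sec-pro-p}. First, choose a prime $p$ and extend the base ring $S_0$ so that $\Gamma \leq \schG(S_0)$ embeds into $\schG(\bbz_p)$; finite generation of $\Gamma$ makes this possible. Let $U_1$ be the first congruence subgroup of $\schG(\bbz_p)$, which is a uniform pro-$p$ group, and set $\Gamma' := \Gamma \cap U_1$, a finite-index subgroup of $\Gamma$ contained in $U_1$. Since rank functions are invariant under passing to overgroups of finite index, and since $U_1$ is torsion-free (so the central character issue trivializes upon restriction to $\Gamma'$), it suffices to prove that $\rk^{U_1}_{\Wmodk{}}$ converges to $\rk_{U_1}$ on $\bbc[U_1]$ as $\min \lambda_i \to \infty$.

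Next, both sides extend by continuity to the completed rational group algebra $\Qalgebra{U_1}$: the left-hand side because $\Wmodk{}$ descends to a finite-dimensional $\Qalgebra{U_1}$-module via a natural continuous homomorphism $\Qalgebra{U_1} \to \End(\Wmodk{})$, once the representation is realized over $\bbz_p$ via the Chevalley form discussed in Section~\ref{sec-liemodules}; the right-hand side by standard completion arguments for the von Neumann rank. So the problem is to control, uniformly in $\bm\lambda$, the images of matrices over $\Qalgebra{U_1}$ inside $\End(\Wmodk{})$ and to compare the resulting ranks with $\rk_{U_1}$.

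The hardest step is the estimation of this discrepancy as the weights grow. In the $A_1$ case, the plan is to leverage Ardakov's description of the prime ideal structure of the Iwasawa algebra of $\SL_2$ together with Fu's bounds on the codimensions of the kernels of $\Qalgebra{U_1} \to \End(\Wmodk{})$, which together should yield the quantitative $O(1/\min \lambda_i)$ rate required by the strengthened Conjecture~\ref{conjecture-quantitative}. For a general semi-simple $\schG$, proving the conjecture appears to require analogous control: a sufficiently explicit description of how irreducible highest-weight representations filter the two-sided ideal lattice of the Iwasawa algebra, together with uniform codimension bounds as $\min \lambda_i \to \infty$. I expect this representation-theoretic input, which is essentially a $p$-adic and uniform analogue of classical annihilator calculations in enveloping algebras, to be the main obstacle; the rest of the argument is largely a formal transfer between rank functions on the discrete group and on its pro-$p$ completion.
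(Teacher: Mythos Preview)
Note first that the statement you are addressing is a \emph{conjecture}; the paper does not prove it in general, only in the case where all irreducible factors of \(\schG\) are of type \(A_1\) (Theorem~\ref{our-theorem-sl2}). Your proposal is therefore not a proof but a strategy, and you correctly identify that the general case remains open with the representation-theoretic input on the Iwasawa side as the main obstruction. In broad outline your plan matches the paper's route through Section~\ref{sec-pro-p} and Section~\ref{sec-SL2}: embed \(\Gamma\) into \(\schG(\bbz_p)\), pass to a finite-index subgroup inside \(U_1\), transfer the problem to \(\Qalgebra{U_1}\), and invoke the Ardakov--Fu bounds in the \(A_1\) case.

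There are, however, two genuine gaps in your reduction. First, the sentence ``rank functions are invariant under passing to overgroups of finite index'' hides the hardest formal step. It is \emph{not} automatic that convergence \(\rk^{\Gamma'}_{\Wmodk{}} \to \rk_{\Gamma'}\) on \(\bbc[\Gamma']\) implies \(\rk^{\Gamma}_{\Wmodk{}} \to \rk^\chi_{\Gamma}\) on \(\bbc[\Gamma]\): an arbitrary rank function on a crossed product \(R*(\Gamma/\Gamma')\) is not determined by its restriction to \(R\). The paper handles this in Appendix~\ref{sec-crossed-products} by (i) reducing to Zariski-dense \(\Gamma\) so that \(\Gamma/Z\) is ICC, (ii) proving that \(\mathcal{N}(\Gamma)_\chi\) is then a factor (Lemma~\ref{lem-factors}), (iii) extending the crossed-product decomposition to the \(*\)-regular closure \(\mathcal{R}(\Gamma)_\chi^L\), and (iv) invoking the uniqueness of Sylvester rank functions on a \(*\)-regular ring whose center is a field (Theorem~\ref{thm-first-red}, Corollary~\ref{cor-simplified-conjecture}). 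None of this is routine.

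Second, your identification of the target rank on the pro-\(p\) side is imprecise. You write ``\(\rk_{U_1}\) on \(\bbc[U_1]\)'' and say the von Neumann rank extends ``by standard completion arguments'', but \(U_1\) is a profinite group and there is no von Neumann rank in the discrete sense there. The paper instead compares \(\rk_{\Gamma'}\) to \(\rk_{\Qalgebra{U_1}}\) (the rank coming from the Ore division ring of fractions of \(\Zalgebra{U_1}\)) via L\"uck approximation on the discrete side and Harris's theorem (Theorem~\ref{thm-harris}, Corollary~\ref{cor-Harris-approx}) on the profinite side, matched through the common finite quotients \(\Gamma'/\Gamma'_i \simeq G/(G\cap U_i)\) (Proposition~\ref{prop-transfer-luck-harris}). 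This double approximation is the actual content of the transfer, not a continuity or completion statement.
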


As for the quantitative approach, we expect the following:

\begin{conj}\label{lie-luck-conj-quant} For any \(\bm \lambda \in \scheme X(\scheme T)_\chi\) and any matrix \(A\) over \(\bbc[\Gamma]\), we have that \[\left|\rk^\Gamma_{\Wmodk{}}(A) - \rk^\chi_\Gamma(A)\right| = O\left(\frac{1}{\min\{\lambda_1,\ldots,\lambda_n\}}\right)\,.\]
\end{conj}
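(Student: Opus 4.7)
The plan is to reduce the conjecture, at least for a finite-index subgroup $\Gamma'$ of $\Gamma$, to a statement about completed group algebras of $p$-adic analytic pro-$p$ groups, and then to attack the resulting inequality through the representation theory of Iwasawa algebras. Since $\Gamma \leq \schG(\bbc)$ is finitely generated, the matrix coefficients of a generating set lie in a finitely generated subring $S_0 \subset \bbc$, so that for an appropriate prime $p$ one may embed $S_0 \hookrightarrow \bbz_p$, giving $\Gamma \leq \schG(\bbz_p)$. Remark~\ref{remark-dim-extensions} ensures that the rank is insensitive to the passage from $\bbc$ to any field containing $L_0$, in particular to $\bbq_p$, so nothing is lost in this reduction.

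I would then intersect $\Gamma$ with the first principal congruence subgroup $U_1 \leq \schG(\bbz_p)$ to obtain a finite-index subgroup $\Gamma' = \Gamma \cap U_1$. The subgroup $U_1$ is uniform pro-$p$, and therefore has a well-behaved completed group algebra $\Qalgebra{U_1}$. Because the rank functions involved extend compatibly between a group and its overgroups, it suffices to bound $|\rk^{U_1}_{\Wmodk{}}(A) - \rk^{\chi}_{U_1}(A)|$ uniformly for $A \in \bbc[U_1]$, and then by a density/continuity argument for $A$ in the Iwasawa algebra $\Qalgebra{U_1}$ itself. This reduction is essentially the content promised by Corollary~\ref{thm-reduction-padic}.

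The crux is then to analyze the analytic extension $\Qalgebra{U_1} \to \End_\bbc(\Wmodk{})$ arising from the rational representation. Since $\Wmodk{}$ is irreducible, this map is surjective and its kernel is a primitive ideal of the Iwasawa algebra. The heuristic is that, as $\min \lambda_i \to \infty$, these primitive ideals exhaust $\Qalgebra{U_1}$ in a controlled fashion, so that the induced rank functions converge to the twisted von Neumann rank $\rk^{\chi}_{U_1}$. Quantifying this exhaustion with error $O(1/\min \lambda_i)$ is where concrete input is required: in the $A_1$-case one can invoke Ardakov's description of the primitive spectrum of $\Qalgebra{U_1}$, combined with Fu's dimension estimates, to obtain the bound explicitly.

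The principal obstacle is precisely this last step. Beyond the rank-one case, establishing an effective rate of convergence would require a complete classification of the primitive ideals of $\Qalgebra{U_1}$ for higher-rank semi-simple groups, together with a Peter--Weyl type estimate with explicit constants controlling how irreducible representations approximate the regular representation; neither of these is currently available in the required form. In fact, even the qualitative Conjecture~\ref{lie-luck-conj} is open at this level of generality. This is why, within the paper, the conjecture can only be settled when every irreducible factor of $\schG$ has type $A_1$, and even then the quantitative statement is proved only for a finite-index subgroup of $\Gamma$.
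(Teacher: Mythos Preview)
Your outline is essentially the paper's own strategy: the statement is a conjecture, not a theorem, and the paper only establishes it (virtually) when every simple factor of $\schG$ has type $A_1$, exactly as you say. The reduction to $\Qalgebra{U_1}$ via Cassels embedding, passage to a finite-index subgroup inside $U_1$, and the invocation of Fu's estimates together with Ardakov--Wadsley in the $A_1$ case all match Sections~\ref{sec-pro-p}--\ref{sec-SL2}.

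Two points where your sketch diverges from the paper's actual mechanics. First, the passage from matrices over $\Qalgebra{U_1}$ to single elements is not a density or continuity argument: it is Lemma~\ref{lem:approxim-ore-domains}, which uses the Ore domain structure to bound $|\rk A - \rk_R A|$ for any matrix $A$ by a finite sum $\sum_i |\rk(x_i) - \rk_R(x_i)|$ over ring elements $x_i$. Second, the limiting rank on $U_1$ is not identified directly as a twisted von Neumann rank $\rk^\chi_{U_1}$; rather, the paper compares $\rk^{U_1}_{\Wmodk{\bbq_p}}$ with $\rk_{\Qalgebra{U_1}}$ (the rank coming from the Ore division ring $\mathcal{Q}_{U_1}$), and separately shows $\rk_{\Qalgebra{U_1}} = \rk_{\Gamma'}$ via L\"uck approximation combined with Harris' theorem (Proposition~\ref{prop-transfer-luck-harris}). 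Your heuristic about primitive ideals ``exhausting'' $\Qalgebra{U_1}$ is reasonable intuition but not how the argument actually runs: Fu's bound is a direct dimension estimate on $\Hom_{\widehat{U(\ffg)}}(M,\Wmodk{\bbq_p})$ for $M = \widehat{U(\ffg)}/x\widehat{U(\ffg)}$, with no appeal to the primitive spectrum as such.
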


\begin{rmk}\label{rmk-simply-connected} Let \(\scheme H\) be a semi-simple algebraic group over \(\bbc\) and \(\schG\) be the simply connected algebraic group with the same root system of \(\scheme H\). Then, if Conjecture~\ref{lie-luck-conj} holds for any finitely generated subgroup of \(\schG\), it also does for \(\scheme H\). Indeed, any matrix over \(\scheme H\) can be lifted to a matrix over \(\schG\), and one readily checks that both the ranks induced by a representation \(\scheme W\) and the twisted von Neumann ranks over \(\schG\) and over \(\scheme H\) coincide for this pair of matrices. The same is true for Conjecture~\ref{lie-luck-conj-quant}: if it holds (virtually) for all finitely generated subgroups of \(\schG\), it also holds (virtually) for all finitely generated subgroups of \(\scheme H\). Hence, it suffices to prove these conjectures for the unique simply connected semi-simple algebraic group of any given root system.
\end{rmk}

\section{\texorpdfstring{\(\ell^2\)}{l²}-Betti numbers}\label{sec-Betti-numbers}

Associated to any abstract group \(\Gamma\), we have a sequence of numerical invariants \(b_i^{(2)}(\Gamma)\) called the \emph{\(\ell^2\)-Betti numbers} of \(\Gamma\). They are defined as the von Neumann dimension of the \(\ell^2\)-homology groups \(\HH_i^{(2)}(\Gamma) = \HH_i(\Gamma,\mathcal{N}(\Gamma))\), where \(\mathcal{N}(\Gamma)\) is the von Neumann algebra generated by the action of \(\Gamma\) on \(\ell^2(\Gamma)\) \cite[Sec. 6.5]{luck_l2-invariants_2002}. If \(\Gamma\) is of type \(FP_{n+1}\) over \(\bbc\), the \(n\) first \(\ell^2(\Gamma)\)-Betti numbers of \(\Gamma\) can also be recovered from the complex \(F_\bullet \otimes_{\bbc[\Gamma]} \ell^2(\Gamma)\), where \(F_\bullet \to \bbc\) is a free resolution of \(\bbc[\Gamma]\) with \(F_i\) of finite rank for \(0 \leq i \leq n+1\), by \cite[Lem. 6.53]{luck_l2-invariants_2002}. By taking instead the twisted von Neumann dimension \(\dim^\chi_\Gamma\) of \(\HH_i(\Gamma,\mathcal{N}(\Gamma)_\chi)\) as in Appendix~\ref{sec-vonNeumann}, one obtains the \(i\)-th \(\ell^2\)-Betti number of \(\Gamma\) twisted by a central character \(\chi\), and these all coincide with the usual \(\ell^2\)-Betti numbers if \(\Gamma\) is torsion-free.

When \(\Gamma = \pi_1(M)\) is the fundamental group of an orientable Riemannian manifold, \(b_i^{(2)}(\Gamma)\) is related to the trace of the heat kernel of \(M\) \cite[Sec. 1.3.2]{luck_l2-invariants_2002}. These numbers vanish if \(\Gamma\) is locally infinite amenable \cite[Thm. 7.2]{luck_l2-invariants_2002}, and the vanishing of \(\beta_1^{(2)}(\Gamma)\) is related to fibering in virtually residually finite rationally solvable (RFRS) groups \cite{kielak_residually_2020}, a class that includes all virtually compact special groups and hence the fundamental groups of finite volume hyperbolic \(3\)-manifolds.

The main objective of this section is to show how one can compute \(\ell^2\)-Betti numbers using the von Neumann rank for matrices, and hence to prove that the conjectures of Section~\ref{sec-liemodules} on Sylvester matrix rank functions imply their respective cohomological counterparts of the introduction.

A consequence of the (twisted) Lück's approximation theorem is that, for any finite-dimensional \(\bbc[\Gamma]\)-module \(\scheme W\) and any residual chain \(\{\Gamma_i\}\) of finite index normal subgroups of \(\Gamma\), the \(\ell^2\)-Betti number \(\beta_i^{(2)}(\Gamma)\) can be recovered as the homological gradient \[\beta_i^{(2)}(\Gamma)\cdot \dim_{\bbc} \scheme W = \lim_{i \to \infty} \frac{\dim_\bbc \HH_i(\Gamma_i, \scheme W)}{|\Gamma: \Gamma_i|}\,,\] as long as \(\Gamma\) is of type \(FP_{i+1}\) over \(\bbc\) \cite[Thm. 13.49]{luck_l2-invariants_2002} (see also \cite{boschheidgenTwistedL2Betti}). If \(\Gamma\) is a subgroup of some \(\schG\) satisfying  Conjecture~\ref{lie-luck-conj}, then the \(\bbc[\Gamma]\)-modules \(\Wmodk{}\) provide another way to approximate the \(\ell^2\)-Betti numbers of \(\Gamma\).

\begin{prop}\label{prop-homology-approx} Assume that \(\Gamma \leq \schG\) is a subgroup of type \(FP_{n+1}\) over \(\bbc\) that satisfies Conjecture~\ref{lie-luck-conj}. If \(\Gamma\) intersects \(\scheme{Z}(\schG)\) trivially, then for any sequence \(\{\scheme W_k\}\) of irreducible representations of \(\schG\) such that \(\min\{\lambda_1(k),\ldots,\lambda_n(k)\} \to \infty\) we have that \[\lim_{k \to \infty} \frac{\dim_\bbc \HH_i(\Gamma, \scheme W_k)}{\dim_\bbc \scheme W_k} = \beta_i^{(2)}(\Gamma)\] whenever \(0 \leq i \leq n\). In particular, \(\Gamma\) also satisfies Conjecture~\ref{conjecture-qualitative}.
\end{prop}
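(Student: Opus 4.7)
The plan is to reduce the computation of each normalized homology dimension to a finite combination of the rank functions \(\rk^\Gamma_{\scheme W_k}\) evaluated on a few fixed matrices over \(\bbc[\Gamma]\), and then pass to the limit using Conjecture~\ref{lie-luck-conj}. Since \(\Gamma\) is of type \(FP_{n+1}\) over \(\bbc\), I would pick a free resolution \(F_\bullet \to \bbc\) with \(F_j = \bbc[\Gamma]^{r_j}\) of finite rank for \(j \leq n+1\), and denote by \(A_j \in \Mat(\bbc[\Gamma])\) the matrix of \(\partial_j\). Tensoring with \(\scheme W = \scheme W_k\) turns the complex into \((\scheme W^{r_j}, A_j)\), and the defining property \(\dim_\bbc \Image A_j = \rk^\Gamma_{\scheme W}(A_j)\cdot \dim_\bbc \scheme W\) of the rank function gives, for \(0 \leq j \leq n\),
\[
\frac{\dim_\bbc \HH_j(\Gamma, \scheme W)}{\dim_\bbc \scheme W} \;=\; r_j - \rk^\Gamma_{\scheme W}(A_j) - \rk^\Gamma_{\scheme W}(A_{j+1})\,.
\]

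To apply Conjecture~\ref{lie-luck-conj}, I need to fix a central character on each relevant subsequence. Because \(\schG\) is semi-simple, \(\scheme Z(\schG)\) is a finite subscheme, so only finitely many central characters \(\chi\) occur among the \(\scheme W_k\); I would therefore partition \(\{\scheme W_k\}\) accordingly, each piece lying in some \(\scheme X(\scheme T)_\chi\) and still satisfying \(\min \lambda_j(k) \to \infty\). The hypothesis \(\Gamma \cap \scheme Z(\schG) = \{1\}\) forces every such \(\chi\) to restrict trivially to \(\Gamma\), so \(\rk^\chi_\Gamma\) collapses to the untwisted von Neumann rank \(\rk_\Gamma\). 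A pointwise application of Conjecture~\ref{lie-luck-conj} to the two matrices \(A_j\) and \(A_{j+1}\) then yields
\[
\lim_{k\to\infty}\frac{\dim_\bbc \HH_j(\Gamma, \scheme W_k)}{\dim_\bbc \scheme W_k} \;=\; r_j - \rk_\Gamma(A_j) - \rk_\Gamma(A_{j+1})\,,
\]
and invoking the analogous identity \(\beta_j^{(2)}(\Gamma) = r_j - \rk_\Gamma(A_j) - \rk_\Gamma(A_{j+1})\), obtained by tensoring the same resolution with \(\mathcal{N}(\Gamma)\) (cf. \cite[Lem. 6.53]{luck_l2-invariants_2002}), closes the argument for homology.

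For the ``In particular'' clause, I would pass from homology to cohomology via contragredients: the cochain complex \(\Hom_{\bbc[\Gamma]}(F_\bullet, \scheme W_k)\) is the \(\bbc\)-linear dual of the chain complex \(F_\bullet \otimes_{\bbc[\Gamma]} \scheme W_k^\vee\), so \(\dim_\bbc \HH^j(\Gamma, \scheme W_k) = \dim_\bbc \HH_j(\Gamma, \scheme W_k^\vee)\); since \(\scheme W_k^\vee\) has the same dimension and its highest-weight parameters are simply permuted by \(-w_0\) (hence still tend to infinity), applying the homology statement to \(\{\scheme W_k^\vee\}\) recovers Conjecture~\ref{conjecture-qualitative}. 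I do not expect a genuine obstacle here: the only non-formal input is Conjecture~\ref{lie-luck-conj} itself, used on the finitely many matrices coming from the finite-type resolution, while the central-character bookkeeping is trivialized by the finiteness of \(\scheme Z(\schG)\) and the hypothesis \(\Gamma \cap \scheme Z(\schG) = \{1\}\).
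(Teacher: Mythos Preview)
Your proposal is correct and follows essentially the same route as the paper: express the normalized homology dimension as \(r_j - \rk^\Gamma_{\scheme W}(A_j) - \rk^\Gamma_{\scheme W}(A_{j+1})\) using a finite-type free resolution, apply Conjecture~\ref{lie-luck-conj} to the two fixed matrices, identify the limit with \(\beta_j^{(2)}(\Gamma)\) via the same rank formula for \(\rk_\Gamma\), and pass to cohomology by duality. Your treatment is in fact slightly more explicit than the paper's on two points it leaves implicit: the need to partition \(\{\scheme W_k\}\) into finitely many subsequences of constant central character before invoking the conjecture, and the observation that \(\scheme W_k^\vee\) has highest weight \(-w_0\bm\lambda\), so the minimum parameter still diverges.
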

\begin{proof} Let \(F_\bullet \to \bbc\) be a free resolution of \(\bbc\) over \(\bbc[\Gamma]\) such that \(F_i\) is of finite rank for every \(0 \leq i \leq k+1\). For every \(1 \leq i \leq k\), there exists a \(s\times r\) matrix \(A\) and a \(r \times t\) matrix \(B\) over \(\bbc[\Gamma]\) such that, after choosing bases for \(F_{i-1}\), \(F_i\) and \(F_{i+1}\), the maps \(\psi\) and \(\varphi\) in
\[\cdots \to F_{i+1} \overset{\psi}{\to} F_i \overset{\varphi}{\to} F_{i-1} \to \cdots\] are given by right multiplication with \(A\) and \(B\), respectively.

After tensoring with \(\ell^2(\Gamma)\) over \(\bbc[\Gamma]\), we get a complex of the form \[\cdots \to \ell^2(\Gamma)^s \overset{\widetilde{\psi}}{\to} \ell^2(\Gamma)^r \overset{\widetilde{\varphi}}{\to} \ell^2(\Gamma)^t \to \cdots\,.\]
The kernel of \(\widetilde{\varphi}\) is a closed \(\Gamma\)-invariant subspace of \(\ell^2(\Gamma)\) which contains the image of \(\widetilde{\psi}\). The \(i\)-th \(\ell^2\)-Betti number of \(\Gamma\) is then defined as \[\beta_i^{(2)}(\Gamma) = \dim_\Gamma \ker(\widetilde{\varphi})/\overline{\Image(\widetilde{\psi})}\,. = \dim\ker(\widetilde{\varphi}) - \dim\overline{\Image(\widetilde{\psi})}\,,\] where the last equality follows from the fact that \(\dim_\Gamma\) is exact under short exact sequences \cite[Thm. 6.7(4b)]{luck_l2-invariants_2002}.

Using that \(\dim_\Gamma \overline{\Image(\widetilde{\psi})} = \rk_\Gamma A\) and \(\dim_\Gamma \ker(\widetilde{\varphi}) = r - \rk_\Gamma B\), we obtain that \[\beta_i^{(2)}(\Gamma) = r - \rk_\Gamma A - \rk_\Gamma B\,.\]

On the other hand, tensoring with \(\Wmodk{}\) over \(\bbc[\Gamma]\) gives us the complex \[\cdots \to \Wmodk{s} \overset{\widehat{\psi}}{\to} \Wmodk{r} \overset{\widehat{\varphi}}{\to} \Wmodk{t} \to \cdots\,,\] and \(\HH_i(\Gamma,\Wmodk{}) = \Tor_i^{\bbc[\Gamma]}(\bbc,\Wmodk{})\) is isomorphic to \(\ker(\widehat{\varphi})/\Image(\widehat{\psi})\). From this, we get that \[\dim_\bbc H_i(\Gamma,\Wmodk{}) = \dim_\bbc \ker(\widehat{\varphi})/\Image(\widehat{\psi}) = \dim_\bbc \ker(\widehat{\varphi}) - \dim_\bbc \Image(\widehat{\psi})\,.\]
Since \(\dim_\bbc \Image(\widehat{\psi}) = \dim_\bbc \Wmodk{} \cdot \rk^\Gamma_{\Wmodk{}} A\) and \(\dim_\bbc \ker(\widehat{\varphi}) = \dim_\bbc \Wmodk{} \cdot (r - \rk^\Gamma_{\Wmodk{}} B)\), we conclude that \[\dim_\bbc \HH_i(\Gamma,\Wmodk{}) = \dim_\bbc \Wmodk{}\cdot(r - \rk^\Gamma_{\Wmodk{}} A - \rk^\Gamma_{\Wmodk{}} B)\,.\] The result now follows from the validity of Conjecture~\ref{lie-luck-conj} and  Remark~\ref{rmk-twisted-torsion-free}. The statement about cohomology follows by considering duals over \(\bbc\).
\end{proof}

\begin{prop} Assume now that \(\Gamma \leq \schG\) is a subgroup of type \(FP_{n+1}\) over \(\bbc\) that satisfies Conjecture~\ref{lie-luck-conj-quant}. If \(\Gamma\) intersects \(\scheme Z(\schG)\) trivially, then for any representation \(\scheme W\) of \(\schG\) with highest weight \(\bm\lambda = (\lambda_1,\ldots,\lambda_n)\) we have that \[\left|\frac{\dim_\bbc \HH_i(\Gamma,\scheme W)}{\dim_\bbc \scheme W} -  b_i^{(2)}(\Gamma)\right| = O\left(\frac{1}{\min\{\lambda_1,\ldots,\lambda_n\}}\right)\] for any \(0 \leq i \leq n\). In particular, \(\Gamma\) also satisfies Conjecture~\ref{conjecture-quantitative}.
\end{prop}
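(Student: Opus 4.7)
The plan is to repeat the argument of Proposition~\ref{prop-homology-approx} verbatim, but upgrade the final step from a convergence statement to the quantitative control provided by Conjecture~\ref{lie-luck-conj-quant}. First, I would fix $0 \leq i \leq n$ and a free resolution $F_\bullet \to \bbc$ of $\bbc$ over $\bbc[\Gamma]$ with $F_j$ finitely generated for $0 \leq j \leq n+1$ (available by the $FP_{n+1}$ hypothesis). After choosing bases for $F_{i-1}, F_i, F_{i+1}$, encode the differentials $F_{i+1} \to F_i$ and $F_i \to F_{i-1}$ as right multiplication by fixed matrices $A$ and $B$ over $\bbc[\Gamma]$ of sizes $s\times r$ and $r\times t$, respectively.

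The computations of Proposition~\ref{prop-homology-approx} then yield the two identities
\[b_i^{(2)}(\Gamma) = r - \rk_\Gamma A - \rk_\Gamma B \qquad\text{and}\qquad \frac{\dim_\bbc \HH_i(\Gamma, \scheme W)}{\dim_\bbc \scheme W} = r - \rk^\Gamma_{\scheme W} A - \rk^\Gamma_{\scheme W} B,\]
so that subtracting gives
\[\frac{\dim_\bbc \HH_i(\Gamma, \scheme W)}{\dim_\bbc \scheme W} - b_i^{(2)}(\Gamma) = \bigl(\rk_\Gamma A - \rk^\Gamma_{\scheme W} A\bigr) + \bigl(\rk_\Gamma B - \rk^\Gamma_{\scheme W} B\bigr).\]

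Since $\Gamma \cap \scheme Z(\schG) = \{1\}$, the central character $\chi$ of $\scheme W$ restricts trivially to $\Gamma$, and hence $\rk^\chi_\Gamma$ coincides with the ordinary von Neumann rank $\rk_\Gamma$ on $\bbc[\Gamma]$ (cf. Remark~\ref{rmk-twisted-torsion-free}). Applying Conjecture~\ref{lie-luck-conj-quant} separately to the two fixed matrices $A$ and $B$ produces
\[\bigl|\rk_\Gamma A - \rk^\Gamma_{\scheme W} A\bigr|,\ \bigl|\rk_\Gamma B - \rk^\Gamma_{\scheme W} B\bigr| = O\!\left(\frac{1}{\min\{\lambda_1,\ldots,\lambda_n\}}\right),\]
and the desired estimate follows from the triangle inequality. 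The cohomological version is obtained by passing to $\bbc$-duals, exactly as at the end of the proof of Proposition~\ref{prop-homology-approx}.

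There is no genuine obstacle here: the statement is a direct quantitative refinement of the previous proposition, and the strategy carries over verbatim once Conjecture~\ref{lie-luck-conj-quant} is invoked in place of the qualitative Conjecture~\ref{lie-luck-conj}. The only point worth flagging is that the implicit constants in the $O$-notation depend on the chosen resolution (and hence on $\Gamma$ and $i$) but are independent of $\scheme W$, which is precisely what the conclusion requires.
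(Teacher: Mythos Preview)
Your proposal is correct and is essentially identical to the paper's proof: both reuse the identities from Proposition~\ref{prop-homology-approx} to express the difference as $(\rk_\Gamma A - \rk^\Gamma_{\scheme W} A) + (\rk_\Gamma B - \rk^\Gamma_{\scheme W} B)$, then apply the triangle inequality, Conjecture~\ref{lie-luck-conj-quant}, Remark~\ref{rmk-twisted-torsion-free}, and duality. Your remark about the implicit constants depending on the resolution but not on $\scheme W$ is a nice clarification that the paper leaves implicit.
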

\begin{proof}During the proof of Proposition~\ref{prop-homology-approx}, we show that, under these hypotheses, we have an upper bound
\[\left|\frac{\dim_{\bbc} \HH_i(\Gamma,\Wmodk{})}{\dim_{\bbc} \Wmodk{}} - \beta_i^{(2)}(\Gamma)\right| \leq \left|\rk^\Gamma_{\Wmodk{}}(A) - \rk_\Gamma(A)\right| + \left|\rk^\Gamma_{\Wmodk{}}(B) - \rk_\Gamma(B)\right|\] where \(A\) and \(B\) are matrices over \(\bbc[\Gamma]\). So, once again, the result follows from the validity of Conjecture~\ref{lie-luck-conj-quant}, Remark~\ref{rmk-twisted-torsion-free} and duality for passing to cohomology.
\end{proof}

It is important to note that in many cases, satisfying Conjecture~\ref{conjecture-quantitative} only virtually is enough to obtain bounds on cohomology. We do it in the following proposition:

\begin{prop}\label{prop-0-betti-number} Assume \(\Gamma \leq \schG\) is a torsion-free subgroup of type \(FP_{n+1}\) that virtually satisfies Conjecture~\ref{lie-luck-conj-quant}. Then, if \(\beta_i^{(2)}(\Gamma) = 0\) for some \(0\leq i \leq n\), we have that
\[\frac{\dim_\bbc \HH^i(\Gamma, \scheme W)}{\dim_\bbc \scheme W} = O\left(\frac{1}{\min\{\lambda_1,\ldots,\lambda_n\}}\right)\,.\]
\end{prop}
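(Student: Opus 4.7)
The plan is to reduce to the previous proposition by passing to the finite index subgroup $\Gamma' \leq \Gamma$ that actually satisfies Conjecture~\ref{lie-luck-conj-quant}, apply that proposition there, and then transfer the bound back to $\Gamma$ via the injectivity of restriction on cohomology in characteristic zero.

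First, I would verify that $\Gamma'$ inherits all the hypotheses of the previous proposition. As a finite index subgroup of $\Gamma$, it is again of type $FP_{n+1}$ over $\bbc$ and torsion-free. Since $\scheme{Z}(\schG)$ is finite for a semi-simple algebraic group, the torsion-freeness of $\Gamma'$ forces $\Gamma' \cap \scheme{Z}(\schG) = \{e\}$. Moreover, by the multiplicativity of $\ell^2$-Betti numbers for finite index subgroups of torsion-free groups,
\[
\beta_i^{(2)}(\Gamma') = [\Gamma:\Gamma']\cdot \beta_i^{(2)}(\Gamma) = 0.
\]
The previous proposition applied to $\Gamma'$ and $\scheme W$ then yields
\[
\frac{\dim_\bbc \HH^i(\Gamma',\scheme W)}{\dim_\bbc \scheme W} = O\!\left(\frac{1}{\min\{\lambda_1,\ldots,\lambda_n\}}\right).
\]

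Next, I would relate $\HH^i(\Gamma,\scheme W)$ to $\HH^i(\Gamma',\scheme W)$. The composition $\cores \circ \res$ on $\HH^i(\Gamma,\scheme W)$ is multiplication by $[\Gamma:\Gamma']$, which is invertible in $\bbc$; hence the restriction map $\res \colon \HH^i(\Gamma,\scheme W) \to \HH^i(\Gamma',\scheme W)$ is injective, and
\[
\dim_\bbc \HH^i(\Gamma,\scheme W) \leq \dim_\bbc \HH^i(\Gamma',\scheme W).
\]
Dividing by $\dim_\bbc \scheme W$ and combining with the previous bound gives exactly the desired asymptotic estimate.

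The argument is essentially routine assembly; the only point requiring mild care is ensuring all finiteness/torsion-free hypotheses descend to $\Gamma'$ and that $\beta_i^{(2)}$ scales correctly under finite index. Neither should be an obstacle given that $\Gamma$ is torsion-free and $\scheme{Z}(\schG)$ is finite. No deeper estimate is needed beyond the one already established for $\Gamma'$.
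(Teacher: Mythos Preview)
Your proposal is correct and follows essentially the same approach as the paper: pass to the finite index subgroup \(\Gamma'\) satisfying the quantitative conjecture, use multiplicativity of \(\ell^2\)-Betti numbers to get \(\beta_i^{(2)}(\Gamma')=0\), and then transfer the resulting bound back to \(\Gamma\) via the fact that \(\cores\circ\res\) (respectively, corestriction composed with transfer in homology) is multiplication by the index. The only cosmetic difference is that the paper phrases the transfer argument in homology and passes to cohomology by duality, whereas you work directly with \(\res\) and \(\cores\) in cohomology; the content is the same.
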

\begin{proof} Let \(\Gamma'\) be a finite index subgroup of \(\Gamma\) that satisfies Conjecture~\ref{lie-luck-conj-quant}. Let \(\alpha\colon \HH_i(\Gamma', \scheme W) \to \HH_i(\Gamma, \scheme W)\) and \(\beta\colon \HH_i(\Gamma, \scheme W) \to \HH_i(\Gamma', \scheme W)\) be the corestriction and transfer maps in homology, respectively. Since \(\alpha\circ\beta\) equals multiplication by the index \(|\Gamma\colon\Gamma'|\) in \(\HH_i(\Gamma,\scheme W)\), it is an isomorphism and therefore \(\beta\) must be injective, see \cite[Prop. 9.5(ii)]{brown_cohomology_1982}. Consequently, \[\dim_{\bbc} \HH_i(\Gamma,\scheme W) \leq \dim_{\bbc} \HH_i(\Gamma',\scheme W)\] and, since \(\beta_i^{(2)}(\Gamma') =| \Gamma\colon \Gamma'|\beta_i^{(2)}(\Gamma) = 0\), the claim follows.
\end{proof}

\section{\texorpdfstring{\(p\)}{p}-adic analytic groups}\label{sec-analytic}

Our main tool for attacking Conjecture~\ref{lie-luck-conj-quant} is translating it into a statement about \(p\)-adic universal enveloping algebras. This is done in Section~\ref{sec-pro-p} by exploiting the rational structure of \(\schG\) and embedding the finitely generated subfield \(S_0 \leq \bbc\) into a \(p\)-adic field. By taking the closure with respect to the \(p\)-adic topology, we then shift the focus to asymptotic properties of \(p\)-adic analytic groups. Hence, this section is devoted to collecting the definitions, properties and theorems about \(p\)-adic analytic groups that will be needed for the rest of the paper.

Let \(G\) be a profinite group and \(p\) be and odd prime number. The completed group algebra of $G$, also referred to as the \emph{Iwasawa algebra} in the literature, is \[\Zalgebra{G} = \varprojlim \bbz_p[G/U]\,,\] where the inverse limit is taken over all the open normal subgroups \(U\) of \(G\).
For any pair of right and left profinite \(\Zalgebra{G}\)-modules \(M\) and \(N\), we let \[M\otimes_{\Zalgebra{G}} N\quad\text{and}\quad M\cotimes_{\Zalgebra{G}} N\] be the usual and the completed tensor product over the ring \(\Zalgebra{G}\), respectively (see \cite[Sec. 5.5]{ribes_profinite_2010}). Both have the structure of a \(\bbz_p\)-module, with the latter being the completion of the former with respect to the topology defined by the finite quotients of \(M\) and \(N\). We note that, if either \(M\) or \(N\) is finitely generated, then \(M\cotimes_{\Zalgebra{G}} N \simeq M\otimes_{\Zalgebra{G}} N\) by \cite[Lem. 2.1(i)]{brumer_pseudocompact_1966}.

A profinite group \(G\) is called \emph{\(p\)-adic analytic} if it is a Lie group over \(\bbq_p\), 
see \cite[Interlude A]{dixon_analytic_1999} for the many equivalent definitions. We say that \(G\) has dimension \(d\) if it has dimension \(d\) as an analytic manifold; the integer $d$ can also be recovered as the dimension over \(\bbq_p\) of its Lie algebra and as the minimal number of topological generators for any open uniform subgroup \(U\) of \(G\).

Let \(\Qalgebra{G} = \Zalgebra{G} \otimes_{\bbz_p} \bbq_p\). Opposite to what the notation might suggest, the algebra \(\Qalgebra{G}\) is \emph{not} in general isomorphic to the inverse limit \(\varprojlim \bbq_p[G/U]\). The completed group algebra \(\Zalgebra{G}\) is known to be an Ore domain if \(G\) is a torsion-free \(p\)-adic analytic profinite (and hence pro-\(p\)) group (see \cite[Sec. 4]{ardakov_ring-theoretic_2006}). 
Let \(\mathcal{Q}_{G}\) be the associated division ring of fractions. For any left \(\Zalgebra{G}\)-module \(M\) and any right \(\Zalgebra{G}\)-module \(N\), we define \[\dim_{\Zalgebra{G}} M = \dim_{\mathcal{Q}_G} \mathcal{Q}_G \otimes_{\Zalgebra{G}} M\quad\text{and}\quad \dim_{\Zalgebra{G}} N = \dim_{\mathcal{Q}_G} N\otimes_{\Zalgebra{G}} \mathcal{Q}_G\,.\]
This is a Sylvester module dimension function on \(\Zalgebra{G}\) whose associated matrix rank function is \(\rk_{\Zalgebra{G}} = \rk_{\mathcal{Q}_G}\). Observe that \(\mathcal{Q}_G\) also contains \(\Qalgebra{G}\), and therefore induces a matrix rank function \(\rk_{\Qalgebra{G}}\) on \(\Qalgebra{G}\) with associated dimension function \(\dim_{\Qalgebra{G}}\).

For any \(\bbz_p\)-module \(M\), we define \(\dim_{\bbz_p} M\) as \(\dim_{\bbq_p} \bbq_p \otimes_{\bbq_p} M\). The following result is an analogue of the Lück approximation theorem for analytic pro-\(p\) groups.

\begin{thm}[{\cite[Thm. 1.10]{harris_span_1979}, cf. \cite[Thm. 2.1]{bergeron_growth_2014}}]\label{thm-harris} Let \(G \leq \GL_n(\bbz_p)\) be a torsion-free \(p\)-adic analytic group of dimension \(d\). Let \(U_i\) be the intersection of \(G\) with the \(i\)-th congruence subgroup of \(\GL_n(\bbz_p)\). If \(M\) is a finitely generated \(\Zalgebra{G}\)-module, then \[\dim_{\bbz_p} (\bbz_p \otimes_{\Zalgebra{G}} M) = \dim_{\Zalgebra{G}}(M)\cdot |G: U_i| + O(|G: U_i|^{1-1/d})\,.\] In particular, we have that \[\lim_{i \to \infty} \frac{\dim_{\bbz_p} (\bbz_p \otimes_{\Zalgebra{G}} M)}{|G: U_i|} = \dim_{\Zalgebra{G}} M\,.\]
\end{thm}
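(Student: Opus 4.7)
The plan is to convert the theorem into a statement about $\bbq_p$-ranks of matrices over the finite group algebras $\bbq_p[G/U_i]$ and then exploit the polynomial growth structure of the Iwasawa algebra of a uniform pro-$p$ subgroup. After passing to a sufficiently deep open uniform subgroup (which only multiplies both sides by the common constant $|G:U_{i_0}|$ and rescales the error term), we may assume $G$ itself is uniform of dimension $d$. Given a finite presentation $\Zalgebra{G}^s \xrightarrow{A} \Zalgebra{G}^r \to M \to 0$, applying the right exact functor $\bbz_p \cotimes_{\Zalgebra{U_i}} -$ and inverting $p$ yields an exact sequence of finite-dimensional $\bbq_p$-vector spaces. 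Using the identity $\dim_{\Zalgebra{G}} M = r - \rk_{\mathcal{Q}_G}(A)$, the theorem reduces to the matrix estimate
\[\rk_{\bbq_p}(\bar A) \;=\; \rk_{\mathcal{Q}_G}(A)\cdot|G:U_i| \;+\; O\bigl(|G:U_i|^{1-1/d}\bigr),\]
where $\bar A$ denotes the image of $A$ in $\Mat(\bbq_p[G/U_i])$.

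By the additivity and blockwise behaviour of the Sylvester rank functions involved, a further reduction brings this to the case of a single nonzero element $a \in \Zalgebra{G}$, i.e.\ to the cyclic module $M = \Zalgebra{G}/\Zalgebra{G}a$. Since $\Zalgebra{G}$ is an Ore domain and $a \neq 0$, one has $\dim_{\Zalgebra{G}} M = 0$, so the task becomes to show
\[\dim_{\bbq_p}\ker\bigl(\bar a\colon \bbq_p[G/U_i] \to \bbq_p[G/U_i]\bigr) \;=\; O\bigl(|G:U_i|^{1-1/d}\bigr).\]
Heuristically, elements of this kernel come from ``boundary'' relations introduced by the truncation to the finite quotient $G/U_i$, so one expects their number to be controlled by the volume of a thin shell in $G/U_i$.

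The quantitative boundary estimate is the main technical obstacle. I would work with the augmentation filtration on $\Zalgebra{G}$, whose associated graded algebra is, after reducing modulo $p$, a polynomial ring in $d$ variables over $\bbf_p$; consequently $\dim_{\bbf_p}(I^n/I^{n+1})$ grows like a polynomial in $n$ of degree $d-1$, while $\dim_{\bbf_p}(\Zalgebra{G}/I^n)$ grows like a polynomial of degree $d$. Calibrating $n$ so that $|G:U_i|$ is comparable to $\dim_{\bbf_p}(\Zalgebra{G}/I^n)$, the kernel elements of $\bar a$ are supported (through the induced filtration on $\bbq_p[G/U_i]$) on an annular strip of relative size $n^{d-1}/n^d = 1/n \sim |G:U_i|^{-1/d}$. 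Making this precise requires showing that multiplication by $a$ is injective on the associated graded pieces up to a bounded discrepancy, and then running a Hilbert-series comparison between $\Zalgebra{G}/I^n$ and $(\Zalgebra{G}/\Zalgebra{G}a)/I^n$. The combinatorial input that makes the error genuinely of order $|G:U_i|^{1-1/d}$ rather than merely $o(|G:U_i|)$ is the elementary bound on successive differences of a polynomial of degree $d$, namely $P(n) - P(n-1) = O(n^{d-1})$; after tracking this through the reductions one obtains the stated rate of convergence, and the ``in particular'' limit assertion follows by dividing through by $|G:U_i|$.
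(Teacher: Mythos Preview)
The paper does not supply a proof of this theorem; it is quoted from Harris \cite[Thm.~1.10]{harris_span_1979} (see also \cite[Thm.~2.1]{bergeron_growth_2014}) and used as a black box. So there is no in-paper argument to compare against; what follows is an assessment of your sketch on its own merits.

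Your reductions are sound: passing to an open uniform subgroup costs only a fixed multiplicative constant, and rewriting the statement in terms of $\rk_{\bbq_p}(\bar A)$ versus $\rk_{\mathcal{Q}_G}(A)$ is exactly the manoeuvre carried out elsewhere in the paper. The further reduction to a single nonzero $a\in\Zalgebra{G}$ is also legitimate for a \emph{fixed} module $M$, and indeed the paper proves precisely this device in Lemma~\ref{lem:approxim-ore-domains}: the discrepancy for a matrix is bounded by a finite sum of discrepancies $\Delta(x_j)$ for elements $x_j$ depending only on $M$, so the implied constant in the $O(\cdot)$ is allowed to absorb their number.

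The genuine gap is in the last paragraph. First, you switch silently between the $\bbq_p$-picture (which is what the statement concerns) and the mod-$p$ picture (where the associated graded is a polynomial ring); the algebra $\Qalgebra{G}$ is \emph{not} a power-series ring, and the Hilbert-series heuristic does not transfer without further argument. Second, and more seriously, the congruence filtration $\{U_i\}$ does not match the $I$-adic filtration: over $\bbf_p$, the kernel of $\Falgebra{G}\to\bbf_p[G/U_i]$ is the ideal $(x_1^{p^i},\ldots,x_d^{p^i})$, not $I^n$ for any $n$, so ``calibrating $n$ so that $|G:U_i|\sim\dim_{\bbf_p}\Zalgebra{G}/I^n$'' does not by itself compare the two quotients. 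One can sandwich $(x_1^{p^i},\ldots,x_d^{p^i})$ between $I^{p^i}$ and $I^{d(p^i-1)+1}$, but turning this into the sharp exponent $1-1/d$ requires a careful count that your sketch does not supply. The phrase ``injective on the associated graded pieces up to a bounded discrepancy'' is the crux and is left as an assertion. Harris's own route goes instead through the structure theory of finitely generated $\Zalgebra{G}$-modules (pseudo-isomorphism to a direct sum of a free module and torsion pieces $\Zalgebra{G}/\mathfrak{q}$), after which the growth of coinvariants for the torsion summands is controlled directly; you may find that line cleaner than trying to push the filtration argument through.
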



From Harris' result we obtain the analogue statement for rank functions.

\begin{cor}\label{cor-Harris-approx} Let \(G\) be \(p\)-adic analytic torsion-free and \(A\) be a matrix over \(\Zalgebra{G}\). Then, \[\lim_{i \to \infty} \rk_{G/U_i}^{\bbz_p} A_i = \rk_{\Zalgebra{G}} A\,,\] where \(A_i\) denotes the image of \(A\) over \(\bbz_p[G/U_i]\).
\end{cor}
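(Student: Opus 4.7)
The plan is to reduce the corollary to a direct application of Harris' theorem (Theorem~\ref{thm-harris}) by passing through the cokernel module of $A$.

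First, if $A$ is an $r \times s$ matrix over $\Zalgebra{G}$, we form the finitely presented right $\Zalgebra{G}$-module $M = \Zalgebra{G}^r / A\,\Zalgebra{G}^s$. The rank--dimension duality recalled in Section~\ref{sec-liemodules} yields $\rk_{\Zalgebra{G}}(A) = r - \dim_{\Zalgebra{G}}(M)$. By right-exactness of $\bbz_p[G/U_i] \otimes_{\Zalgebra{G}} (-)$, its finite-level counterpart is $M_i := \bbz_p[G/U_i]^r / A_i\,\bbz_p[G/U_i]^s \cong \bbz_p[G/U_i] \otimes_{\Zalgebra{G}} M$. Regarding $A_i$ as a $\bbz_p$-linear map between finite free $\bbz_p$-modules of ranks $s|G:U_i|$ and $r|G:U_i|$, rank--nullity over $\bbq_p$ translates into
\[\rk_{G/U_i}^{\bbz_p}(A_i) \,=\, r \,-\, \frac{\dim_{\bbz_p} M_i}{|G:U_i|}.\]

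Second, we invoke Theorem~\ref{thm-harris} applied to the finitely generated $\Zalgebra{G}$-module $M$, identifying the left-hand side of its conclusion with $\dim_{\bbz_p}(M_i)$ via the equality $\bbz_p[G/U_i] \otimes_{\Zalgebra{G}} M = \bbz_p \otimes_{\Zalgebra{U_i}} M$ of $U_i$-coinvariants. This produces
\[\dim_{\bbz_p}(M_i) \,=\, \dim_{\Zalgebra{G}}(M)\cdot|G:U_i| \,+\, O\bigl(|G:U_i|^{1-1/d}\bigr),\]
where $d = \dim G$. Dividing by $|G:U_i|$ and combining with the previous display, we obtain
\[\rk_{G/U_i}^{\bbz_p}(A_i) \,=\, \rk_{\Zalgebra{G}}(A) \,+\, O\bigl(|G:U_i|^{-1/d}\bigr),\]
and letting $i \to \infty$ (so $|G:U_i| \to \infty$) yields the claim. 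As a bonus, the same argument delivers the explicit rate $O(|G:U_i|^{-1/d})$.

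The only potentially delicate point is the identification of the Harris quantity with $\dim_{\bbz_p}(M_i)$ at the level of the $U_i$-cover; once this is in place, the remainder is bookkeeping, and the corollary is essentially immediate from Theorem~\ref{thm-harris} combined with the standard Sylvester rank--dimension duality.
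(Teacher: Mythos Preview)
Your proposal is correct and follows precisely the route the paper has in mind: the corollary is stated without proof, being an immediate consequence of Theorem~\ref{thm-harris} via the rank--dimension duality of Section~\ref{sec-liemodules}, which is exactly the argument you have written out. Your identification of the Harris quantity with the \(U_i\)-coinvariants \(\bbz_p \otimes_{\Zalgebra{U_i}} M \cong \bbz_p[G/U_i]\otimes_{\Zalgebra{G}} M\) also silently corrects a typo in the statement of Theorem~\ref{thm-harris} (where the left-hand side should depend on \(i\)).
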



There is a correspondence between uniform pro-$p$ groups and uniform $\bbz_p$-Lie algebras, originally described by Lazard \cite{lazard_1965}, given by $\exp$ and $\log$ functors. Let $G$ be a uniform pro-$p$ group, and consider the formal power series
\begin{displaymath}
    \exp(X)=\sum_{i=0}^{\infty}\frac{X^i}{i!} \quad \text{and} \quad \log(X+1)=\sum_{i=1}^{\infty}(-1)^{i+1}\frac{X^i}{i}.
\end{displaymath}
Inside the completed group algebra \(\Qalgebra{G}\), the series $\log(g)$ converges for all $g\in G$, and $\log(G)$ is a $\bbz_p$-Lie algebra with the bracket given by the commutator of $\Qalgebra{G}$. Furthermore, we have that $\exp(\log(G))=G$. More generally, if \(G\) is a subgroup of the group of units of a Banach \(\bbq_p\)-algebra \(A\), the topology on \(G\) is induced by the norm of \(A\) and all the elements \(g-1\) have norm strictly less than \(1\), we can identify \(\log(G) \subseteq A\) with \(\log(G) \subseteq \Qalgebra{G}\) as \(\bbz_p\)-Lie algebras \cite[Cor. 7.14]{dixon_analytic_1999}. We call such an algebra \(A\) an \emph{associative Banach envelope} of the uniform pro-\(p\) group \(G\).

This correspondence allows us to describe algebraically the $\bbq_p$-Lie algebra $\ffg$ associated to $G$ as a $p$-adic analytic group, which is nothing more than \(\ffg\simeq\bbq_p\otimes_{\bbz_p} \operatorname{log}(G)\). The Lie algebra \(\operatorname{log}(G)\) is \emph{powerful} in the sense that \([\operatorname{log}(G),\operatorname{log}(G)] \subseteq p\operatorname{log}(G)\). If \(G\) has dimension \(d\), then \(\operatorname{log}(G)\) is a free \(\bbz_p\)-module of rank \(d\). Taking any $\bbz_p$-basis \(\{h_1,\cdots,h_d\}\) of \(\operatorname{log}(G)\), the Poincaré-Birkoff-Witt theorem tells us that the monomials \(h_1^{a_1}\cdots h_d^{a_d}\) form a \(\bbq_p\)-basis of the universal enveloping algebra \(U(\ffg)\).


Define a norm on the \(\bbq_p\)-vector space \(U(\ffg)\) by setting \[\left\|\sum_{a_1,\ldots,a_d} \alpha_{a_1,\ldots,a_d}h_1^{a_1}\cdots h_d^{a_d}\right\| = \max\{|\alpha_{a_1,\ldots,a_d}|\}\,.\] Since \(\log(G)\) is a powerful Lie algebra, this norm is submultiplicative. By completing \(U(\ffg)\) we obtain a Banach \(\bbq_p\)-algebra $\widehat{U(\ffg)}$ called the \emph{completed universal enveloping algebra} of \(\ffg\), which can be described as
\[\widehat{U(\ffg)} = \left\{\sum_{a_1,\ldots,a_d} \alpha_{a_1,\ldots,a_d}h_1^{a_1}\cdots h_d^{a_d} : |\alpha_{a_1,\ldots,a_d}| \to 0\text{ as }(a_1,\ldots,a_d) \to \infty\right\}\,.\]
The completed universal enveloping algebra is an \emph{almost commutative affinoid algebra} in the sense of \cite[Sec. 3]{ardakov_irreducible_2013}. We collect some properties of the completed group algebra \(\Zalgebra{G}\) and the associated completed universal enveloping algebra in the proposition that follows.

\begin{prop}\label{prop-embedding-enveloping-alg} Let \(G\), \(\log(G)\) and \(\ffg\) be as before. Take  an associative Banach envelope \(A\) for \(G\) with logarithm map \(\log\colon G \to A\). Let \(\iota\colon \ffg \to \widehat{U(\ffg)}\) be the inclusion map from \(\ffg\) into its completed universal enveloping algebra.
\begin{enumerate}[label=(\alph*)]
    \item The ring \(\Zalgebra{G}\) is a local ring and the augmentation ideal \(I\) is the unique maximal two-sided ideal.
    \item The set \[T = \bigcup_{a \geq 0} p^a + I^{a+1}\] is an Ore set in \(\Zalgebra{G}\).
    \item The composition
    \[G \overset{\log}{\to}  \ffg \overset{\iota}{\to} \widehat{U(\ffg)} \overset{\exp}{\to} \widehat{U(\ffg)}\] induces an isomorphism \(\Zalgebra{G}T^{-1} \simeq \widehat{U(\ffg)}\). In particular, we have a flat and epic embedding \(\Zalgebra{G} \to \widehat{U(\ffg)}\).
\end{enumerate}
\end{prop}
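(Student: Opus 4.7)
All three claims are structural results standard in the theory of completed group algebras of $p$-adic analytic pro-$p$ groups. My plan is to derive (a) and (b) from the classical framework of \cite{dixon_analytic_1999, brumer_pseudocompact_1966} and to derive (c) from Ardakov's affinoid enveloping algebra machinery \cite{ardakov_irreducible_2013, ardakov_ring-theoretic_2006}.

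For (a), I would use the inverse limit presentation $\Zalgebra{G} = \varprojlim_U \bbz_p[G/U]$ over open normal subgroups $U$. Each $\bbz_p[G/U]$ is a noetherian local ring (a classical fact for group rings of finite $p$-groups over complete DVRs of residue characteristic $p$) whose maximal ideal is the kernel of the augmentation to $\bbf_p$. Assembling the compatible family of residue maps yields a surjection $\Zalgebra{G} \to \bbf_p$ whose kernel $I$ is maximal; any other maximal two-sided ideal would have to surject onto the maximal ideal in some $\bbz_p[G/U]$, forcing it to coincide with $I$. For (b), I would first check that $T$ is multiplicatively closed: if $s = p^a + x$ with $x \in I^{a+1}$ and $t = p^b + y$ with $y \in I^{b+1}$, then since $p \in I$ one has $st - p^{a+b} = p^a y + x p^b + xy \in I^{a+b+1}$. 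The right and left Ore conditions would then follow by passing to the associated graded ring of $\Zalgebra{G}$ for its Lazard-Zassenhaus filtration, which is a commutative polynomial ring over $\bbf_p$ in which elements of $T$ have non-zero leading symbols, and then lifting approximate solutions back to $\Zalgebra{G}$ via completeness of the filtration.

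For (c), I would define $\varphi\colon \Zalgebra{G} \to \widehat{U(\ffg)}$ by $g \mapsto \exp(\iota(\log(g)))$, checking convergence of both series from the hypothesis that $p$ is odd and $G$ is uniform; multiplicativity is a consequence of the Baker-Campbell-Hausdorff formula in the appropriate radius of convergence. To show $\varphi(T)$ consists of units, I would write $\varphi(p^a + x) = p^a(1 + p^{-a}\varphi(x))$, where $p$ is invertible in the $\bbq_p$-Banach algebra $\widehat{U(\ffg)}$ and a norm estimate on the image of $I^{a+1}$ gives $\|p^{-a}\varphi(x)\| < 1$, so the factor in parentheses inverts via a geometric series. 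Surjectivity of the induced map $\Zalgebra{G}T^{-1} \to \widehat{U(\ffg)}$ comes from approximating any series $\sum \alpha_{\vec a} h^{\vec a}$ in $\widehat{U(\ffg)}$ by finite truncations and clearing a common power of $p$ to land in $\Zalgebra{G}$; injectivity uses that $\Zalgebra{G}$ is a noetherian domain, so $\Zalgebra{G}T^{-1}$ embeds into $\mathcal{Q}_G$, and the map is determined by its action on PBW monomials. Flatness and the epic property then follow formally from $T$ being an Ore set.

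The main technical obstacle lies in (c): quantitatively matching the rate at which $p$-adic coefficients of elements of $\widehat{U(\ffg)}$ can grow against the denominators supplied by $T$. Every convergent series must factor as $r t^{-1}$ with $t \in T$, which requires a careful tracking of filtration-compatible norm estimates for elements of $I^{a+1}$ under $\varphi$; this is the delicate step that ties together the algebraic Ore localisation and the analytic completion.
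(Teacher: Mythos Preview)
Your proposal is essentially correct and aligns with the paper's approach: the paper's own proof consists entirely of citations---(a) to \cite[Sec.~4]{brumer_pseudocompact_1966}, (b) to \cite[Lem.~2.4]{ardakov_irreducible_2013}, and (c) to \cite[Thm.~10.4]{ardakov_irreducible_2013}---and what you have written is a reasonable outline of the content behind those citations. Your identification of the filtration-matching in (c) as the genuine technical work is accurate; that is precisely what Ardakov's Theorem~10.4 carries out, and it is not something one would reprove in this paper.

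One minor comment on your injectivity sketch in (c): the phrasing ``$\Zalgebra{G}T^{-1}$ embeds into $\mathcal{Q}_G$, and the map is determined by its action on PBW monomials'' is a bit roundabout. The cleaner route is to first establish that $\varphi\colon \Zalgebra{G} \to \widehat{U(\ffg)}$ is injective (e.g.\ via the PBW description on both sides, or because elements of $T$ are regular and the associated graded map is injective), and then observe that injectivity of the localised map $\Zalgebra{G}T^{-1} \to \widehat{U(\ffg)}$ is automatic: if $rt^{-1}\mapsto 0$ then $\varphi(r)=0$, hence $r=0$. No appeal to $\mathcal{Q}_G$ is needed at this step.
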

\begin{proof} (a) is generally true for completed group algebras of pro-\(p\) groups over DVRs with residue field of characteristic \(p\), see \cite[Sec. 4]{brumer_pseudocompact_1966}. (b) follows from \cite[Lem. 2.4]{ardakov_irreducible_2013}. (c) is shown in \cite[Thm. 10.4]{ardakov_irreducible_2013}.
\end{proof}

\begin{rmk}\label{rmk-profinite-points} Let us observe how this objects appear in the context of  Conjecture~\ref{lie-luck-conj}. The group \(\schG(\bbz_p)\) of \(\bbz_p\)-points of \(\schG\) is a \(p\)-adic analytic group and, if \(\ffh\) is the Chevalley \(\bbz\)-Lie algebra of \(\schG\), it is related to the analytic Lie algebra of \(G\) by an extension of scalars: \(\ffg \simeq \bbq_p \otimes_{\bbz} \ffh\).

Consider the congruence subgroups \(U_i = \ker(\schG(\bbz_p) \to \schG(\bbz_p/p^i\bbz_p))\). By \cite[Lem. 2.2.2]{huber_complements_2011} and the remark thereafter, the subgroups \(U_i\) are all uniform pro-\(p\) groups for \(i \geq 1\). In fact, \(U_1\) can be identified with J.-P. Serre's standard formal group associated to the group law defined by \(\schG\), and the \(U_i\) are given by the lower \(p\)-central series of \(U_1\). Observe that \(\ffg\) is the \(\bbq_p\)-Lie algebra of all the \(U_i\), since they are open in \(\schG(\bbz_p)\). To each \(U_i\), we can associate Lazard's \(\bbz_p\)-Lie algebra \(\log(U_i)\), and we have an isomorphism of \(\bbz_p\)-Lie algebras \(\log(U_i) \simeq p^i\bbz_p\otimes_{\bbz} \ffh\) by observing that \(\schG(\bbz_p)\) is linear over \(\bbq_p\) and thus \(U_i \to \scheme{Mat}_n(\bbq_p)\) is an associative Banach envelope.
\end{rmk}



If \(M\) is a \(U(\ffg)\)-module that is finite dimensional over \(\bbq_p\), then the action of \(U(\ffg)\) continuously extends to an action of \(\widehat{U(\ffg)}\).
Consider now the modules \(\Wmodk{\bbq_p}\) of Section~\ref{sec-liemodules}. This action also  defines a module \(\widetilde{\Wmodk{\bbq_p}}\) over \(\Qalgebra{U_i}\) by restriction. On the other hand, we have an action of the abstract group algebra \(\bbz_p[U_i]\) on \(\Wmodk{\bbq_p}\) 
which extends to an action of \(\Zalgebra{U_i}\) -- by the completeness of \(\Wmodk{\bbq_p}\) and the description of \(\Zalgebra{U_i}\) as a ring of non-commutative power series \cite[Thm. 7.20]{dixon_analytic_1999}. Hence, \(\Wmodk{\bbq_p}\) also has the structure of a \(\Qalgebra{U_i}\)-module.

\begin{lem}\label{lem-same-module} We have that \(\widetilde{\Wmodk{\bbq_p}} \simeq \Wmodk{\bbq_p}\) as \(\Qalgebra{U_i}\)-modules.
\end{lem}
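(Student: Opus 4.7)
The plan is to identify the two $\Qalgebra{U_i}$-module structures on the common underlying $\bbq_p$-vector space $\Wmodk{\bbq_p}$ by showing that they agree on the (topologically dense) subset of group elements $g \in U_i \subseteq \Zalgebra{U_i}$, then extending by continuity.

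First I would unpack both actions on a single $g \in U_i$. Let $\rho\colon \schG \to \Aut(\Wmodk{})$ denote the rational representation with highest weight $\bm\lambda$, and $d\rho\colon \ffg \to \End(\Wmodk{\bbq_p})$ its differential. In the direct structure, $g$ acts on $\Wmodk{\bbq_p}$ simply as $\rho(g)$. In $\widetilde{\Wmodk{\bbq_p}}$, the element $g$ is sent under the embedding of Proposition~\ref{prop-embedding-enveloping-alg}(c) to $\exp(\log g) \in \widehat{U(\ffg)}$, where $\log g \in \log(U_i) \simeq p^i\bbz_p \otimes_\bbz \ffh$ by Remark~\ref{rmk-profinite-points}. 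The continuous extension of $d\rho$ to $\widehat{U(\ffg)}$ then sends this to the convergent operator series $\exp(d\rho(\log g)) \in \End(\Wmodk{\bbq_p})$, where convergence is guaranteed by the topological nilpotency of $d\rho(\log g)$ coming from the factor $p^i$.

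The crux of the argument is the identity $\rho(g) = \exp(d\rho(\log g))$ for every $g \in U_i$. Since $\rho$ is a morphism of algebraic groups defined over $\bbq$, its base change yields a $\bbq_p$-analytic homomorphism $\schG(\bbq_p) \to \Aut(\Wmodk{\bbq_p})$ whose derivative at the identity is precisely $d\rho$. Both $\schG(\bbz_p) \subseteq \GL_m(\bbz_p)$ for a suitable matrix embedding, and $\Aut(\Wmodk{\bbq_p})$, sit inside matrix algebras that serve as associative Banach envelopes for $U_i$ and $\rho(U_i)$ in the sense of Section~\ref{sec-analytic}. The classical power series defining $\log$ and $\exp$ in these envelopes are then intertwined by $\rho$ because $\rho$ is given by polynomial formulas on matrix entries, which yields the desired identity.

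Once agreement on the set $U_i$ is established, the lemma follows from continuity: the direct action is by construction the continuous extension of $\bbz_p[U_i] \to \End(\Wmodk{\bbq_p})$ to $\Zalgebra{U_i}$ guaranteed by \cite[Thm. 7.20]{dixon_analytic_1999}, while the action on $\widetilde{\Wmodk{\bbq_p}}$ factors as the continuous embedding $\Qalgebra{U_i} \hookrightarrow \widehat{U(\ffg)}$ followed by the continuous action of the Banach algebra $\widehat{U(\ffg)}$ on the finite-dimensional $\bbq_p$-vector space $\Wmodk{\bbq_p}$. Since the $\bbz_p$-span of $U_i$ is dense in $\Zalgebra{U_i}$, the two actions coincide. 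The only subtlety is bookkeeping with the several exponential/logarithm pairs (in ambient matrix algebras, in $\widehat{U(\ffg)}$, and in $\End(\Wmodk{\bbq_p})$) and verifying they match under $\rho$, which is routine given that $\rho$ is algebraic.
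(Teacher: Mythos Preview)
Your proposal is correct and follows essentially the same approach as the paper: both arguments identify the underlying vector spaces and reduce the claim to the identity $\rho(g) = \exp(d\rho(\log g))$ for $g \in U_i$, which is the Lie group/Lie algebra correspondence, together with a continuity argument. The paper packages this as the commutativity of a diagram with $U_i$, $\ffg$, $\widehat{U(\ffg)}$, $\End_{\bbq_p}(\Wmodk{\bbq_p})$ and $\mathfrak{gl}_{\bbq_p}(\Wmodk{\bbq_p})$, whose outer square is precisely your key identity and whose lower triangle is your continuity step, but the content is the same.
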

\begin{proof} After identifying the underlying sets of \(\widetilde{\Wmodk{\bbq_p}}\) and \(\Wmodk{\bbq_p}\), the isomorphism is an immediate consequence of the Lie group/Lie algebra correspondence, which provides the outer square of a commutative diagram
\begin{center}
\begin{tikzcd}
U_i \arrow[rr] \arrow[dd, "\log"']       &                                    & \End_{\bbq_p}(\Wmodk{\bbq_p})                              \\
                                         & \widehat{U(\ffg)} \arrow[ru] &                                                \\
\ffg \arrow[rr] \arrow[ru, "\exp"] &                                    & \mathfrak{gl}_{\bbq_p}(\Wmodk{\bbq_p}) \arrow[uu, "\exp"']
\end{tikzcd}
\end{center}
whose lower triangle commutes by continuity.
\end{proof}

Now that there is no ambiguity about the \(\Qalgebra{U_i}\)-module structure of \(\Wmodk{\bbq_p}\), we conclude this section by remarking that, for every closed subgroup \(H\) of \(U_i\) and finitely generated \(\Qalgebra{H}\)-module \(M\), we have the Sylvester module dimension functions \[\dim^{H}_{\Wmodk{\bbq_p}} M= \frac{\dim_{\bbq_p} M \otimes_{\Qalgebra{H}} \Wmodk{\bbq_p}}{\dim_{\bbq_p} \Wmodk{\bbq_p}}\] with associated Sylvester matrix rank functions \(\rk^H_{\Wmodk{\bbq_p}}\).

\section{Going from \texorpdfstring{\(\bbc\)}{C} to \texorpdfstring{\(p\)}{p}-adic fields}\label{sec-pro-p}

We keep the notation from Section~\ref{sec-liemodules}: \(\schG\) is a semi-simple algebraic group over \(\bbc\), each dominant integral element \(\bm\lambda\) in the character group \(\scheme X(\scheme T)\) of a maximal torus \(\scheme T\) yields an irreducible rational representation \(\Wmodk{}\) defined over \(\bbq\) and \(\Gamma\) is a finitely generated subgroup of the group \(\schG(S_0)\) of \(S_0\)-points of \(\schG\), where \(S_0\) is a finitely generated subring of \(\bbc\) with field of fractions \(L_0\). 

The first step in translating Conjecture~\ref{lie-luck-conj} to a \(p\)-adic analytic setting is the following result. By Cassels' embedding theorem \cite{casselsEmbeddingTheoremFields1976}, for infinitely many primes \(p\) there is an embedding of \(S_0\) into \(\bbz_p\). From this, we obtain the following result:

\begin{prop}\label{prop:embedding} For infinitely many primes \(p\), \(\schG(S_0)\) embeds in \(\schG(\bbz_p)\).
\end{prop}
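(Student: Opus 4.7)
The plan is to combine the stated application of Cassels' embedding theorem with the fact that $\schG$ is an affine group scheme of finite type over $\bbz$, so that its functor of points sends ring embeddings to group embeddings.

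First I would invoke \cite{casselsEmbeddingTheoremFields1976} to produce, for infinitely many primes $p$, a ring embedding $\iota\colon S_0 \hookrightarrow \bbz_p$. This is the step where all the number-theoretic content lies; from here the argument is purely formal. Note that since $\bbc$ has characteristic $0$, the subring $S_0$ contains $\bbz$, so $\iota$ is automatically a $\bbz$-algebra homomorphism.

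Next, I would exploit the fact that (by the simply-connected hypothesis and the identification with the associated Chevalley group scheme, cf.\ the discussion after the setup in Section~\ref{sec-liemodules}) $\schG$ is an affine group scheme of finite type over $\bbz$. Therefore $\schG(-)$ is functorial on commutative rings, and in particular $\iota$ induces a group homomorphism $\schG(\iota)\colon \schG(S_0) \to \schG(\bbz_p)$. To check injectivity, I would choose a closed immersion $\schG \hookrightarrow \bba^N_{\bbz}$ coming from a finite generating set of the coordinate ring of $\schG$. This realizes $\schG(R)$ as a subset of $R^N$ for every commutative ring $R$, and the map $\schG(\iota)$ is simply the restriction of the coordinate-wise map $\iota^N\colon S_0^N \to \bbz_p^N$. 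Since $\iota$ is injective, so is $\iota^N$, and hence so is its restriction $\schG(\iota)$.

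There is no real obstacle here: the only nontrivial input is Cassels' embedding theorem, which is invoked as a black box. The rest is a direct application of the functor-of-points formalism for an affine scheme of finite type, which preserves injective ring maps.
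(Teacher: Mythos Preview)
Your proposal is correct and follows the same approach as the paper, which simply states that the result follows from Cassels' embedding theorem without writing out any further argument. You have supplied the functor-of-points details that the paper leaves implicit.
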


We call a prime \(p\) for which Proposition~\ref{prop:embedding} holds an \emph{enveloping prime} of the ring \(S\). If \(A\) is a matrix over \(\bbc[\Gamma]\), then it can be seen as a matrix over \(S[\Gamma]\) for some finitely generated subring \(S\) of \(\bbc\) containing \(S_0\) with field of fractions \(L\). Observe that by the Remark~\ref{remark-dim-extensions}, we have that \[\rk^\Gamma_{\Wmodk{}} A = \rk^\Gamma_{\Wmodk{L}} A = \rk^\Gamma_{\Wmodk{\bbq_p}} A\] for any enveloping prime \(p\) of \(S\).

We now proceed to obtain the finite index subgroup \(\Gamma'\) for which we are going to try and prove Conjecture~\ref{lie-luck-conj-quant}. We recall the congruence subgroups \(U_i\) of \(\schG(\bbz_p)\) defined on Remark~\ref{rmk-profinite-points}, i.e. \(U_i = \ker(\schG(\bbz_p) \to \schG(\bbz_p/p^i\bbz_p))\).

\begin{lem}\label{lem-existence-down} For any enveloping prime \(p\) of \(S\), there exists a finite index normal subgroup \(\Gamma'\) of \(\Gamma\) such that its \(p\)-adic closure \(\overline{\Gamma'}\) is a uniform pro-\(p\) subgroup of \(U_1\).
\end{lem}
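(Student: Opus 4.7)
The plan is to work entirely inside $\schG(\bbz_p)$ using the embedding provided by the enveloping prime $p$, first cutting $\Gamma$ down to the first congruence subgroup, then taking a $p$-adic closure, and finally passing to a characteristic uniform open subgroup.

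\medskip

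\noindent\textbf{Step 1 (Normal subgroup in \(U_1\)).} I would set $\Lambda := \Gamma \cap U_1$. Because $\schG$ is a Chevalley group scheme over $\bbz$, the quotient $\schG(\bbz_p)/U_1 \hookrightarrow \schG(\bbf_p)$ is finite, so $U_1$ has finite index in $\schG(\bbz_p)$; consequently $[\Gamma:\Lambda]<\infty$. Moreover, $U_1$ is normal in $\schG(\bbz_p)$, so $\Lambda$ is a normal subgroup of $\Gamma$.

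\medskip

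\noindent\textbf{Step 2 (Closure and characteristic uniform subgroup).} Let $H$ be the $p$-adic closure of $\Lambda$ inside $U_1$. As a closed subgroup of the $p$-adic analytic group $U_1$, the group $H$ is itself $p$-adic analytic (\cite[Thm. 9.6]{dixon_analytic_1999}); since $p$ is odd, $U_1$ is torsion-free pro-$p$, and therefore so is $H$. Then I would invoke the fact that in any $p$-adic analytic pro-$p$ group, the lower $p$-series terms $P_n(H)$ are open characteristic subgroups which are uniform for all sufficiently large $n$ (\cite[Thm. 3.6 and Cor. 4.3]{dixon_analytic_1999}). Fix such an $n$ and set $V := P_n(H)$, an open uniform characteristic subgroup of $H$.

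\medskip

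\noindent\textbf{Step 3 (Descent and normality).} Finally I would take $\Gamma' := \Lambda \cap V$. Since $V$ is open in $H = \overline{\Lambda}$, the subgroup $\Gamma'$ is dense in $V$, whence $\overline{\Gamma'} = V$ is uniform; also $[\Lambda : \Gamma'] = [H : V] < \infty$, so $[\Gamma : \Gamma'] < \infty$. For normality, note that conjugation by any $\gamma \in \Gamma$ is a continuous automorphism of $\schG(\bbz_p)$ which preserves $\Lambda$; by continuity it preserves $H = \overline{\Lambda}$ as well. Since $V$ is characteristic in $H$ (and every abstract automorphism of a finitely generated $p$-adic analytic group is continuous, as finite-index subgroups are open), conjugation by $\gamma$ sends $V$ to $V$ and hence $\Gamma' = \Lambda \cap V$ to itself.

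\medskip

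\noindent\textbf{Main obstacle.} There is no single hard step; the argument is a combination of standard facts about $p$-adic analytic groups. The only point that deserves care is ensuring \emph{characteristicity} of $V$ is actually used correctly, namely that $\Gamma$-conjugation extends to a (continuous) automorphism of $H$ rather than merely a homomorphism, so that the characteristic property can be invoked to pin down $V$. This is why I prefer the concrete choice $V = P_n(H)$, for which preservation under any topological automorphism of $H$ is immediate from the definition of the lower $p$-series.
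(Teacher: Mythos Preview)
Your proof is correct and follows essentially the same route as the paper's: pass to a $p$-adic analytic closure and extract an open uniform subgroup. The paper's version is much terser---it takes the closure $\overline{\Gamma}$ directly in $\schG(\bbz_p)$, picks an open uniform \emph{normal} subgroup $V\leq\overline{\Gamma}$, and sets $\Gamma'=\Gamma\cap V$; your choice to first intersect with $U_1$ and to use the lower $p$-series to get a \emph{characteristic} $V$ is a slightly more explicit way of securing both $V\subseteq U_1$ and $\Gamma'\trianglelefteq\Gamma$, but the underlying idea is identical.
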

\begin{proof} The \(p\)-adic closure \(\overline{\Gamma}\) in \(\schG(\bbz_p)\) is \(p\)-adic analytic, and hence contains some open uniform normal subgroup \(V\). We take \(\Gamma' = \Gamma \cap V\).
\end{proof}

Let \(G = \overline{\Gamma'}\), where $\Gamma'$ is the subgroup from Lemma~\ref{lem-existence-down}. Observe that both \(G\) and \(\Gamma'\) are torsion-free. Hence, if \(\chi\) is any central character of \(\scheme{Z}(\schG)\), the twisted von Neumann rank \(\rk^\chi_{\Gamma'}\) coincides with the usual von Neumann rank \(\rk_{\Gamma'}\) by Remark~\ref{rmk-twisted-torsion-free}. Using Lück's approximation and Harris' theorem, we can compare \(\rk_{\Gamma'}\) to \(\rk_{\Zalgebra{G}}\).

\begin{prop}\label{prop-transfer-luck-harris} Let \(A\) be a matrix over \(S[\Gamma']\), where \(S_0 \leq S\) is a finitely generated subring of \(\bbc\). If \(p\) is an enveloping prime of \(S\), we have that \[\rk_{\Gamma'} A = \rk_{\Qalgebra{G}} A = \rk_{\Qalgebra{U_1}} A\,.\]
\end{prop}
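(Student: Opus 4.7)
The plan is to show both equalities arise from comparing two different approximation procedures for the same limit: Lück's approximation on the discrete side for $\Gamma'$, and Harris' theorem (Corollary~\ref{cor-Harris-approx}) on the pro-$p$ side for $G$ and for $U_1$. The key is to choose compatible residual chains so that the approximating terms match level by level.

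First I would set up a common filtration. Since $\bigcap_i U_i = \{1\}$ inside $\schG(\bbz_p)$, the subgroups $\Gamma'_i := \Gamma' \cap U_i$ form a residual chain of finite index normal subgroups of $\Gamma'$, and $V_i := G \cap U_i$ form a filtration of $G$ by open normal subgroups with trivial intersection. Density of $\Gamma'$ in $G$ yields natural isomorphisms $\Gamma'/\Gamma'_i \simeq G/V_i$, all of which are finite $p$-groups. Under this identification, the image $A_i$ of $A$ in $\bbc[\Gamma'/\Gamma'_i]$ (obtained via $S \hookrightarrow \bbc$) is identified with the image $\bar A_i^G$ in $\bbq_p[G/V_i]$ (obtained via $S \hookrightarrow \bbz_p$), since the entries of $A$ live in $S$ and are simply being viewed through the two compatible embeddings. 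Because the rank of a matrix over a field is invariant under field extension, this gives $\rk_\bbc A_i = \rk_{\bbq_p} \bar A_i^G$.

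For the first equality, Lück's approximation (see Appendix~\ref{sec-vonNeumann}) applied to the torsion-free group $\Gamma'$ along $\{\Gamma'_i\}$ yields
\[\rk_{\Gamma'} A = \lim_{i \to \infty} \frac{\rk_\bbc A_i}{[\Gamma' : \Gamma'_i]}\,,\]
while Corollary~\ref{cor-Harris-approx} applied to $G$ along $\{V_i\}$ gives
\[\rk_{\Zalgebra{G}} A = \lim_{i \to \infty} \frac{\rk_{\bbq_p} \bar A_i^G}{[G : V_i]}\,.\]
Since $[\Gamma' : \Gamma'_i] = [G : V_i]$ and the numerators agree by the previous paragraph, these two limits are equal. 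As $\Qalgebra{G}$ embeds in the Ore localization $\mathcal{Q}_G$ of $\Zalgebra{G}$ (Proposition~\ref{prop-embedding-enveloping-alg}), we also have $\rk_{\Qalgebra{G}} A = \rk_{\Zalgebra{G}} A$, which concludes the first equality.

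For the second equality, apply Corollary~\ref{cor-Harris-approx} to the ambient uniform group $U_1$ along its own chain $\{U_i\}$, obtaining $\rk_{\Qalgebra{U_1}} A = \lim_i [U_1 : U_i]^{-1}\rk_{\bbq_p} \bar A_i^{U_1}$. The image $\bar A_i^{U_1}$ lies in the subalgebra $\bbq_p[GU_i/U_i]$, which is naturally isomorphic to $\bbq_p[G/V_i]$ via $gU_i \mapsto gV_i$, and under this isomorphism $\bar A_i^{U_1}$ is identified with $\bar A_i^G$. Since $\bbq_p[U_1/U_i]$ is free of rank $[U_1 : GU_i]$ over $\bbq_p[GU_i/U_i]$, we get $\rk_{\bbq_p} \bar A_i^{U_1} = [U_1 : GU_i]\cdot \rk_{\bbq_p} \bar A_i^G$, and dividing by $[U_1 : U_i] = [U_1 : GU_i]\cdot [G : V_i]$ shows that the normalized ranks for $U_1$ and $G$ coincide at every level, hence so do their limits. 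I expect the main technical point to be this free-module identification connecting the two Harris approximations; everything else is bookkeeping with field extensions and the density of $\Gamma'$ in $G$, with no central character subtleties to worry about because both $\Gamma'$ and $G$ are torsion-free.
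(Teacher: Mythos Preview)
Your argument is correct and follows the paper's strategy for the first equality: set $\Gamma'_i = \Gamma' \cap U_i$, use density to identify $\Gamma'/\Gamma'_i \simeq G/(G\cap U_i)$, invoke L\"uck approximation over $\bbc$ and Harris' Corollary~\ref{cor-Harris-approx} over $\bbz_p$, and bridge the two via invariance of the finite-quotient rank under field extension (the paper passes through the intermediate field $L = \operatorname{Frac}(S)$ explicitly, but this is cosmetic). One small mis-citation: the inclusion $\Qalgebra{G} \subseteq \mathcal{Q}_G$ is immediate from $\Qalgebra{G} = \bbq_p \otimes_{\bbz_p} \Zalgebra{G}$ and does not need Proposition~\ref{prop-embedding-enveloping-alg}.

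For the second equality your route diverges from the paper's. The paper observes in one line that the inclusion $\Zalgebra{G} \hookrightarrow \Zalgebra{U_1}$ of Ore domains induces an embedding $\mathcal{Q}_G \hookrightarrow \mathcal{Q}_{U_1}$ of their division rings of fractions, so the two pulled-back rank functions agree on $\Zalgebra{G}$ automatically. You instead re-run Harris approximation for $U_1$ and match it to the approximation for $G$ level by level, using that $\bbq_p[U_1/U_i]$ is free of rank $[U_1:GU_i]$ over $\bbq_p[GU_i/U_i] \simeq \bbq_p[G/V_i]$, so the $\bbq_p$-ranks scale by exactly the factor absorbed by the normalization. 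This is valid and more elementary---it avoids needing to know that closed subgroups of uniform pro-$p$ groups yield embeddings of Ore quotient rings---while the paper's version is shorter and makes clear that the equality $\rk_{\Qalgebra{G}} = \rk_{\Qalgebra{U_1}}$ holds on all of $\Qalgebra{G}$, not just on matrices coming from $S[\Gamma']$.
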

\begin{proof} Let \(L\) be the field of fractions of \(S\), and take \(\Gamma_i' = \Gamma' \cap U_i\). Then, the \(\Gamma_i'\) constitute a chain of normal subgroups of \(\Gamma'\) of finite index with trivial intersection, and moreover \(\Gamma'/\Gamma_i'\simeq G/(G\cap U_i)\). By Lück's approximation theorem, we have that 
\begin{align*}
    \rk_{\Gamma'} A &= \lim_{i \to \infty} \rk_{\Gamma'/\Gamma_i'}^\bbc A_i = \lim_{i \to \infty} \rk_{\Gamma'/\Gamma_i'}^L A_i \\&= \lim_{i \to \infty} \rk_{G/(G\cap U_i)}^{\bbq_p} A_i = \lim_{i \to \infty} \rk_{G/(G\cap U_i)}^{\bbz_p} A_i\,,
\end{align*}
where \(A_i\) denotes the image of \(A\) in \(S[\Gamma'/\Gamma_i'] \subseteq \bbz_p[G/(G\cap U_i)]\). Now, the claim follows from Corollary~\ref{cor-Harris-approx}, observing that the embedding \(\Zalgebra{G} \to \Zalgebra{U_1}\) induces an embedding of their respective division rings of fractions.
\end{proof}

It now only remains to reinterpret \(\rk_{\Wmodk{\bbq_p}}^{\Gamma'} A\) in terms of \(G\).

\begin{lem}\label{transfer-Lie-C-p-adic} For every matrix \(A\) over \(\bbq_p[\Gamma']\), we have that \[\rk_{\Wmodk{\bbq_p}}^{\Gamma'} A = \rk_{\Wmodk{\bbq_p}}^{G} A = \rk_{\Wmodk{\bbq_p}}^{U_1} A\,.\]
\end{lem}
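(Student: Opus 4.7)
The plan is to show that all three rank functions are pullbacks of the canonical $\bbq_p$-rank on $\End_{\bbq_p}(\Wmodk{\bbq_p})$ (normalised by $\dim_{\bbq_p}\Wmodk{\bbq_p}$) along compatible algebra homomorphisms. Concretely, the rank function $\rk^{\Gamma'}_{\Wmodk{\bbq_p}}$ comes from the representation map $\bbq_p[\Gamma'] \to \End_{\bbq_p}(\Wmodk{\bbq_p})$, while $\rk^{G}_{\Wmodk{\bbq_p}}$ and $\rk^{U_1}_{\Wmodk{\bbq_p}}$ come from the maps $\Qalgebra{G} \to \End_{\bbq_p}(\Wmodk{\bbq_p})$ and $\Qalgebra{U_1} \to \End_{\bbq_p}(\Wmodk{\bbq_p})$ given by the $\Qalgebra{G}$- and $\Qalgebra{U_1}$-module structures on $\Wmodk{\bbq_p}$ discussed in Lemma~\ref{lem-same-module}. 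The key observation is that the three induced actions on $\Wmodk{\bbq_p}$ fit into a commutative diagram: the action of $U_1$ restricts to the action of $G$ along the inclusion $G \hookrightarrow U_1$, and both restrict along $\Gamma' \hookrightarrow G$ to the original action of $\Gamma'$ coming from $\Gamma' \subseteq \schG(S_0) \subseteq \schG(\bbz_p)$.

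Given this, the cleanest approach is to compute the associated dimension functions directly. For an $r \times s$ matrix $A$, let $N = \bbq_p[\Gamma']^r/A\bbq_p[\Gamma']^s$, $M_G = \Qalgebra{G}^r/A\Qalgebra{G}^s$ and $M_{U_1} = \Qalgebra{U_1}^r/A\Qalgebra{U_1}^s$. Tensoring each cokernel with $\Wmodk{\bbq_p}$ over the corresponding ring and using right exactness of tensor product, I expect to find
\[
N \otimes_{\bbq_p[\Gamma']} \Wmodk{\bbq_p} \;=\; M_G \otimes_{\Qalgebra{G}} \Wmodk{\bbq_p} \;=\; M_{U_1} \otimes_{\Qalgebra{U_1}} \Wmodk{\bbq_p}
\]
as $\bbq_p$-vector spaces, since each is literally the cokernel of the same $\bbq_p$-linear map $A\colon \Wmodk{\bbq_p}^s \to \Wmodk{\bbq_p}^r$ (whose entries act by the common action described above). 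Dividing the common $\bbq_p$-dimension by $\dim_{\bbq_p}\Wmodk{\bbq_p}$ yields the equality of the three dimension functions on these cokernels, and thus of the associated rank functions on $A$ via the duality formula~(\ref{dualityrankdimension}).

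The only point that requires care is verifying the compatibility of the three actions on $\Wmodk{\bbq_p}$. For the pair $(G, U_1)$ this is immediate from Lemma~\ref{lem-same-module} applied to both $G$ and $U_1$: both $\Qalgebra{G}$- and $\Qalgebra{U_1}$-module structures are restrictions of the single $\widehat{U(\ffg)}$-action via the embeddings of Proposition~\ref{prop-embedding-enveloping-alg}(c). For the pair $(\Gamma', G)$ one needs to check that the action of an element $\gamma \in \Gamma'$ via its image in $\Qalgebra{G}$ agrees with the action of $\gamma$ via $\Gamma' \subseteq \schG(S_0) \subseteq \schG(\bbz_p)$. This again follows from Lemma~\ref{lem-same-module}: the $\Qalgebra{G}$-action on $\Wmodk{\bbq_p}$ is the continuous extension of the rational action of $G \subseteq \schG(\bbz_p)$, whose restriction to $\Gamma' \subseteq G$ is precisely the original $\Gamma'$-action used to define $\rk^{\Gamma'}_{\Wmodk{\bbq_p}}$. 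No serious obstacle is anticipated; the statement is essentially bookkeeping once the compatibility of module structures has been set up in Section~\ref{sec-analytic}.
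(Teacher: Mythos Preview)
Your proposal is correct and follows essentially the same approach as the paper: both arguments compute the associated dimension functions on the cokernel modules and reduce to the identification $M \otimes_{\bbq_p[\Gamma']} \Wmodk{\bbq_p} \simeq \widehat{M} \otimes_{\Qalgebra{G}} \Wmodk{\bbq_p}$ (and analogously for $U_1$) via associativity of tensor products / right exactness. Your explicit discussion of the compatibility of the three module structures on $\Wmodk{\bbq_p}$ makes precise a point the paper leaves implicit in its use of $(M \otimes_{\bbq_p[\Gamma']} \Qalgebra{G}) \otimes_{\Qalgebra{G}} \Wmodk{\bbq_p} \simeq M \otimes_{\bbq_p[\Gamma']} \Wmodk{\bbq_p}$.
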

\begin{proof} Let \(A \in \Mat_{n\times m}(\bbq_p[\Gamma]')\), \(M\) be the \(\bbq_p[\Gamma']\)-module \(\bbq_p[\Gamma']^n/A\bbq_p[\Gamma']^m\) and \(\widehat{M}\) be the \(\Qalgebra{G}\)-module \(\Qalgebra{G}^n/A\Qalgebra{G}^m\). It is immediately checked that \(\widehat{M} = M \otimes_{\bbq_p[\Gamma']} \Qalgebra{G}\). Furthermore, we have that 
\begin{align*}
    \dim_{\Wmodk{\bbq_p}}^{G} \widehat{M} &= \frac{\dim_{\bbq_p} \widehat{M} \otimes_{\Qalgebra{G}} \Wmodk{\bbq_p}}{\dim_{\bbq_p}\Wmodk{\bbq_p}}\\
    &= \frac{\dim_{\bbq_p} (M \otimes_{\bbq_p[\Gamma']} \Qalgebra{G}) \otimes_{\Qalgebra{G}} \Wmodk{\bbq_p}}{\dim_{\bbq_p} \Wmodk{\bbq_p}}\\
    &= \frac{\dim_{\bbq_p} M \otimes_{\bbq_p[\Gamma']} \Wmodk{\bbq_p}}{\dim_{\bbq_p} \Wmodk{\bbq_p}}\\
    &= \dim^{\Gamma'}_{\Wmodk{\bbq_p}} M.
\end{align*}
The first equality of matrix ranks then follows, and the second equality is proven analogously.
\end{proof}

We are now able to state and prove the connection between   Conjectures~\ref{lie-luck-conj} and~\ref{lie-luck-conj-quant} and the \(p\)-adic analytic setting.

\begin{thm}\label{thm-reduction-padic} Let \(\schG\) be a simply-connected and semi-simple algebraic group. Assume that, for any increasing sequence of weights \(\{\bm\lambda = \bm \lambda(k)\}\) such that \(\min \{\lambda_i(k)\}\) grows to infinity, we have that 
\begin{equation}
	\lim_{k \to \infty} \rk_{\Wmodk{\bbq_p}}^{U_1} = \rk_{\Qalgebra{U_1}}
	\label{eqn-lim-rk-cong}
\end{equation} as rank functions on \(\Qalgebra{U_1}\) for all but finitely many primes \(p\). Then, Conjecture~\ref{lie-luck-conj} holds for all finitely generated subgroups \(\Gamma\) of \(\schG\).
\end{thm}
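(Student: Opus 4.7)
The plan is to reduce Conjecture~\ref{lie-luck-conj} for an arbitrary finitely generated $\Gamma \leq \schG(\bbc)$ to the hypothesized convergence on $\Qalgebra{U_1}$ via a torsion-free finite index subgroup $\Gamma' \trianglelefteq \Gamma$, and then to lift the resulting conclusion from $\Gamma'$ back up to $\Gamma$ using the crossed product framework developed in Appendix~\ref{sec-crossed-products}. Since both $\rk^\Gamma_{\Wmodk{}}$ and $\rk^\chi_\Gamma$ are Sylvester matrix rank functions, it is enough to verify pointwise convergence at a fixed matrix $A \in \Mat(\bbc[\Gamma])$.

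First, I would enlarge $S_0$ to a finitely generated subring $S \subseteq \bbc$ containing the coefficients appearing in the entries of $A$, and select a prime $p$ that is an enveloping prime of $S$ (supplied by Proposition~\ref{prop:embedding}) and lies outside the finite exceptional set of the hypothesis. Lemma~\ref{lem-existence-down} then produces a finite index normal subgroup $\Gamma' \trianglelefteq \Gamma$ whose $p$-adic closure $G = \overline{\Gamma'}$ sits as a uniform subgroup of $U_1$; the group $\Gamma'$ is torsion-free, so $\Gamma' \cap \scheme Z(\schG) = \{1\}$ and every twisted von Neumann rank on $\Gamma'$ reduces to the untwisted one by Remark~\ref{rmk-twisted-torsion-free}.

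Next, I would show that Conjecture~\ref{lie-luck-conj} holds on $\Gamma'$ with trivial central character. For any matrix $B$ over $S[\Gamma']$, Proposition~\ref{prop-transfer-luck-harris} identifies $\rk_{\Gamma'}(B)$ with $\rk_{\Qalgebra{U_1}}(B)$, and Lemma~\ref{transfer-Lie-C-p-adic} together with the field-extension invariance of Remark~\ref{remark-dim-extensions} identifies $\rk^{\Gamma'}_{\Wmodk{}}(B)$ with $\rk^{U_1}_{\Wmodk{\bbq_p}}(B)$. Feeding both equalities into the hypothesized limit~(\ref{eqn-lim-rk-cong}) gives $\lim_k \rk^{\Gamma'}_{\Wmodk{}}(B) = \rk_{\Gamma'}(B)$, and letting $S$ vary over all finitely generated subrings of $\bbc$ containing $S_0$ extends this convergence to every matrix over $\bbc[\Gamma']$.

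Finally, I would lift the conjecture from $\Gamma'$ to $\Gamma$, which I expect to be the main obstacle. The inclusion $\bbc[\Gamma'] \subseteq \bbc[\Gamma]$ exhibits $\bbc[\Gamma]$ as a crossed product by the finite group $\Gamma/\Gamma'$; a direct computation with induced modules shows that the restriction of $\rk^\Gamma_{\Wmodk{}}$ to $\bbc[\Gamma']$ coincides with $\rk^{\Gamma'}_{\Wmodk{}}$, while the restriction of $\rk^\chi_\Gamma$ coincides with $\rk_{\Gamma'}$. A naive induced-matrix formula of the form $\rk^\Gamma_{\Wmodk{}}(A) = n^{-1}\rk^{\Gamma'}_{\Wmodk{}}(\tilde A)$ breaks down in the presence of a nontrivial character $\chi$ on $\scheme Z(\schG)\cap \Gamma$, since both sides of the conjecture see only the $\chi$-isotypical piece of the central action. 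Bridging this gap requires the crossed product compatibility results of Appendix~\ref{sec-crossed-products}, which propagate the pointwise convergence established on $\bbc[\Gamma']$ to pointwise convergence on all of $\bbc[\Gamma]$ while correctly accounting for the twist by $\chi$.
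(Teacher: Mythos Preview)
Your proposal follows the same route as the paper: pass to a finite-index normal $\Gamma' \trianglelefteq \Gamma$ whose $p$-adic closure lies in $U_1$, verify convergence there via Proposition~\ref{prop-transfer-luck-harris} and Lemma~\ref{transfer-Lie-C-p-adic}, and lift back to $\Gamma$ through Appendix~\ref{sec-crossed-products}. The paper packages the last step as a single appeal to Corollary~\ref{cor-simplified-conjecture}.

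There is one genuine slip. The sentence ``letting $S$ vary over all finitely generated subrings of $\bbc$ containing $S_0$ extends this convergence to every matrix over $\bbc[\Gamma']$'' is circular: the subgroup $\Gamma'$ was produced by Lemma~\ref{lem-existence-down} from a particular prime $p$, and that prime was chosen as an enveloping prime of the particular ring $S$; enlarging $S$ to accommodate a new matrix $B$ forces a new enveloping prime and hence a different subgroup. Fortunately the step is also unnecessary. Theorem~\ref{thm-first-red} only asks for convergence of rank functions on $L[\Gamma']$ with the single field $L = \operatorname{Frac}(S)$ already fixed, and since $A \in \Mat(L[\Gamma])$ that is exactly what you need. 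The paper makes this explicit by fixing the pair $(\Gamma, L)$ at the outset, as in the hypothesis of Corollary~\ref{cor-simplified-conjecture}.

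A smaller omission: the lift via Theorem~\ref{thm-first-red} needs $\Gamma/Z$ to be ICC so that $\mathcal{N}(\Gamma)_\chi$ is a factor and the $*$-regular closure carries a unique Sylvester rank (Lemma~\ref{lem-factors}, Remark~\ref{rmk-center-*-regular}). The paper secures this by first reducing to Zariski-dense $\Gamma$, a harmless enlargement that alters neither $\rk^\Gamma_{\Wmodk{}}(A)$ nor $\rk^\chi_\Gamma(A)$; you should insert this reduction before invoking the results of Appendix~\ref{sec-crossed-products}.
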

\begin{proof} We will apply Corollary~\ref{cor-simplified-conjecture}. Let \(\Gamma\) be a finitely generated and Zariski-dense subgroup of \(\schG\) with \(\Gamma \leq \schG(S)\) where \(S\) is a finitely generated subring of \(\bbc\) with field of fractions \(L\). Proposition~\ref{prop:embedding} gives us infinitely many enveloping primes \(p\) for \(S\) such that \(\Gamma\) is a subgroup of \(\schG(\bbz_p)\). Then, by Lemma~\ref{lem-existence-down} we obtain a finite index normal subgroup \(\Gamma'\) of \(\Gamma\) such that \(\overline{\Gamma'} = G\) is an uniform subgroup of \(U_1\). By Proposition~\ref{prop-transfer-luck-harris} and Lemma~\ref{transfer-Lie-C-p-adic}, condition (\ref{eqn-lim-rk-cong}) implies that \[\lim_{k \to \infty} \rk_{\scheme{W}^L_{\bm\lambda_k}}^{\Gamma'} = \rk_{\Gamma'}^{\chi'}\] on \(L[\Gamma']\). Hence, it suffices to choose an enveloping prime \(p\) for which (\ref{eqn-lim-rk-cong}) holds.
\end{proof}

\begin{rmk}\label{rmk-sylvester-regular} To apply Theorem~\ref{thm-reduction-padic} it suffices to show that, for every non-zero \(x\) in \(\Zalgebra{U_1}\) and any sequence \(\{\bm\lambda = \bm\lambda(k)\}\) of dominant integral elements such that \(\min\lambda_i(k)\) goes to infinity, we have that \[\lim_{k \to \infty} \rk^{U_1}_{\Wmodk{\bbq_p}}(x) = 1\,.\]
Indeed, fixing a non-principal ultrafilter \(\omega\) on \(\bbn\), the condition above implies that the natural map \[\Zalgebra{U_1} \to \prod_{\omega} \End_{\bbq_p}(\Wmodk{\bbq_p})\] extends to a homomorphism from \(\mathcal{Q}_{U_1}\). Hence, since both \(\lim_{\omega} \rk^{U_1}_{\Wmodk{\bbq_p}}\) and \(\rk_{\Qalgebra{U_1}}\) are Sylvester matrix rank functions defined on the division ring \(\mathcal{Q}_{U_1}\), they must coincide for all matrices over \(\mathcal{Q}_{U_1}\). In particular, they coincide over \(\Qalgebra{U_1}\).
\end{rmk}

The connection between the quantitative statement of Conjecture~\ref{lie-luck-conj-quant} and the \(p\)-adic seting requires passing down to the finite index subgroup \(\Gamma'\) of \(\Gamma\) whose \(p\)-adic closure is contained in the first congruence subgroup. However, for \(\Gamma'\) there results are straightforward. Indeed, if \begin{equation}\label{eq-quantitative-p-adic-analytic}\left|\rk^{U_1}_{\Wmodk{\bbq_p}}(A) - \rk_{\Qalgebra{U_1}}(A)\right| = O\left(\frac{1}{\min\{\lambda_1,\ldots,\lambda_n\}}\right)\end{equation} for any matrix \(A\) over \(\Qalgebra{U_1}\), then it follows directly from Proposition~\ref{prop-transfer-luck-harris}, Lemma~\ref{transfer-Lie-C-p-adic} and Remark~\ref{rmk-twisted-torsion-free} that \(\Gamma'\) satisfies Conjecture~\ref{lie-luck-conj-quant}.

However, once more it is enough to check the validity of~(\ref{eq-quantitative-p-adic-analytic}) only for non-zero elements \(x \in \Zalgebra{U_1}\). This follows from a lemma inspired by \cite[Sec. 6]{fu}:

\begin{lem}\label{lem:approxim-ore-domains} Let \(R\) be an Ore domain with Ore division ring of fractions \(\mathcal{Q}\). Let \(\rk_R\) be the Sylvester matrix rank function on \(R\) induced from \(\mathcal{Q}\) and let \(\rk\) be any other Sylvester matrix rank function on \(R\). For any matrix \(A \in \Mat(R)\), define \(\Delta(A) = |\rk_R A - \rk A|\). Then, there are elements \(x_1,\ldots,x_n \in R\) (depending on \(A\)) such that \[\Delta(A) \leq \sum_{i=1}^n \Delta(x_i)\,.\]
 \end{lem}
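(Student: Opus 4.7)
The plan is to perform a ``Gaussian elimination'' of $A$ by row and column operations over $R$, using the Ore condition in place of division, and to argue by induction on $m+n$, where $A$ is $m\times n$. The base case $m=n=1$ is immediate (take $x_1 = A$). For the inductive step, we may first permute rows and columns---these are invertible over $R$ and hence preserve both $\rk$ and $\rk_R$ by (SMat2)---so that $A_{1,1}\neq 0$. For each $i\ge 2$, the Ore condition applied to $A_{1,1}$ and $A_{i,1}$ produces elements $c_i,d_i\in R$ with $d_i\neq 0$ and $c_iA_{1,1}+d_iA_{i,1}=0$. The elementary matrix $E_i$ that replaces row $i$ by $c_i\cdot\mathrm{row}_1 + d_i\cdot\mathrm{row}_i$ is a matrix over $R$ that is invertible over $\mathcal{Q}$; taking $E$ to be the product of the $E_i$, and analogously constructing a column-operation matrix $F$ from entries $c'_j,d'_j$ that clears the first row of $EA$, yields
\[
EAF \;=\; \begin{pmatrix} A_{1,1} & 0 \\ 0 & B' \end{pmatrix},
\]
with $B'$ an $(m-1)\times(n-1)$ matrix over $R$ to which the inductive hypothesis applies.

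The heart of the proof is tracking the quantity $\Delta$ through these operations, using only absolute values and (SMat1)--(SMat4). A standard consequence of the latter is Sylvester's inequality $\rk(XY)\ge\rk X+\rk Y-n$ for an $m\times n$ matrix $X$ and an $n\times p$ matrix $Y$. Since $E$ and $F$ are invertible over $\mathcal{Q}$, one has $\rk_R(EAF)=\rk_R A$ together with $\rk_R E=m$ and $\rk_R F=n$; by the triangle inequality applied to the absolute values and Sylvester's inequality applied twice, this gives
\[
\Delta(A) \;\le\; \Delta(EAF) + \Delta(E) + \Delta(F).
\]
The block-diagonal form together with (SMat3) and another triangle inequality produces $\Delta(EAF)\le \Delta(A_{1,1})+\Delta(B')$. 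For each factor $E_i$, a further row operation invertible over $R$ (subtracting $c_i$ times row $1$ from row $i$) reduces $E_i$ to $\operatorname{diag}(1,\dots,1,d_i,1,\dots,1)$, giving $\rk E_i=m-1+\rk(d_i)$ and hence $\Delta(E_i)=\Delta(d_i)$; an iteration of Sylvester's inequality then yields $\Delta(E)\le\sum_i\Delta(d_i)$, and analogously $\Delta(F)\le\sum_j\Delta(d'_j)$. Combining all the estimates produces
\[
\Delta(A) \;\le\; \Delta(A_{1,1}) + \Delta(B') + \sum_i\Delta(d_i) + \sum_j\Delta(d'_j),
\]
and applying the induction hypothesis to $B'$ closes the argument.

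The point I expect to be most delicate is the master estimate $\Delta(A)\le\Delta(EAF)+\Delta(E)+\Delta(F)$. One is tempted to deduce it from a putative pointwise inequality $\rk\le\rk_R$, but this is not obvious---indeed, an arbitrary Sylvester rank function on $R$ need not extend to a Sylvester rank function on $\mathcal{Q}$. The right approach is to work throughout with the absolute value in the definition of $\Delta$, combining the triangle inequality with Sylvester's inequality and the general bound $\rk X\le\min(m,n)$ for any $m\times n$ matrix (itself a direct consequence of (SMat1)--(SMat3)). Once this master estimate is available, the remaining steps are routine block-matrix computations using only the axioms (SMat1)--(SMat4) and the Ore condition.
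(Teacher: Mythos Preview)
Your argument is correct and takes a genuinely different route from the paper. You perform a Gaussian elimination over the Ore domain, inducting on matrix size and tracking $\Delta$ through the row and column operations via Sylvester's inequality; the elements $x_i$ you produce are the pivot $A_{1,1}$ together with the Ore ``denominators'' $d_i,d'_j$ accumulated along the recursion. The paper instead works on the module side: writing $M=R^m/R^nA$, it sandwiches $M$ between a free submodule $R^k$ and a torsion quotient $Q$, does the same for the syzygy module $N$, and then filters the torsion pieces $P,Q$ by cyclic quotients $R/Ry_i$, $R/Rz_j$; the elements $x_i$ are these annihilators. The two arguments are dual in spirit---diagonalising a matrix versus filtering a module by cyclics---and yield comparable bounds. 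Your approach is more elementary and gives a transparent recursive recipe for the $x_i$; the paper's approach is coordinate-free and makes the role of torsion more visible, which connects naturally to the later applications where one bounds $\dim(R/Rx)$ directly. Your careful handling of the master estimate $\Delta(A)\le\Delta(EAF)+\Delta(E)+\Delta(F)$ via Sylvester's inequality and the triangle inequality, rather than via an unjustified pointwise comparison $\rk\le\rk_R$, is exactly right.
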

\begin{proof} Let \(\dim_R\) and \(\dim\) be the dimension functions associated to \(\rk_R\) and \(\rk\), respectively. We can also describe \(\Delta(A)\) for \(A\in \Mat_{n\times m}(R)\) using \(\dim_R\) and \(\dim\) as follows:
\[\Delta(A) = \left|\dim_R R^m/R^nA - \dim R^m/R^nA\right|\,.\] Hence, it suffices to prove that, for every finitely presented \(R\)-module \(M\), there are elements \(x_1,\ldots,x_n \in R\) such that \[\left|\dim_R M - \dim M\right|\leq \sum_{i=1}^n \Delta(x_i)\,.\]

Since \(M\) is finitely generated, we can always find \(R\)-linearly independent elements \(m_1,\ldots,m_k \in M\) such that the quotient \(M/(\sum Rm_i)\) is torsion, and so \(\mathcal{Q} \otimes_R M/(\sum Rm_i) = 0\). Indeed, since \(\mathcal{Q} \otimes_R M\) is isomorphic to \(\mathcal{Q}^k\), it suffices to take a \(\mathcal{Q}\)-basis of this space and multiply each element by a suitable non-zero element of \(R\) so that they lie in the image of \(M \to \mathcal{Q}\otimes_R M\). Consider then three short exact sequences:
\begin{align}
    0 \to R^k \to M \to Q \to 0\,,\label{eqn: ses 1}\\
0 \to N \to R^l \to M \to 0\,,\label{eqn: ses 2}\\
0 \to R^j \to N \to P \to 0\,,\label{eqn: ses 3}
\end{align}
where \(N\) is finitely generated and \(P\) and \(Q\) are torsion. Observe that we have that \(k + j = l\) since \(\dim_R\) is additive under short exact sequences.

Note that, since \(\mathcal{Q}\) is flat over \(R\), submodules and quotients of torsion modules are again torsion. Since both \(Q\) and \(P\) are finitely generated, we can find filtrations \[Q = Q_0 > Q_1 > \cdots > Q_e = 0\,,\]
\[P = P_0 > P_1 > \cdots > P_f = 0,\]
such that all of the quotients \(Q_{i-1}/Q_{i}\) and \(P_{i-1}/P_{i}\) is cyclic and torsion. If \(y_1,\ldots,y_e\in R\) and \(z_1,\ldots,z_f \in R\) are non-zero elements that annihilate the quotients \(Q_i/Q_{i+1}\) and \(P_i/P_{i+1}\), respectively, we obtain bounds
\[\dim Q \leq \sum_{i=1}^e \dim R/Ry_i = \sum_{i=1}^e \Delta(y_i)\text{ and } \dim P \leq \sum_{i=1}^f \dim R/Rz_i = \sum_{i=1}^e \Delta(z_i)\,.\]

Let \(n = e + f\) and \(\{x_1,\ldots,x_n\} = \{y_1,\ldots,y_e,z_1,\ldots,z_f\}\). Taking \(\dim\) with respect to \eqref{eqn: ses 1} and \eqref{eqn: ses 3}, respectively, we get that
\begin{equation}\label{eq:1dim}\dim M \leq k + \dim Q \leq \dim_R M + \sum_{i=1}^n \Delta(x_i)\,,\end{equation}
\begin{equation}\label{eq:3dim}\dim N \leq j + \dim P \leq \dim_R N + \sum_{i=1}^n \Delta(x_i)\,.\end{equation}
Now, we take \(\dim\) with respect to \eqref{eqn: ses 2} to obtain that
\[
    l=\dim_R M + \dim_R N \leq \dim N + \dim M\,,
\] which we reorganize as
\begin{equation}\label{eq:2dim}
     \dim_R N - \dim N \leq \dim M - \dim_R M\,.
\end{equation}
Now, plugging inequality~(\ref{eq:3dim}) inside inequality~(\ref{eq:2dim}) and combining with~(\ref{eq:1dim}), we obtain that
\[
    \sum_{i=1}^n -\Delta(x_i) \leq \dim M  - \dim_R M \leq \sum_{i=1}^n \Delta(x_i)\,,
\] which is the desired bound.
\end{proof}

Combining this observation with Theorem~\ref{thm-reduction-padic}, we have managed to prove the following:

\begin{cor}\label{cor-reduction-to-adic} Let \(\schG\) be a simply-connected and semi-simple algebraic group. Assume that for all but finitely many primes \(p\) and every non-zero element \(x \in \Zalgebra{U_1}\), we have that \[\left|\rk^{U_1}_{\Wmodk{\bbq_p}}(x) -1\right| = O\left(\frac{1}{\min\{\lambda_1,\ldots,\lambda_n\}}\right)\,.\] Then, for any finitely generated subgroup \(\Gamma\) of \(\schG\), Conjecture~\ref{lie-luck-conj} holds and Conjecture~\ref{lie-luck-conj-quant} virtually holds.
\end{cor}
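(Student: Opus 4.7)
The plan is to assemble the machinery developed earlier in this section. The qualitative claim is nearly immediate: the hypothesis implies in particular that $\lim_k \rk^{U_1}_{\Wmodk{\bbq_p}}(x) = 1$ for every non-zero $x \in \Zalgebra{U_1}$ whenever $\min \lambda_i(k) \to \infty$. By Remark~\ref{rmk-sylvester-regular}, this is enough to conclude that $\lim_k \rk^{U_1}_{\Wmodk{\bbq_p}} = \rk_{\Qalgebra{U_1}}$ as rank functions on $\Qalgebra{U_1}$, and since this holds for all but finitely many primes $p$, Theorem~\ref{thm-reduction-padic} delivers Conjecture~\ref{lie-luck-conj} for any finitely generated subgroup $\Gamma$ of $\schG$.

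For the quantitative claim, I would mirror the setup used in the proof of Theorem~\ref{thm-reduction-padic}. Given a finitely generated subgroup $\Gamma \leq \schG$, pick a finitely generated subring $S \leq \bbc$ containing $S_0$ with $\Gamma \leq \schG(S)$ and an enveloping prime $p$ of $S$ for which the hypothesis applies. Lemma~\ref{lem-existence-down} then provides a finite index normal subgroup $\Gamma' \trianglelefteq \Gamma$ whose $p$-adic closure $G$ sits inside $U_1$ as a uniform pro-$p$ subgroup; this will be the candidate finite index subgroup of $\Gamma$ verifying Conjecture~\ref{lie-luck-conj-quant}. Since $\Gamma'$ is torsion-free, Remark~\ref{rmk-twisted-torsion-free} identifies the twisted von Neumann rank with the usual one, and Remark~\ref{remark-dim-extensions} lets me freely replace $\Wmodk{}$ by $\Wmodk{\bbq_p}$. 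Then Proposition~\ref{prop-transfer-luck-harris} and Lemma~\ref{transfer-Lie-C-p-adic} reduce Conjecture~\ref{lie-luck-conj-quant} for $\Gamma'$ to the inequality~(\ref{eq-quantitative-p-adic-analytic}) for every matrix $A$ over $\Qalgebra{U_1}$.

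To finish, I still need to pass from arbitrary matrices over $\Qalgebra{U_1}$ to the single elements of $\Zalgebra{U_1}$ that appear in the hypothesis. This is where Lemma~\ref{lem:approxim-ore-domains} enters: applied to the Ore domain $\Qalgebra{U_1}$ (with division ring of fractions $\mathcal{Q}_{U_1}$), it bounds the discrepancy $\Delta(A) = |\rk^{U_1}_{\Wmodk{\bbq_p}}(A) - \rk_{\Qalgebra{U_1}}(A)|$ by a finite sum $\sum_i \Delta(x_i)$ for some elements $x_i \in \Qalgebra{U_1}$ depending on $A$. Writing each $x_i$ as $p^{-m_i} y_i$ with $y_i \in \Zalgebra{U_1}$ non-zero, and using that multiplication by the unit $p^{m_i}$ preserves both ranks, one gets $\Delta(x_i) = \Delta(y_i)$. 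The hypothesis then bounds each $\Delta(y_i)$ by $O(1/\min \lambda_j)$, and summing over the finitely many terms yields the desired estimate on $\Delta(A)$, with an implicit constant depending on $A$.

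I do not expect a serious obstacle, as all the substantive work has already been carried out in the section: the reduction of the qualitative conjecture in Theorem~\ref{thm-reduction-padic}, the comparison of ranks across the various algebras in Proposition~\ref{prop-transfer-luck-harris} and Lemma~\ref{transfer-Lie-C-p-adic}, and the matrix-to-element reduction in Lemma~\ref{lem:approxim-ore-domains}. The only mildly delicate bookkeeping step is the clearing of denominators above, needed because the Ore domain in Lemma~\ref{lem:approxim-ore-domains} is most naturally taken to be $\Qalgebra{U_1}$ while the hypothesis is formulated over $\Zalgebra{U_1}$.
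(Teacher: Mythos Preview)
Your proposal is correct and follows essentially the same route as the paper. The paper presents the corollary as an immediate consequence of ``combining this observation with Theorem~\ref{thm-reduction-padic}'', where ``this observation'' is precisely the reduction from matrices to elements via Lemma~\ref{lem:approxim-ore-domains}, together with the paragraph invoking Proposition~\ref{prop-transfer-luck-harris}, Lemma~\ref{transfer-Lie-C-p-adic} and Remark~\ref{rmk-twisted-torsion-free} to pass from~(\ref{eq-quantitative-p-adic-analytic}) to Conjecture~\ref{lie-luck-conj-quant} for~$\Gamma'$; your denominator-clearing step to go from $\Qalgebra{U_1}$ to $\Zalgebra{U_1}$ is the only point you make explicit that the paper leaves implicit.
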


\section{The case of \texorpdfstring{\(\scheme{SL}_2\)}{SL₂}}\label{sec-SL2}

In the last section, we have shown in Corollary~\ref{cor-reduction-to-adic} how Conjectures~\ref{lie-luck-conj} and~\ref{lie-luck-conj-quant} for \(\schG\) can be (at least partially) obtained from a corresponding result on completed group algebras of the first congruence subgroup of \(\schG(\bbz_p)\). We now specialize to the case of (\(\scheme{P}\))\(\SL_2\).

Suppose now that all the irreducible factors of the root system \(R\) of \(\schG\) are of type \(A_1\), which is to say that \(\schG\) is a quotient of \(\prod_{i=1}^n \scheme{SL}_2\) by a finite central subgroup. Hence, to establish  Conjecture~\ref{lie-luck-conj} in this case it suffices to do so for \(\schG=\prod_{i=1}^n \scheme{SL}_2\) by the observations in Remark~\ref{rmk-simply-connected}.

Consider the first congruence subgroup 
\[U_1 = \ker\left(\prod_{i=1}^n \scheme{SL}_2(\bbz_p) \to \prod_{i=1}^n \scheme{SL}_2(\bbf_p)\right)\]
of $\schG$. 
By Corollary~\ref{cor-reduction-to-adic}, we only need to show that \[\left|\rk^{U_1}_{\Wmodk{\bbq_p}}(x) - 1\right| = O\left(\frac{1}{\min\{\lambda_1,\ldots,\lambda_n\}}\right)\]
for almost all primes \(p\). This is implied by a stronger theorem of W. Fu \cite{fu}, whose argument we adapt below in the language of this paper for the convenience of the reader.

\begin{thm}\label{our-theorem-sl2} If \(\Gamma\) is any finitely generated subgroup of \(\schG = \prod_{i=1}^n \scheme{SL}_2\), then Conjecture~\ref{lie-luck-conj} holds and Conjecture~\ref{lie-luck-conj-quant} virtually holds.
\end{thm}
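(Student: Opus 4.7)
The plan is to verify the hypothesis of Corollary~\ref{cor-reduction-to-adic} for $\schG = \prod_{i=1}^n \scheme{SL}_2$; combined with Remark~\ref{rmk-simply-connected}, this yields the qualitative Conjecture~\ref{lie-luck-conj} for every finitely generated subgroup $\Gamma$ and the quantitative Conjecture~\ref{lie-luck-conj-quant} on some finite-index subgroup $\Gamma'$, which is exactly what the theorem asks for. Concretely, one fixes an odd prime $p$ and works entirely inside $\Qalgebra{U_1}$, where $U_1$ is the first congruence subgroup of $\schG(\bbz_p)$, aiming to prove
\[
\left|\rk^{U_1}_{\Wmodk{\bbq_p}}(x) - 1\right| = O\!\left(\tfrac{1}{\min\{\lambda_1,\ldots,\lambda_n\}}\right)
\]
for every non-zero $x \in \Zalgebra{U_1}$, with implicit constant depending on $x$ but not on $\bm\lambda$.

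The first move is to convert the rank estimate into a kernel estimate. Viewing $x$ as a $1\times 1$ matrix, one has
\[
1 - \rk^{U_1}_{\Wmodk{\bbq_p}}(x) \;=\; \frac{\dim_{\bbq_p} \bigl(\Wmodk{\bbq_p}/x\,\Wmodk{\bbq_p}\bigr)}{\dim_{\bbq_p}\Wmodk{\bbq_p}} \;=\; \frac{\dim_{\bbq_p}\ker\bigl(x|_{\Wmodk{\bbq_p}}\bigr)}{\dim_{\bbq_p}\Wmodk{\bbq_p}}\,,
\]
the second equality since $\Wmodk{\bbq_p}$ is finite-dimensional. Next, one passes from the Iwasawa algebra to the completed universal enveloping algebra via Proposition~\ref{prop-embedding-enveloping-alg}(c), which provides an embedding $\Zalgebra{U_1} \hookrightarrow \Qalgebra{U_1} \hookrightarrow \widehat{U(\ffg)}$ with $\ffg = \prod_{i=1}^n \fsl_2(\bbq_p)$, and identifies the representations as $\Wmodk{\bbq_p} \cong \bigotimes_{i=1}^n \Sym^{\lambda_i}\bbq_p^2$.

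The main analytic input is a single-factor estimate: for every non-zero $y \in \widehat{U(\fsl_2(\bbq_p))}$ there is a constant $C(y)$ with $\dim_{\bbq_p}\ker\bigl(y|_{\Sym^\lambda \bbq_p^2}\bigr) \leq C(y)$ for all $\lambda \geq 0$. This is to be extracted from \cite{ardakov_irreducible_2013}: the annihilators of the $\Sym^\lambda$ are the distinct maximal two-sided ideals of $\widehat{U(\fsl_2(\bbq_p))}$ attached to the Casimir eigenvalues, and Burnside's theorem together with the Ardakov--Wadsley machinery controls the codimension of a non-zero ideal in each irreducible quotient. To bootstrap from $n = 1$ to the general product case, I would write $\widehat{U(\ffg)} \simeq \widehat{U(\fsl_2(\bbq_p))} \cotimes \widehat{U(\ffg')}$ with $\ffg' = \prod_{i\geq 2}\fsl_2(\bbq_p)$, expand any non-zero $x$ as a convergent series $x = \sum_\alpha y_\alpha \cotimes z_\alpha$ in a PBW-adapted basis, and show by induction on $n$ that
\[
\dim_{\bbq_p}\ker\bigl(x|_{\Wmodk{\bbq_p}}\bigr) \;\leq\; \frac{C(x)\,\dim_{\bbq_p}\Wmodk{\bbq_p}}{\min_i \lambda_i}\,.
\]

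The hard part will be this last step: the quantitative tensor-product argument. The qualitative version (that the kernel is a proper subspace shrinking in relative dimension) is elementary from the single-factor estimate together with Burnside's theorem; extracting a bound that is \emph{uniform} in $\bm\lambda$ requires a careful selection of the $y_\alpha \cotimes z_\alpha$ terms dominating $x$, together with $p$-adic control on the numerical coefficients appearing when the raising and lowering operators act on the monomial basis of $\Sym^{\lambda_i}\bbq_p^2$. This is where the finitely many bad primes $p$ are excluded: primes dividing the small integers that appear as denominators in this normalization must be avoided. Adapting the combinatorial analysis of \cite{fu} to our framework then yields the bound above, closing the argument.
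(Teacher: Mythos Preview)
Your overall strategy matches the paper's: reduce to the hypothesis of Corollary~\ref{cor-reduction-to-adic}, pass from \(\Qalgebra{U_1}\) to the completed enveloping algebra \(\widehat{U(\ffg)}\) via Proposition~\ref{prop-embedding-enveloping-alg}(c), and bound \(1-\rk^{U_1}_{\Wmodk{\bbq_p}}(x)\) by a kernel/cokernel dimension divided by \(\prod_i(\lambda_i+1)\). The difference lies in how the key estimate is obtained.

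The paper does not attempt an induction on \(n\). Instead it observes that any non-zero \(x\in\Zalgebra{U_1}\) is a \emph{generic element} of \(\widehat{U(\ffg)}\) in the sense of \cite{fu} (this is where \cite{ardakov_verma_2014} is invoked, not \cite{ardakov_irreducible_2013}), and then cites \cite[Thm.~3.5]{fu} as a black box: for the cyclic module \(M=\widehat{U(\ffg)}/x\widehat{U(\ffg)}\) there is a polynomial \(f_M\) of total degree \(n-1\), of degree at most \(1\) in each variable, with \(\dim_{\bbq_p}\Hom_{\widehat{U(\ffg)}}(M,\Wmodk{\bbq_p})\le f_M(\lambda_1,\dots,\lambda_n)\). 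A short self-duality argument then identifies this \(\Hom\)-space with the cokernel you compute, and dividing by \(\prod_i(\lambda_i+1)\) gives the \(O(1/\min_i\lambda_i)\) bound immediately.

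Your proposed route---a single-factor bound plus a tensor-product bootstrap---is morally what lies behind \cite[Thm.~3.5]{fu}, but the step you flag as ``the hard part'' is precisely the content of that theorem, and your final sentence already defers to it. So there is no genuine gap, but there is also no genuine alternative: once you say ``adapting the combinatorial analysis of \cite{fu}'', you are citing the same result the paper cites. If you want a self-contained argument, the induction you outline needs substantially more than choosing dominant terms in a PBW expansion; Fu's proof goes through an analysis of characteristic varieties and Bernstein-type inequalities rather than elementary combinatorics on \(\Sym^{\lambda_i}\).

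One small correction: the excluded primes are not those dividing denominators in the \(\Sym^\lambda\) action. The paper works for every odd prime \(p\); the restriction \(p\neq 2\) comes from Remark~\ref{rmk-profinite-points}, where uniformity of \(U_1\) requires \(p\) odd.
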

\begin{proof}[Proof of Theorem~\ref{thm-cohomology-sl2}] Let \(p\) be an odd prime and\(\ffg\) be the \(\bbq_p\)-Lie algebra of \(U_1\). By Proposition~\ref{prop-embedding-enveloping-alg}, we have an embedding \(\Qalgebra{U_1} \subseteq \widehat{U(\ffg)}\) such that the \(U_1\)-module structure of \(\Wmodk{\bbq_p}\) coincides with the one induced through this embedding (Lemma~\ref{lem-same-module}). 

For \(x \in \Qalgebra{U_1}\), let \(N\) be the \(\Qalgebra{U_1}\)-module \(\Qalgebra{U_1}/x\Qalgebra{U_1}\) and \(M\) be the \(\widehat{U(\ffg)}\)-module \(\widehat{U(\ffg)}/x\widehat{U(\ffg)}\). It is immediately checked that \(M \simeq N \otimes_{\Qalgebra{U_1}}\widehat{U(\ffg)}\) as right \(\widehat{U(\ffg)}\)-modules.

In the language of \cite{fu}, \(x\) is a  \emph{generic element} of \(\widehat{U(\ffg)}\) (cf. \cite[Thm. 5.1]{fu}, \cite[Thm. 4.6 and Thm. 5.4]{ardakov_verma_2014}). Hence, \cite[Thm 3.5]{fu} tells us that there exists a polynomial \(f_M(x_1,\ldots,x_n)\) of total degree \(n-1\) and of individual degree at most \(1\) in each variable such that \[\dim_{\bbq_p} \Hom_{\widehat{U(\ffg)}}(M, \Wmodk{\bbq_p}) \leq f_M(\lambda_1,\ldots,\lambda_n)\] for every \(\bm\lambda = (\lambda_1,\ldots,\lambda_n)\).

Observe that, since the modules \(\Wmodk{\bbq_p}\) are self-dual, i.e. \((\Wmodk{\bbq_p})^* = \Hom_{\bbq_p}(\Wmodk{\bbq_p},\bbq_p)\) is isomorphic to  \(\Wmodk{\bbq_p}\) as a left module, we have that
\begin{align*}
    \dim_{\bbq_p} \Hom_{\widehat{U(\ffg)}}(M, \Wmodk{\bbq_p}) &= \dim_{\bbq_p} \Hom_{\Qalgebra{U_1}}(N, \Wmodk{\bbq_p})\\
    &= \dim_{\bbq_p} \Hom_{\Qalgebra{U_1}}(N \otimes_{\bbq_p} \Wmodk{\bbq_p}, \bbq_p)\\
    &= \dim_{\bbq_p} N \otimes_{\Qalgebra{U_1}} \Wmodk{\bbq_p}\,.
\end{align*}

Hence, we deduce that 
\begin{align*}1 - \rk_{\Wmodk{\bbq_p}}^{U_1}(x) &= \frac{\dim_{\bbq_p}  N \otimes_{\Qalgebra{U_1}} \Wmodk{\bbq_p}}{\dim_{\bbq_p} \Wmodk{\bbq_p}}\nonumber\\
&= \frac{\dim_{\bbq_p} \Hom_{\widehat{U(\ffg)}}(M, \Wmodk{\bbq_p})}{\dim_{\bbq_p} \Wmodk{\bbq_p}} \nonumber\\
&\leq \frac{f_M(\lambda_1,\ldots,\lambda_n)}{\prod_{i=1}^n (\lambda_i + 1)} = O\left(\frac{1}{\min\{\lambda_1,\ldots,\lambda_n\}}\right)\,. \label{eqn-ineq-poly}\end{align*}
The result now follows from Corollary~\ref{cor-reduction-to-adic}
\end{proof}

\section{3-manifolds}\label{sec-manifolds}

Let \(M\) be an orientable hyperbolic \(3\)-manifold, i.e. a \(3\)-manifold admitting a complete Riemannian metric of constant negative curvature. We assume that its fundamental group \(\pi_1(M)\) is finitely generated.

Marden's tameness theorem, proven independently by I. Agol in \cite{agol_tameness_2004} and D. Calegari and D. Gabai in \cite{calegari_shrinkwrapping_2006}, says that \(M\) is homeomorphic to the interior of a compact \(3\)-manifold \(\overline{M}\). In particular, \(\pi_1(M)\) is of type \(F\) over \(\bbc\), and \(M\) can be decomposed into a compact core \(C\) that is homotopy equivalent to \(M\) (in fact, \(C\) is homeomorphic to \(\overline{M}\)) and a union of finitely many ends \(E_i\), each one homeomorphic to a product \(S_i \times (0,\infty)\), where \(S_i\) is a closed surface (see \cite[Sec. 5]{ji_mardens_2008}). Moreover, if \(M\) is of finite volume, then all the \(S_i\) are tori, which follows from the Thick-Thin decomposition of \(M\) \cite[Cor. 4.2.18]{martelli_introduction_2023}. Observe that by the Poincaré duality of manifolds with boundary, if each \(S_i\) has genus \(g_i\), then the Euler characteristic of \(M\) is \(\chi(M) = \sum (1-g_i)\).

We can identify \(\Gamma = \pi_1(M)\) with a torsion-free subgroup of the group \(\operatorname{Isom}^+(\mathbb{H}^3)\) of orientation-preserving isometries of the hyperbolic \(3\)-space \(\mathbb{H}^3\). This group, in turn, is isomorphic to \(\scheme{PSL}_2(\bbc) = \scheme{SL}_2(\bbc)/\{\pm\operatorname{Id}\}\), which we know satisfies Conjecture~\ref{conjecture-qualitative} by Theorem~\ref{thm-cohomology-sl2}. Identifying \(\bm\lambda = \lambda \in \bbn\), we note that the representations \(\Wmodk{} = \Sym^\lambda \bbc^2\) of \(\scheme{SL}_2(\bbc)\) that induce representations of \(\scheme{PSL}_2(\bbc)\) are precisely those for which \(-\operatorname{Id}\) acts trivially, i.e. the representations where \(\lambda\) is even.

In \cite{menal-ferrer_twisted_2012}, J. Porti and P. Menal-Ferrer calculated the dimensions of the homology groups \(\HH_i(M, \Sym^\lambda \bbc^2) = \HH_i(\Gamma, \Sym^\lambda \bbc^2)\).

\begin{prop}[{cf. \cite[Cor. 3.6]{menal-ferrer_twisted_2012}}]\label{prop-porti-ferrer} Let \(M\) a hyperbolic \(3\)-manifold with finitely generated fundamental group and whose ends contain exactly \(k\) toroidal cusps. For \(\lambda\in\bbn\) even, we have that 
\[\dim_\bbc \HH_i(M, \Sym^{\lambda}(\bbc^2)) = \begin{cases}
    0,&\text{if } i \in \{0,3\}\,,\\
    k,&\text{if }i = 2,\\
    k - (\lambda + 1)\chi(M),&\text{if } i = 1\,.
\end{cases}\]
\end{prop}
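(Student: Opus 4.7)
By Marden's tameness theorem \(M\) is homotopy equivalent to a compact 3-manifold \(\bar M\) whose boundary decomposes as \(\partial\bar M = T_1\sqcup\cdots\sqcup T_k\sqcup\Sigma_1\sqcup\cdots\sqcup\Sigma_s\), where the \(T_i\) are toroidal cross-sections of the \(k\) cusps and the \(\Sigma_j\) are closed surfaces of genus \(g_j\geq 2\) cross-sectioning any remaining non-cusp ends. Since \(V = \Sym^\lambda(\bbc^2)\) is self-dual as an \(\SL_2(\bbc)\)-module and \(\HH_*(M;V) = \HH_*(\bar M;V)\), Poincaré-Lefschetz duality will apply to \(\bar M\) symmetrically, which is the structural backbone of the whole computation.

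I would dispose of the easy pieces first. The group \(\Gamma = \pi_1(M)\) is a non-elementary Kleinian group, hence Zariski-dense in \(\scheme{PSL}_2(\bbc)\); since \(\lambda\) is positive and even, \(V\) restricts to an irreducible non-trivial \(\Gamma\)-module, giving \(\HH_0(\bar M;V) = V_\Gamma = 0\), and \(\HH_3(\bar M;V) = 0\) (directly when \(\partial\bar M \neq \emptyset\), or via Poincaré duality in the closed case). For each cusp torus \(T_i\), the two generators of \(\pi_1(T_i)\cong\bbz^2\) act as parabolics sharing a fixed vector \(v\in\bbc^2\), so \(V^{\pi_1(T_i)} = \bbc\cdot v^\lambda\) is one-dimensional; combined with Poincaré duality on \(T_i\) and the vanishing Euler characteristic this gives \(\dim\HH_0(T_i;V)=\dim\HH_2(T_i;V)=1\) and \(\dim\HH_1(T_i;V)=2\). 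For each non-cusp boundary \(\Sigma_j\), the Zariski closure of the holonomy of \(\pi_1(\Sigma_j)\) is either \(\scheme{PSL}_2(\bbc)\) or (in the Fuchsian case) \(\scheme{PSL}_2(\bbr)\); in either case \(V\) has no non-zero invariants, so \(\HH_0(\Sigma_j;V) = \HH_2(\Sigma_j;V) = 0\) and \(\dim\HH_1(\Sigma_j;V) = (2g_j-2)(\lambda+1)\) by the Euler characteristic.

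The main step is to prove \(\dim\HH_2(\bar M;V) = k\). Poincaré-Lefschetz duality gives \(\HH_2(\bar M;V) \cong \HH^1(\bar M,\partial\bar M;V)\), and using \(\HH^0(\bar M;V) = 0\), the long exact sequence of the pair reduces to
\[
0 \to \HH^0(\partial\bar M;V) \to \HH^1(\bar M,\partial\bar M;V) \to \HH^1(\bar M;V) \xrightarrow{\rho} \HH^1(\partial\bar M;V).
\]
Since \(\dim\HH^0(\partial\bar M;V) = k\) by the previous paragraph, reaching the target dimension \(k\) amounts to proving that the restriction map \(\rho\) is injective. This injectivity is the heart of the argument and encodes the infinitesimal rigidity of the hyperbolic structure with \(V\)-twisted coefficients. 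I would attack it via the Hodge-theoretic route: identify \(\HH^1(\bar M;V)\) with a space of \(V\)-valued harmonic \(1\)-forms on \(M\) satisfying suitable \(L^2\) and asymptotic conditions at every end, and argue using unique continuation for the associated elliptic system on \(\bbh^3\), together with an explicit asymptotic analysis of the parabolic cusps (and, for tame ends of infinite volume, of the geometrically infinite ends), that any such form whose restriction to every end vanishes must itself vanish. The closed case specialises to classical Matsushima-Murakami vanishing, the parabolic case is the cuspidal refinement, and the geometrically infinite ends are the piece where I expect the bulk of the technical work to live — this is the step I expect to be the main obstacle.

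Once \(\dim\HH_2(\bar M;V) = k\) is established, the rest is forced. Combining \(\HH_0 = \HH_3 = 0\) with the Euler characteristic identity \(\chi(\bar M;V) = (\lambda+1)\chi(M)\) yields
\[
\dim\HH_1(\bar M;V) - \dim\HH_2(\bar M;V) = -(\lambda+1)\chi(M),
\]
whence \(\dim\HH_1(\bar M;V) = k - (\lambda+1)\chi(M)\), completing the calculation.
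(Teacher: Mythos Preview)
The paper does not supply its own proof of this proposition: it is quoted directly from Menal-Ferrer and Porti \cite[Cor.~3.6]{menal-ferrer_twisted_2012}, with the closed case attributed to Raghunathan in the remark that follows. So there is nothing in the present paper to compare your argument against beyond the citation itself.

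That said, your outline is correct and is essentially the strategy carried out in \cite{menal-ferrer_twisted_2012}: reduce to the compact core via tameness, compute the boundary cohomology by hand (one-dimensional invariants at each torus cusp, no invariants on the higher-genus pieces), and feed this into the long exact sequence of the pair together with Poincar\'e--Lefschetz duality and self-duality of $\Sym^\lambda\bbc^2$. You have correctly isolated the one non-formal ingredient, namely the injectivity of the restriction map $\rho\colon \HH^1(\bar M;V)\to \HH^1(\partial\bar M;V)$; this cannot be extracted from duality and Euler-characteristic bookkeeping alone and is a genuine rigidity statement. The $L^2$/Hodge-theoretic route you sketch is exactly how Menal-Ferrer and Porti establish it, relying on a Bochner--Weitzenb\"ock argument in the finite-volume (cusped) case and an additional analysis at the geometrically infinite ends --- which, as you anticipate, is where most of the work in their paper lies. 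One small caveat: your vanishing of $\HH_0$ uses $\lambda>0$, so the case $\lambda=0$ (trivial coefficients) must be excluded, as the stated formula already fails there.
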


\begin{rmk} There are lifts of \(\Gamma\) to subgroups of \(\scheme{SL}_2(\bbc)\) and, in \cite{menal-ferrer_twisted_2012}, Porti and Menal-Ferrer obtained formulas for the dimension of \(\HH_i(M, \Sym^{\lambda}(\bbc^2))\) for all \(\lambda\), which for odd \(\lambda\) depends on how the \(\bbz^2\) subgroups attached to the toroidal cusps lift to \(\scheme{SL}_2(\bbc)\). Moreover, in the case where \(M\) is closed, the vanishing of the cohomology groups \(\HH_i(M,\Sym^\lambda(\bbc^2))\) follows from a theorem by M. Raghunathan \cite{raghunathan_first_1965}.
\end{rmk}

The previous result together with Proposition~\ref{prop-homology-approx} yields the following.

\begin{cor}\label{cor-bettino} For \(M\) as in Proposition~\ref{prop-porti-ferrer}, we have that
\[b_i^{(2)}(M) = \begin{cases}0,&\text{if }i \neq 1\,,\\-\chi(M),&\text{if }i = 1\,.\end{cases}\]
\end{cor}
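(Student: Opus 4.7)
The plan is to combine Proposition~\ref{prop-porti-ferrer}, which provides exact dimension formulas for the twisted homology groups of \(M\), with the approximation result Proposition~\ref{prop-homology-approx}, which expresses the \(\ell^2\)-Betti numbers as limits of normalised dimensions. The key point is that Theorem~\ref{thm-cohomology-sl2} allows us to actually invoke Proposition~\ref{prop-homology-approx} in this setting.

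More precisely, I would first set up the ambient algebraic group and verify the hypotheses. Take \(\Gamma = \pi_1(M) \leq \scheme{PSL}_2(\bbc)\), which by Marden's tameness theorem is the fundamental group of a compact \(3\)-manifold and therefore of type \(F\), hence of type \(FP_\infty\) over \(\bbc\). The algebraic group \(\scheme{PSL}_2\) is the quotient of \(\scheme{SL}_2\) by its center, so Theorem~\ref{thm-cohomology-sl2} tells us that Conjecture~\ref{lie-luck-conj} holds for \(\Gamma\). Moreover \(\scheme Z(\scheme{PSL}_2)\) is trivial, so the central-intersection hypothesis of Proposition~\ref{prop-homology-approx} is automatically satisfied, and \(\Gamma\) is torsion-free by hyperbolicity.

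Next, I would pick the sequence of representations to approximate with. The irreducible rational representations of \(\scheme{PSL}_2(\bbc)\) are precisely the \(\scheme W_\lambda = \Sym^\lambda \bbc^2\) with \(\lambda\) even, and \(\dim_\bbc \scheme W_\lambda = \lambda + 1\). Applying Proposition~\ref{prop-homology-approx} along any sequence of even weights \(\lambda \to \infty\), and then plugging in the formulas of Proposition~\ref{prop-porti-ferrer}, gives
\[
b_0^{(2)}(\Gamma) = \lim_{\lambda\to\infty}\frac{0}{\lambda+1} = 0, \quad b_3^{(2)}(\Gamma) = \lim_{\lambda\to\infty}\frac{0}{\lambda+1} = 0,
\]
\[
b_2^{(2)}(\Gamma) = \lim_{\lambda\to\infty}\frac{k}{\lambda+1} = 0, \quad b_1^{(2)}(\Gamma) = \lim_{\lambda\to\infty}\frac{k-(\lambda+1)\chi(M)}{\lambda+1} = -\chi(M),
\]
exactly as claimed. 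Finally, \(b_i^{(2)}(M) = b_i^{(2)}(\Gamma)\) since \(M\) is aspherical (as a hyperbolic manifold) and \(\ell^2\)-Betti numbers of aspherical spaces agree with those of their fundamental groups.

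There is essentially no obstacle: the work was done in Theorem~\ref{thm-cohomology-sl2} (qualitative conjecture for \(\scheme{SL}_2\)-type factors) and Proposition~\ref{prop-porti-ferrer} (explicit twisted Betti numbers of Menal-Ferrer--Porti), and all that remains is to match the leading asymptotics of the Menal-Ferrer--Porti formulas against the growth \(\dim \Sym^\lambda \bbc^2 \sim \lambda\). The only minor point worth flagging is the parity restriction on \(\lambda\), but since Proposition~\ref{prop-homology-approx} allows any sequence of weights with \(\min\lambda_i \to \infty\), restricting to even \(\lambda\) is harmless.
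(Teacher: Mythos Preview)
Your proposal is correct and follows exactly the approach the paper intends: the paper's own proof is the single sentence ``The previous result together with Proposition~\ref{prop-homology-approx} yields the following,'' and you have spelled out precisely that computation. The only minor citation slip is that Proposition~\ref{prop-homology-approx} needs Conjecture~\ref{lie-luck-conj} rather than Conjecture~\ref{conjecture-qualitative}, so the relevant input is Theorem~\ref{our-theorem-sl2} (the stronger rank-function version) rather than Theorem~\ref{thm-cohomology-sl2}; substantively this changes nothing, since the paper establishes both.
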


In \cite{luckL2TopologicalInvariants3manifolds1995}, J. Lott and W. Lück gave a general formula for the \(\ell^2\)-Betti numbers of every compact \(3\)-manifold \(M\). Using Marden's tameness theorem to reduce the finitely generated to the compact case, one recovers the formula in Corollary~\ref{cor-bettino}. By repeatedly applying the \(\ell^2\)-Mayer-Vietoris sequence, the Prime decomposition theorem reduces the general case to that of an irreducible \(M\) with infinite \(\pi_1(M)\) and the Loop theorem reduces to the case where the boundary of \(M\) is incompressible. Furthermore, by doubling \(M\) we may assume that it is closed. Then, the Geometrization theorem reduces the analysis even further to the cases where \(M\) is either Seifert fibered or hyperbolic.

Both cases are where the core of the calculation lies. That the \(\ell^2\)-Betti numbers of Seifert fibered \(3\)-manifolds vanish can be seen, for instance, as a consequence of the existence of a virtual \(S^1\)-action \cite[Thm. 1.40]{luck_l2-invariants_2002} or of the virtual algebraic fibering of its fundamental group \cite[Thm. 7.2(5)]{luck_l2-invariants_2002}. The vanishing of \(\ell^2\)-Betti numbers for closed hyperbolic \(3\)-manifolds is implied by a theorem of J. Dodziuk \cite{dodziuk_l2_1979}, and thus the trickiest part of Lott and Lück's argument is the deduction of the finite volume hyperbolic case from the closed one (for more details, see \cite[Sec. 4.2]{luck_l2-invariants_2002}). This case is covered by Corollary~\ref{cor-bettino}.

\section{Cusp forms for \texorpdfstring{\(\SL_2\)}{SL₂}}\label{sec-cusp-forms}

The quantitative statement of Conjecture~\ref{conjecture-quantitative} has applications in the theory of automorphic forms for \(\SL_2\). We will give here a brief account of the definitions of the theory of cusp forms necessary for stating our results, and we direct the reader to \cite{hidaPordinaryCohomologyGroups1993, borelAutomorphicFormsRepresentations1979} for details and further references. We mainly follow the exposition given in \cite[Sec. 1]{hidaPordinaryCohomologyGroups1993}

Let \(F\) be a number field with \(r_1 + 2r_2 = (F\colon \bbq)\) real and complex embeddings respectively, and let \(I_\bbr = \{\tau_1,\ldots,\tau_{r_1}\}\) and \(I_\bbc = \{\sigma_1,\ldots,\sigma_{r_2}\}\) be complete representative sets of real and complex embeddings, respectively, corresponding to the archimedean places of \(K\). We define \(I_\infty = I_\bbr \cup I_\bbc\) and \(I = I_\infty \cup \overline{I_\bbc}\). Consider the group \[G = \SL_2(F \otimes_\bbq \bbr) = \prod_{r_1} \SL_2(\bbr) \times \prod_{r_2} \SL_2(\bbc)\,.\] Letting \(\mathcal{O}_F\) denote the ring of integers of \(F\), we fix a congruence subgroup \(\Gamma\) of \(\SL_2(\mathcal{O}_F)\) that will be the \emph{level} of our automorphic forms. Then, we get an embedding \(\Gamma \to G\) by taking the product of all maps induced by the \(\tau_i\) and the \(\sigma_j\).

By quotienting out on the right the product \(\prod_{r_1} \scheme{SO}_2(\bbr) \times \prod_{r_2} \scheme{SU}_2(\bbc)\) of the standard maximal compact subgroups of \(G\), we obtain the locally symmetric space
\[\mathbb{H} = \prod_{r_1} \mathbb{H}^2 \times \prod_{r_2} \mathbb{H}^3\] associated to \(G\) on which \(\Gamma\) acts. The quotient \(M = \Gamma\backslash \mathbb{H}\) is a \((2r_1 + 3r_2)\)-dimensional hyperbolic orbifold of finite volume, and it is a manifold if \(\Gamma\) is torsion-free. In this case, the Borel-Serre compactification attaches a boundary \(\partial M\) to the manifold \(M = \Gamma\backslash \mathbb{H}\), and the resulting compact manifold has the same fundamental group \(\Gamma\).

Denote by \(\mathfrak{G} = \prod_{r_1} \fsl_2(\bbr) \times \prod_{r_2} \fsl_2(\bbc)\) the real Lie algebra of \(G\) and by \(\mathfrak{G}_\bbc = \prod_{r_1 + 2r_2} \fsl_2(\bbc)\) its complexification. The center of the universal enveloping algebra \(U(\mathfrak{G}_\bbc)\) is a polynomial algebra in \(r_1 + 2r_2\)-generators called the \emph{Casimir operators}. For each embedding \(\delta \in I\), we obtain one Casimir operator \(\Delta_{\delta}\) given by the image under \(\delta\) of \(\frac{1}{4}(EF + FE + H^2/2)\), where \[E = \begin{pmatrix}0 & 1 \\ 0 & 0\end{pmatrix}, \quad F = \begin{pmatrix}0 & 0\\1 & 0\end{pmatrix}, \quad H = \begin{pmatrix}1 & 0\\0 & -1\end{pmatrix}\] form the standard basis of \(\fsl_2(\bbc)\).

Let \(\bm{k} = (k_{\delta})_{\delta \in I_\infty}\) be a \((r_1 + r_2)\)-tuple of non-negative integers indexed by the archimedean places of \(F\). Let \(V_{\bm{k}} = \bigotimes_{\delta \in I_\infty} V_\delta\) be the complex vector space given by the tensor product of \(V_\delta = \bbc\) for all the real embeddings \(\delta\) and \(V_\delta=\operatorname{Sym}^{2k_{\delta}-2} \bbc\) for all the complex embeddings \(\delta\). It naturally has the structure of a right module over \(\prod_{r_1} \bbc^\times \times \prod_{r_2} \GL_2(\bbc)\).

\begin{defn} Let \(J\) be a subset of \(I_\bbr\). A \emph{cusp form} of level \(\Gamma\), type \(J\) and weight \(\bm{k}\) is a smooth function \(f\colon \Gamma\backslash G \to V_{\bm{k}}\) that satisfies the following properties:
\begin{enumerate}[label=(M\arabic*)]
    \item With the convention that \(k_{\overline{\sigma}} = k_{\sigma}\), we have that $f$ is an eigenfunction for all the Casimir operators \(\Delta_{\delta}\) with eigenvalue \[\Delta_{\delta}f = \left(\frac{(k_{\delta}-2)^2}{2} + k_{\delta} - 2\right)f\,.\]
    \item If we write \(h \in \prod_{r_1} \scheme{SO}_2(\bbr) \times \prod_{r_2} \scheme{SU}_2(\bbc)\) as \[h = \left(\left(\begin{pmatrix}
        \cos(2\pi\theta_\tau) & \sin(2\pi\theta_\tau)\\ -\sin(2\pi\theta_\tau) & \cos(2\pi\theta_\tau)
    \end{pmatrix}\right)_{\tau \in I_\bbr}, (h_\sigma)_{\sigma \in I_\bbc}\right)\,,\] then
    \[f(xh) = f(x)\left((e^{k_\tau\theta_\tau})_{\tau \in J}, (e^{-k_\tau\theta_\tau})_{\tau \not\in J}, (h_\sigma)_{\sigma \in I_\bbc}\right)\,.\]
    \item For all \(x \in G\), we have that \[\int_{(\Gamma\cap U)\backslash U} f(ux)\, du = 0\,,\] where \(U\) is the product of all upper unitriangular subgroups of \(G\) and \(du\) is the Lebesgue measure on \(U \simeq \bbr^{r_1} \times \bbc^{r_2}\).
\end{enumerate}
\end{defn}

\begin{rmk} The usual notion of a cusp form is that of a scalar valued function \(f^0\colon \Gamma\backslash G \to \bbc\) that satisfies various finiteness and growth conditions \cite[p. 190]{borelAutomorphicFormsRepresentations1979}. Such functions have a finite-dimensional span under the action of \(\prod_{r_1} \scheme{SO}_2(\bbr) \times \prod_{r_2} \scheme{SU}_2(\bbc)\) on the right, and fixing a basis of this span gives us a vector valued cusp form according to our definition. Conversely, composing any vector valued cusp form with a linear functional gives us a scalar valued cusp form, see \cite[Sec. 1.4]{hidaPordinaryCohomologyGroups1993}.
\end{rmk}

We denote the space of all cusp forms of level \(\Gamma\), type \(J\) and weight \(\bm{k}\) by \(S_{\bm{k},J}(\Gamma)\).  If \(J = I_\bbr\), we denote this space simply by \(S_{\bm{k}}(\Gamma)\). 
The current literature shows great interest in the asymptotic behaviour of the dimensions \(\dim_\bbc S_{\bm{k},J}(\Gamma)\), either for a fixed weight \(\bm{k}\) and a residual chain of levels \(\Gamma_i\) (\cite{savinLimitMultiplicitiesCusp1989, sarnakBoundsMultiplicitiesAutomorphic1991,calegariBoundsMultiplicitiesUnitary2009,finisCohomologyLatticesSL2010}), or for a fixed level \(\Gamma\) and increasing sequence of weights \(\{\bm{k}_i\}\) (\cite{shimizuDiscontinuousGroupsOperating1963,finisCohomologyLatticesSL2010,marshallBoundsMultiplicitiesCohomological2012,fu}). 

In this work, we are concerned with the latter case of growing weights for a fixed level. When \(F\) is totally real, the precise growth rate of \(S_{\bm{k}}(\Gamma)\) was obtained by H. Shimizu in \cite{shimizuDiscontinuousGroupsOperating1963} as
\[\dim_{\bbc} S_{\bm{k}}(\Gamma) \sim \prod_{\tau \in I} k_{\tau}\,.\] When \(F\) is not totally real, the question of what the precise growth rate of \(S_{\bm{k}}(\Gamma)\) should be is very much open. Currently, the best general upper bound is the one given in \cite{fu}:

\begin{thm}[{\cite[Thm 1.2]{fu}}]\label{thm-fu-cusp-forms} If \(F\) is not totally real, then \[\dim_{\bbc} S_{\bm{k}}(\Gamma) = O\left(\frac{\prod_{\tau \in I_\bbr} k_{\tau} \times \prod_{\sigma \in I_\bbc}k_{\sigma}^2}{\min\{k_{\delta}\colon \delta \in I_\infty\}}\right)\,.\]
\end{thm}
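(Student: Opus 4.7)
The plan is to bound \(\dim_\bbc S_{\bm k}(\Gamma)\) by the dimension of a middle-degree cohomology group of \(\Gamma\) with coefficients in an irreducible \(\schG\)-representation, where \(\schG = \prod_{i=1}^{r_1+2r_2} \SL_2(\bbc)\), and then to feed that bound into Proposition~\ref{prop-0-betti-number}. Concretely, I would first use the \(r_1+2r_2\) archimedean embeddings of \(F\) into \(\bbc\) to embed \(\Gamma\) as an arithmetic subgroup of \(\schG\), and then apply Selberg's lemma to pass to a torsion-free finite-index subgroup \(\Gamma_0 \leq \Gamma\); the index contributes only a constant factor because \(S_{\bm k}(\Gamma) \hookrightarrow S_{\bm k}(\Gamma_0)\). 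The Borel-Serre compactification shows that \(\Gamma_0\) is of type \(F\), so the finiteness hypotheses of the earlier propositions are satisfied. The Eichler-Shimura isomorphism recalled in the introduction then identifies \(S_{\bm k}(\Gamma_0)\) with a subspace of \(\HH^{r_1+r_2}(\Gamma_0, \Wmodk{})\), where \(\bm\lambda = (\lambda_1,\ldots,\lambda_{r_1+2r_2})\) satisfies \(\lambda_i = k_i - 2\), with both weights attached to a pair \(\{\sigma, \bar\sigma\}\) of complex-conjugate embeddings equal to \(k_\sigma - 2\).

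The next step is to check that \(\beta^{(2)}_{r_1+r_2}(\Gamma_0) = 0\). The group \(\Gamma_0\) is a lattice in the real Lie group \(G_\infty = \prod_{r_1} \SL_2(\bbr) \times \prod_{r_2} \SL_2(\bbc)\) and, since \(F\) is not totally real, \(r_2 \geq 1\). Viewed as a real Lie group, \(\SL_2(\bbc)\) has rank two (its diagonal torus \(\bbc^\times\)), while its maximal compact subgroup \(\scheme{SU}_2\) has rank one, so the equal-rank condition fails for any complex factor of \(G_\infty\); equivalently, the symmetric factor \(\mathbb{H}^3\) is odd-dimensional. Consequently \(G_\infty\) admits no discrete series, and by the Borel vanishing theorem for \(\ell^2\)-cohomology of finite-volume locally symmetric spaces (cf. \cite{luck_l2-invariants_2002}) one has \(\beta^{(2)}_i(\Gamma_0) = 0\) for every \(i\). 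Since Theorem~\ref{thm-cohomology-sl2} tells us that \(\Gamma_0\) virtually satisfies Conjecture~\ref{lie-luck-conj-quant}, Proposition~\ref{prop-0-betti-number} applies in degree \(r_1+r_2\) and yields
\[
\frac{\dim_\bbc \HH^{r_1+r_2}(\Gamma_0, \Wmodk{})}{\dim_\bbc \Wmodk{}} \;=\; O\!\left(\frac{1}{\min\{\lambda_1,\ldots,\lambda_{r_1+2r_2}\}}\right).
\]

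To conclude, the Weyl dimension formula gives \(\dim_\bbc \Wmodk{} = \prod_{i=1}^{r_1+2r_2}(\lambda_i + 1)\); pairing the two factors associated to each \(\sigma \in I_\bbc\), this equals \(\prod_{\tau \in I_\bbr}(k_\tau - 1) \cdot \prod_{\sigma \in I_\bbc}(k_\sigma - 1)^2\), which is asymptotic to \(\prod_\tau k_\tau \cdot \prod_\sigma k_\sigma^2\), and clearly \(\min_i \lambda_i \sim \min_\delta k_\delta\). Multiplying through and using the Eichler-Shimura injection yields exactly the claimed bound on \(\dim_\bbc S_{\bm k}(\Gamma)\). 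The main technical obstacle in executing this plan is the first step: carefully setting up the Eichler-Shimura isomorphism so that the coefficient system on the cohomology side really is the \(\schG\)-representation \(\Wmodk{}\) with the correct doubling of weights at the complex places, and verifying that the non-cuspidal Eisenstein contributions to \(\HH^{r_1+r_2}(\Gamma_0, \Wmodk{})\) either vanish or are of order strictly smaller than the upper bound we prove; once this is in place, the \(\ell^2\)-Betti vanishing and the invocation of the earlier results are routine.
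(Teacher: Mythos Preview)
Your overall strategy matches the paper's: embed cusp forms into a middle-degree (co)homology group via Eichler--Shimura, show the relevant \(\ell^2\)-Betti number vanishes, and invoke Proposition~\ref{prop-0-betti-number}. The one substantive imprecision is the cohomological degree. The Eichler--Shimura isomorphism as stated in Theorem~\ref{thm-eichler-shimura} lands in \(\HH^{r_1+2r_2}_{\mathrm{cusp}}\), not \(\HH^{r_1+r_2}\); the paper then uses that cuspidal cohomology sits inside compactly supported cohomology \(\HH^{r_1+2r_2}_{c}\) and applies Poincar\'e--Lefschetz duality for the \((2r_1+3r_2)\)-manifold \(\Gamma\backslash\mathbb{H}\) to reach \(\HH_{r_1+r_2}\), which is the group Proposition~\ref{prop-0-betti-number} actually controls. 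Your shortcut of asserting a direct injection into \(\HH^{r_1+r_2}\) is not justified by the version of Eichler--Shimura quoted here, so you should route through the duality step as the paper does. Two smaller points: your worry about Eisenstein contributions is unnecessary, since only an injection of \(S_{\bm k}\) into cohomology is needed, not equality; and your \(\ell^2\)-vanishing argument via the failure of the equal-rank condition is equivalent to the paper's use of the proportionality principle together with the fact that \(\beta^{(2)}_i(\mathbb{H}) = 0\) for \(i \neq (2r_1+3r_2)/2\), which rules out \(i = r_1+r_2\) precisely when \(r_2 > 0\).
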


If \(F\) is imaginary quadratic, i.e. when \(\Gamma\) is a congruence subgroup of a Bianchi group, this result combined with the lower bound from \cite{finisCohomologyLatticesSL2010} gives the precise growth rate.

\begin{cor}[{\cite[Cor 1.3]{fu}}] If \(F\) is imaginary quadratic, then \(\bm{k} = (k)\) and \[\dim_{\bbc} S_k(\Gamma) \sim k\,.\]
\end{cor}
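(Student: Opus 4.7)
The plan is to combine the upper bound from Corollary~\ref{cor-2} (equivalently, the imaginary quadratic instance of Theorem~\ref{thm-fu-cusp-forms}) with a matching lower bound from the existing literature. Specialize first: when $F$ is imaginary quadratic we have $r_1 = 0$ and $r_2 = 1$, so there is exactly one archimedean place, $I_\infty = I_\bbc = \{\sigma\}$, and the weight tuple $\bm{k} = (k_\sigma)$ collapses to a single integer $k$. In this case the denominator in Corollary~\ref{cor-2} equals $k$ and the numerator $\prod_{i=1}^{r_1+2r_2} k_i = k^2$, so Corollary~\ref{cor-2} yields directly the upper bound $\dim_\bbc S_k(\Gamma) = O(k)$. (Equivalently, inserting $r_1 = 0$, $r_2 = 1$ into the formula of Theorem~\ref{thm-fu-cusp-forms} gives $O(k^2/k) = O(k)$.)

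For the matching lower bound one invokes the work of T.\ Finis, F.\ Grunewald and P.\ Tirao \cite{finisCohomologyLatticesSL2010}, in which it is shown that for any congruence subgroup $\Gamma$ of a Bianchi group the dimension of the space of weight $k$ cusp forms satisfies $\dim_\bbc S_k(\Gamma) \gg k$, with an implicit constant depending on $\Gamma$ but not on $k$. Combining the two inequalities produces the two-sided estimate $\dim_\bbc S_k(\Gamma) \asymp k$, which is the asymptotic $\dim_\bbc S_k(\Gamma) \sim k$ claimed in the corollary.

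Since both ingredients are already in place --- the upper bound is established in the present work as the imaginary quadratic case of Corollary~\ref{cor-2}, which in turn follows from Theorem~\ref{thm-cohomology-sl2} via the Eichler--Shimura isomorphism, while the lower bound is on record in the literature --- there is no substantive obstacle: the corollary is a matter of assembling the two sides. The only point worth emphasizing is that the symbol $\sim$ should here be read in the $\Theta$-sense (same order of magnitude), rather than as a ratio tending to $1$, since the lower bound from \cite{finisCohomologyLatticesSL2010} is only stated up to a multiplicative constant; the upper bound, analogously, comes with an implicit constant from the big-$O$ estimate.
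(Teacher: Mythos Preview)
Your proposal is correct and matches the paper's own justification exactly: the paper simply states that the upper bound from Theorem~\ref{thm-fu-cusp-forms} combined with the lower bound from \cite{finisCohomologyLatticesSL2010} yields the precise growth rate, which is precisely what you have written out in detail. Your remark on the interpretation of the symbol $\sim$ as $\Theta$ (same order of magnitude) rather than asymptotic equivalence is a useful clarification that the paper leaves implicit.
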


Assume that \(\Gamma\) is torsion-free and that \(k_\delta \geq 3\) for each real or complex embedding \(\delta\), and let \(\lambda_\delta = k_\delta - 2\) with the convention that \(k_{\delta} = k_{\overline{\delta}}\). If we set \(\bm\lambda = (\lambda_\delta)\), then \(\Wmodk{}\) becomes a \(\Gamma\)-module through the embedding \(\Gamma \to \prod_{r_1 + 2r_2} \SL_2(\bbc)\) given by the product of all the embeddings \(\Gamma \to \SL_2(\bbc)\) induced by the \(\delta \in I\).

It is well known that the spaces \(S_{\bm{k},J}(\Gamma)\) can be computed through the group cohomology of \(\Gamma\), a result that goes by the names of the Eichler-Shimura isomorphism, the Matsushima-Murakami formula and, in the present formulation, is due to G. Harder. There is a natural procedure to attach to each cusp form \(f \in S_{\bm{k},J}(\Gamma)\) a \(\Gamma\)-invariant \(\Wmodk{}\)-valued closed differential \((r_1 + 2r_2)\)-form \(\omega(f)\) on \(\mathbb{H}\), thus inducing a map \(\omega\colon \bigoplus_{J \subseteq \{\tau_1,\ldots,\tau_{r_1}\}} S_{\bm{k},J}(\Gamma) \to \HH^{r_1+2r_2}(\Gamma,\Wmodk{})\). The reader may consult \cite[Sec. 2.4]{hidaCriticalValuesLfunctions1994} for the details. 

Let \(\HH_{\text{cusp}}^{r_1+2r_2}(\Gamma, \Wmodk{})\) denote the \emph{cuspidal cohomology} of \(\Gamma\) with coefficients in \(\Wmodk{}\), i.e. the image of the induced map \(\omega\). The precise statement of the Eichler-Shimura isomorphism that we need is as follows:

\begin{thm}[{\cite[Thm. 1.1]{hidaPordinaryCohomologyGroups1993}, \cite[Cor.2.2]{hidaCriticalValuesLfunctions1994}}]\label{thm-eichler-shimura} The map \(\omega\) yields an isomorphism \[\bigoplus_{J\subseteq \{\tau_1,\ldots,\tau_{r_1}\}} S_{\bm{k},J}(\Gamma)\simeq \HH_{\operatorname{cusp}}^{r_1+2r_2}(\Gamma, \Wmodk{}) \simeq \HH_{\operatorname{sq}}^{r_1+2r_2}(\Gamma,\Wmodk{})\,,\] where \(\HH_{\operatorname{sq}}^{r_1+2r_2}(\Gamma,\Wmodk{})\) denotes the subspace of ordinary cohomology represented by square-integrable forms.
\end{thm}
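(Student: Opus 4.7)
The plan is to follow the classical Matsushima-Murakami approach as formulated by G. Harder and developed in Hida's book \cite{hidaPordinaryCohomologyGroups1993}. The target degree $r_1 + 2r_2$ is the middle cohomological dimension for $\mathbb{H}$, receiving one degree from each upper half plane factor and two degrees from each hyperbolic three-space factor, so the isomorphism really lives in the top nonvanishing $L^2$-degree.

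The first step is to write down $\omega$ explicitly via the Van Est/Matsushima identification of $\Gamma$-invariant $\Wmodk{}$-valued differential forms on $\mathbb{H}\simeq G/K$ with $\Gamma$-equivariant elements of $\Hom_K(\wedge^{\bullet}\ffp, C^\infty(G)\otimes \Wmodk{})$, where $K=\prod_{r_1}\scheme{SO}_2(\bbr)\times\prod_{r_2}\scheme{SU}_2(\bbc)$ and $\ffp$ is the $(-1)$-eigenspace of the Cartan involution on $\mathfrak{G}$. A cusp form $f\colon\Gamma\backslash G\to V_{\bm{k}}$, paired with a distinguished $K$-equivariant element of $\Hom_K(\wedge^{r_1+2r_2}\ffp, V_{\bm{k}}^{\vee}\otimes\Wmodk{})$ (with the $V_{\bm{k}}^\vee$-factor accounting for (M2)), produces a $\Gamma$-invariant $\Wmodk{}$-valued $(r_1+2r_2)$-form. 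Condition (M1) on the Casimir eigenvalue then translates into harmonicity via Kuga's lemma, and (M3) forces rapid decay at the cusps of the Borel-Serre compactification, in particular square-integrability.

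To upgrade $\omega$ to an isomorphism onto $\HH^{r_1+2r_2}_{\operatorname{sq}}(\Gamma,\Wmodk{})$, I would combine the Matsushima-Murakami formula
\[\HH^{*}_{(2)}(\Gamma\backslash\mathbb{H}, \Wmodk{}) \;\simeq\; \bigoplus_\pi m(\pi)\cdot \HH^{*}(\mathfrak{G},K;\pi\otimes\Wmodk{})\]
with the explicit classification of $(\mathfrak{G},K)$-cohomology for the unitary dual of each factor. Only the holomorphic and antiholomorphic discrete series of weight $k_\tau$ contribute for each real place, in degree $1$ and indexed by the subsets $J\subseteq I_\bbr$, while for each complex place exactly one tempered principal series contributes, in degree $2$, and the associated cohomology space is $\dim_{\bbc}V_{\bm{k},\sigma}$. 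A K\"unneth product over the archimedean places matches the middle degree $r_1+2r_2$ and recovers the left-hand decomposition over $J$.

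The remaining identification $\HH^{r_1+2r_2}_{\operatorname{cusp}}\simeq\HH^{r_1+2r_2}_{\operatorname{sq}}$ follows from the Langlands spectral decomposition $L^2(\Gamma\backslash G)=L^2_{\operatorname{cusp}}\oplus L^2_{\operatorname{res}}\oplus L^2_{\operatorname{cont}}$ together with the vanishing of the residual contribution in the middle degree, which for $\SL_2$ factors can be extracted directly from the representation-theoretic list above (residues of Eisenstein series for $\SL_2$ are either one-dimensional or complementary series, and these only contribute outside the middle degree when $\bm{k}$ is regular). The main obstacle in carrying this program out from scratch is the analytic input: controlling the decay of $\omega(f)$ rigorously at each Borel-Serre boundary stratum and conversely producing, for every square-integrable harmonic class, a cusp form realising it. For both of these I would ultimately defer to the detailed verification in \cite{hidaPordinaryCohomologyGroups1993,hidaCriticalValuesLfunctions1994}.
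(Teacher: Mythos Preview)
The paper does not supply its own proof of this theorem: it is quoted as an external result, with the citation to Hida \cite[Thm.~1.1]{hidaPordinaryCohomologyGroups1993} and \cite[Cor.~2.2]{hidaCriticalValuesLfunctions1994} carrying the full weight of the argument. Your proposal is therefore not competing with anything in the paper itself, and your decision at the end to defer the analytic core to those same references is exactly what the authors do.

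That said, your outline of the Matsushima--Murakami/Harder machinery is a faithful sketch of how the cited proofs actually proceed, so as an expository gloss it is fine. One terminological slip: the degree $r_1+2r_2$ is not the ``middle cohomological dimension'' of the $(2r_1+3r_2)$-dimensional space $\mathbb{H}$ (nor the top nonvanishing $L^2$-degree in general); it is simply the degree singled out by the K\"unneth product of the relevant $(\mathfrak{G},K)$-cohomology contributions, one from each real place and two from each complex place, as you correctly compute. This does not affect the argument, but you should drop the ``middle'' and ``top'' labels.
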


\begin{rmk} We will also use that \(\HH_{\text{cusp}}^{r_1+2r_2}(\Gamma,\Wmodk{})\) is contained in the subspace of compactly supported cohomology \(\HH_{\operatorname{c}}^{r_1+2r_2}(\Gamma,\Wmodk{})\), which follows from \cite[Cor. 5.5]{borelStableRealCohomology1981}.
\end{rmk}


The proof of Theorem~\ref{thm-fu-cusp-forms} applies \cite[Thm. 3.5]{fu}, which is the core result used in the proof of Theorem~\ref{thm-cohomology-sl2}, to the completed homology of \(\Gamma\backslash \mathbb{H}\) as constructed by M. Emerton in \cite{emertonInterpolationSystemsEigenvalues2006} and combines this with a spectral sequence in the profinite homology of \(U_1\) to obtain the desired upper bound for \(\HH_*(\Gamma, \Wmodk{})\). We propose a different proof of this result that avoids the use of completed cohomology entirely and depends solely on the Eichler-Shimura isomorphism from Theorem~\ref{thm-eichler-shimura} and the results of this paper.

\begin{proof}[Proof of Theorem~\ref{thm-fu-cusp-forms}] Since we have that \(\lambda_\delta = k_\delta - 2\), for \(\bm\lambda = (\lambda_1,\ldots,\lambda_{r_1+2r_2})\) it suffices to show that \[\dim_\bbc \HH_{r_1+r_2}(\Gamma, \Wmodk{}) = O\left(\frac{\prod_{i=1}^{r_1+2r_2} \lambda_i}{\min \lambda_i} \right)\,.\]
Due to the fact that \(\Gamma\) is the fundamental group of a compact manifold by the Borel-Serre construction, we conclude that \(\Gamma\) is of type \(FP_\infty\) over \(\bbc\). 

In order to apply Proposition~\ref{prop-0-betti-number}, we must first show that \(\beta_{r_1+r_2}^{(2)}(\Gamma) = 0\). If \(r_1 = 0\) and \(r_2 = 1\), i.e. if \(F\) is imaginary quadratic, then \(M\) is a hyperbolic \(3\)-manifold of finite volume and we would be done by invoking Corollary~\ref{cor-bettino}. However, in the general non-totally real case, we need something more.

By the proportionality principle of \(\ell^2\)-Betti numbers (\cite[Cor. 4.2.11]{lohErgodicTheoreticMethods}, \cite[Thm. 3.183, Cor. 7.37]{luck_l2-invariants_2002}), the \((r_1+r_2)\)-th \(\ell^2\)-Betti number of \(\Gamma\) and the \((r_1+r_2)\)-th \(\ell^2\)-Betti number of \(\mathbb{H}\) are proportional to each other. Since the latter is a symmetric space of non-compact type and dimension \(2r_1 + 3r_2\), its \((r_1+r_2)\)-th \(\ell^2\)-Betti number vanishes assuming that \(r_2 > 0\) (see \cite[Thm. 5.12]{luck_l2-invariants_2002}). The result now follows from Proposition~\ref{prop-0-betti-number} and the duality between the compactly supported cohomology group \(\HH^{r_1+2r_2}_{\operatorname{c}}(\Gamma,\Wmodk{})\) and the homology group \(\HH_{r_1+r_2}(\Gamma,\Wmodk{})\).
\end{proof}

\appendix

\section{Twisted von Neumann ranks}\label{sec-vonNeumann}

If \(\Gamma\) is a countable group, let \(\ell^2(\Gamma)\) be the complex Hilbert space on an orthonormal basis given by the elements of \(\Gamma\). Hence, \(\Gamma\) acts on \(\ell^2(\Gamma)\) on both sides as unitary operators. Any \(r\times s\) matrix \(A\) over \(\bbc[\Gamma]\) induces a bounded linear operator \(\phi^A_{\Gamma}\colon \ell^2(\Gamma)^s \to\ell^2(\Gamma)^r\) between Hilbert spaces by left multiplication. The \emph{von Neumann rank} of \(A\) is defined as \[\rk_{\Gamma}A =\dim_{\Gamma}\overline{\Image\phi^A_{\Gamma}}=s-\dim_{\Gamma}\Ker \phi^A_{\Gamma}\,,\] where \(\dim_\Gamma\) is the \emph{von Neumann dimension} of \(\Gamma\) (see \cite[Def. 1.10]{luck_l2-invariants_2002}). If \(\Gamma\) is not countable, then for every matrix \(A\) over \(\bbc[\Gamma]\) we define \(\rk_\Gamma A\) to be \(\rk_{\Gamma'} A\), where \(\Gamma'\) is any finitely generated subgroup of \(\Gamma\) containing the elements that appear in the entries of \(A\) (this rank does not depend on \(\Gamma'\), see \cite[Thm. 6.29]{luck_l2-invariants_2002}).

We recall that if \(\Gamma\) is finite, then \(\ell^2(\Gamma) = \bbc[\Gamma]\), a Hilbert \(\Gamma\)-module \(V\) is simply a \(\bbc[\Gamma]\)-module and we have that \[\dim_{\Gamma} V = \frac{\dim_{\bbc} V}{|\Gamma|}\quad\text{and}\quad\rk_\Gamma A = \frac{\rk_\bbc A}{|\Gamma|}\,.\]

The following result is a particular case of the \emph{sofic Lück approximation theorem}.

\begin{thm}[{\cite{luck_approximation_1994,jaikin-zapirain_base_2019}}] Let \(\Gamma_1 \geq \Gamma_2 \geq \cdots\) be a chain of normal subgroups of \(\Gamma\) of finite index with \(\bigcap \Gamma_i = \{1\}\). For each matrix \(A\) over \(\bbc[\Gamma]\), let \(A_i\) denote its image in \(\Mat(\bbc[\Gamma/\Gamma_i])\). We have that \[\lim_{i \to \infty} \rk_{\Gamma/\Gamma_i} A_i = \lim_{i \to \infty} \frac{\rk_\bbc A_i}{|\Gamma: \Gamma_i|} = \rk_{\Gamma} A\,.\]
\end{thm}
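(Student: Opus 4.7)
The plan is to reduce the statement to the convergence of spectral distribution functions of positive self-adjoint operators, where the delicate step will be controlling spectral mass that may accumulate at zero. First I would rewrite both sides as kernel codimensions: $\rk_\Gamma A = s - \dim_\Gamma \Ker \phi^A_\Gamma$ and $\rk_{\Gamma/\Gamma_i} A_i = s - (\dim_\bbc \Ker A_i)/[\Gamma:\Gamma_i]$, so the claim becomes convergence of the normalized kernel dimensions. Since $\Ker \phi^A_\Gamma = \Ker \phi^{A^\ast A}_\Gamma$ and similarly modulo $\Gamma_i$, I may replace $A$ by $B := A^\ast A \in \Mat_s(\bbc[\Gamma])$, which is positive self-adjoint. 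To such $B$ I attach the spectral distribution $F_B(\lambda) := \dim_\Gamma E_{[0,\lambda]}(B)$ on $[0,\|B\|]$, where $E$ is the spectral resolution of $B$ acting on $\ell^2(\Gamma)^s$; analogously let $F_{B_i}(\lambda)$ be the normalized spectral distribution of $B_i$ on $\bbc[\Gamma/\Gamma_i]^s$. Then the problem reduces to showing $F_{B_i}(0) \to F_B(0)$.

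Next I would prove convergence of all moments: $\int_0^{\|B\|} \lambda^k \, dF_B(\lambda) = \operatorname{tr}_\Gamma(B^k)$, the sum over the diagonal entries of $B^k \in \Mat_s(\bbc[\Gamma])$ of the coefficient of the identity, and analogously for $F_{B_i}$. Because $\bigcap \Gamma_i = \{1\}$, for each fixed $k$ the finitely many group elements appearing in $B^k$ have pairwise distinct images in $\Gamma/\Gamma_i$ once $i$ is large enough, so $\operatorname{tr}_{\Gamma/\Gamma_i}(B_i^k) = \operatorname{tr}_\Gamma(B^k)$ eventually, and every moment converges. Since both measures are supported on the common compact interval $[0,\|B\|]$ (using $\|B_i\|\leq\|B\|$), convergence of moments yields weak convergence $F_{B_i}(\lambda) \to F_B(\lambda)$ at every continuity point of $F_B$.

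The main obstacle is the possible atom of $F_B$ at $\lambda = 0$: weak convergence alone does not prevent spectral mass of $F_{B_i}$ from accumulating just above zero. The heart of Lück's original approach is a uniform estimate $F_{B_i}(\epsilon) - F_{B_i}(0) \leq C/\log(1/\epsilon)$ valid for all $i$; for $B$ over $\bbq[\Gamma]$ one clears denominators to obtain integer matrices $B_i$, whose non-zero eigenvalues are bounded below by $\|B_i\|^{-d_i}$ (with $d_i = [\Gamma:\Gamma_i]$), and the resulting $\log\det$ bound gives the estimate. Combined with weak convergence on $(\epsilon,\|B\|]$ and sending $\epsilon \to 0$, this yields $F_{B_i}(0) \to F_B(0)$. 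To upgrade from $\bbq$- to $\bbc$-coefficients, as achieved by Jaikin-Zapirain, one uses that the von Neumann rank does not depend on the ground field to reduce to matrices defined over a finitely generated subfield $K$ of $\bbc$, and then a specialization/ultrafilter argument transfers the logarithmic bound from the algebraic setting to genuine complex coefficients. I expect this last transcendence-theoretic step---uniformly controlling the zero-atom for matrices with complex entries---to be the main obstacle; moment convergence is essentially formal, while the uniform control of the spectrum near zero is the substantive content of the theorem.
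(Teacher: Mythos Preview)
The paper does not prove this theorem at all: it is stated with the citations \cite{luck_approximation_1994,jaikin-zapirain_base_2019} and used as a black box, so there is nothing to compare your argument against internally. That said, your outline is essentially the standard proof from those references. The reduction to $B=A^\ast A$, the identification of moments with traces of powers, the eventual stabilization of $\operatorname{tr}_{\Gamma/\Gamma_i}(B_i^k)$ using $\bigcap\Gamma_i=\{1\}$, and the resulting weak convergence of spectral measures are exactly L\"uck's original steps; your diagnosis that the entire content lies in the uniform logarithmic bound $F_{B_i}(\epsilon)-F_{B_i}(0)\le C/\log(1/\epsilon)$ preventing mass from accumulating just above zero is correct, as is the observation that this comes from an integrality/determinant argument for $\bbz[\Gamma]$-matrices. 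The passage from $\bbq$ to $\bbc$ is indeed the contribution of Jaikin-Zapirain, and your sketch (reduce to a finitely generated field, then transfer the determinant bound) points in the right direction, though the actual mechanism in \cite{jaikin-zapirain_base_2019} is somewhat more structural, going through Sylvester rank functions and $\ast$-regular closures rather than a direct transcendence argument. Overall your proposal is a faithful summary of how the cited theorem is proved; there is no gap, and the paper itself offers no alternative route.
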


There is a natural generalization of the von Neumann rank of finite groups to arbitrary fields. Suppose \(Q\) is a finite group. Then, for any field \(L\) and any matrix \(A \in \Mat(L[Q])\), we define \[\rk_Q^L A = \frac{\rk_L A}{|Q|}\,.\] Observe that \(\rk_Q^\bbc\) is simply the von Neumann rank \(\rk_Q\). For any field extension \(L \leq K\), we have that \(\rk_Q^L A = \rk_Q^K A\) for every matrix \(A \in \Mat(L[Q])\). If \(R\) is an integral domain with field of fractions \(K\) and \(A \in \Mat(R[Q])\), we define \(\rk_Q^R A = \rk_Q^K A\).


Let \(Z\) be a finite central subgroup of \(\Gamma\). For any character \(\chi\colon Z \to \bbc^\times\), we consider the trace $\operatorname{tr}_{\chi}\colon \Gamma\to \bbc$ on \(\Gamma\) given by
\[\operatorname{tr}_\chi(g) = \begin{cases}
\chi(g),&\text{if }g \in Z\,,\\
0, &\text{if }g\not\in Z\,.\end{cases}\]
Linearly extending the formula to \(\bbc[\Gamma]\) turns it into a tracial \(*\)-algebra, to which we will associate a Hilbert space \(\ell^2(\Gamma)_\chi\) and a cyclic representation \(\bbc[\Gamma] \to \mathcal{B}(\ell^2(\Gamma)_\chi)\) into its algebra of bounded operators. Recall that a trace \(\operatorname{tr}\) is \emph{faithful} when \(\operatorname{tr}(x^*x) = 0\) if and only if \(x = 0\). The set \[N = \ker(\operatorname{tr}) = \{x \in \mathbb{C}[\Gamma] \colon \operatorname{tr}(x^*x) = 0\}\] is a two-sided ideal of \(\mathbb{C}[\Gamma]\) and \(\operatorname{tr}\) always induces a faithful trace on \(\mathbb{C}[\Gamma]/N\). We will make use of the Gelfand-Naimark-Segal (or GNS for short) construction, given by the following proposition.
\begin{prop}[{\cite[cf. Prop. 7.2 and 7.6]{nicaLecturesCombinatoricsFree2006}}]\label{prop-gns-construction} Let \(\mathcal{A}\) be a \(*\)-algebra with a faithful trace \(\operatorname{tr}\). Assume that \(\mathcal{A}\) is generated by unitary elements, i.e. elements \(x \in \mathcal{A}\) such that \(x^*x = xx^* = 1\). Then, up to unitary equivalence, there exist a unique Hilbert space \(\mathcal{H}\) and a representation \(\rho\colon \mathcal{A} \to \mathcal{B}(\mathcal{H})\) with a cyclic generating vector \(e \in \mathcal{H}\) such that \[\langle \rho(x)(e), e\rangle = \operatorname{tr}(x)\] for every \(x \in \mathcal{A}\).
\end{prop}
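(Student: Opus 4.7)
The plan is to carry out the standard Gelfand-Naimark-Segal construction, with the hypothesis that $\mathcal{A}$ is generated by unitaries used exclusively to ensure boundedness of left multiplication.

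First I would use the faithful trace to define a genuine inner product on the vector space $\mathcal{A}$ by the formula $\langle x,y\rangle = \operatorname{tr}(y^*x)$. Positivity comes from the fact that $\operatorname{tr}$ is tracial (hence $\operatorname{tr}(y^*x)=\overline{\operatorname{tr}(x^*y)}$) together with the assumption that $\operatorname{tr}(x^*x) \geq 0$ on a $*$-algebra with a trace, and positive definiteness is exactly the faithfulness hypothesis. Completing $\mathcal{A}$ with respect to the associated norm produces the candidate Hilbert space $\mathcal{H}$, and the image $e$ of $1 \in \mathcal{A}$ is by construction a cyclic vector: the image of $\mathcal{A}$ is dense in $\mathcal{H}$ and equals $\mathcal{A}\cdot e$ once we define the action below.

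The representation $\rho$ is defined by left multiplication: for $x \in \mathcal{A}$, let $\rho(x)$ be the linear map sending (the image of) $y$ to (the image of) $xy$. The main obstacle is showing that $\rho(x)$ extends to a bounded operator on $\mathcal{H}$, which would be false for an arbitrary $*$-algebra with a faithful trace. Here the assumption that $\mathcal{A}$ is generated by unitaries becomes essential: if $u \in \mathcal{A}$ is a unitary, then for every $y \in \mathcal{A}$
\[
\|\rho(u)y\|^2 = \operatorname{tr}(y^*u^*uy) = \operatorname{tr}(y^*y) = \|y\|^2,
\]
so $\rho(u)$ is an isometry and extends to a unitary on $\mathcal{H}$. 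Since every element of $\mathcal{A}$ is a finite linear combination of products of unitaries, $\rho(x)$ is a finite linear combination of products of bounded operators and hence extends to an element of $\mathcal{B}(\mathcal{H})$. The trace identity $\langle \rho(x)e,e\rangle = \operatorname{tr}(1^*\cdot x) = \operatorname{tr}(x)$ is then immediate from the definitions.

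For uniqueness, suppose $(\mathcal{H}',\rho',e')$ is another such triple. I would define $U\colon \mathcal{A}\cdot e \to \mathcal{H}'$ by $U(\rho(x)e) = \rho'(x)e'$ and verify that $U$ preserves inner products by computing
\[
\langle \rho(x)e,\rho(y)e\rangle = \langle \rho(y^*x)e,e\rangle = \operatorname{tr}(y^*x) = \langle \rho'(y^*x)e',e'\rangle = \langle \rho'(x)e',\rho'(y)e'\rangle.
\]
In particular $U$ is well-defined on the image of $\mathcal{A}$ (kernel-preserving) and isometric, so it extends uniquely to a unitary $\mathcal{H}\to\mathcal{H}'$ by density of the cyclic orbits. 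By construction $U$ intertwines $\rho$ and $\rho'$ and sends $e$ to $e'$, yielding the desired unitary equivalence.
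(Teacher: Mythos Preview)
Your proposal is correct and follows exactly the construction the paper sketches immediately after the statement: the Hilbert space is the completion of $\mathcal{A}$ under $\langle x,y\rangle = \operatorname{tr}(y^*x)$, the cyclic vector is $1$, and the representation is the extension of left multiplication. The paper itself gives no further proof (it cites \cite{nicaLecturesCombinatoricsFree2006}), and your argument supplies precisely the details one would expect, including the key use of the unitary-generation hypothesis to guarantee boundedness and the standard intertwining argument for uniqueness.
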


In the previous proposition, the Hilbert space \(\mathcal{H}\) is given by the Hilbert completion of \(\mathcal{A}\) with respect to the inner product \[\langle x,y\rangle = \operatorname{tr}(y^*x)\,,\] the cyclic generating vector \(e\) is the group identity element \(1\), and the representations are the extension of either the right or left action of \(\mathcal{A}\) on itself (observe that the liftings of both actions to \(\mathcal{H}\) commute). Before continuing with this construction, we list some properties of the traces \(\operatorname{tr}_\chi\).

\begin{lem}\label{lem-transversal-bases} Let \(\Gamma\), \(Z\), \(\chi\) and \(\operatorname{tr}_\chi\) be as before.
\begin{enumerate}
    \item The kernel \(N\) of the trace \(\operatorname{tr}_\chi\) is the two-sided ideal of \(\bbc[\Gamma]\) generated by the central elements \(\{z - \chi(z): z \in Z\}\).
    \item Let \(T\) be a transversal for \(\Gamma/Z\). Then, the image \(T + N\) of $T$ in the quotient \[\bbc[\Gamma]_{\chi} = \bbc[\Gamma]/N\] is a basis consisting of unitary elements.
\end{enumerate}
\end{lem}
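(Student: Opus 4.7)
The plan is to prove (1) and (2) simultaneously by introducing the explicit two-sided ideal \(I := \sum_{z\in Z}\bbc[\Gamma]\cdot(z-\chi(z))\) (which coincides with the one-sided ideals generated by these elements since they are central) and identifying it with \(N\), while simultaneously extracting the basis of (2) from the quotient \(\bbc[\Gamma]/I\).

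For the inclusion \(I\subseteq N\), I would first verify the key identity \(\operatorname{tr}_\chi(xz)=\chi(z)\operatorname{tr}_\chi(x)\) for all \(x\in\bbc[\Gamma]\) and \(z\in Z\): only group elements \(g\) with \(gz\in Z\), i.e.\ \(g\in Z\), contribute to the sum, and on those \(\chi(gz)=\chi(g)\chi(z)\). This immediately gives \(\operatorname{tr}_\chi(x(z-\chi(z)))=0\), so the linear functional \(\operatorname{tr}_\chi\) descends to \(\bbc[\Gamma]/I\). Since \(|\chi(z)|=1\), we also have \((z-\chi(z))^*=z^{-1}-\chi(z^{-1})\in I\), so the involution descends as well, giving \(\operatorname{tr}_\chi(x^*x)=\bar{\operatorname{tr}}(\bar x^*\bar x)\); in particular \(I\subseteq N\).

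For the reverse inclusion together with (2), I would use that \(\bbc[\Gamma]\) is free over \(\bbc[Z]\) with basis \(T\), and that the kernel \(J=\ker(\chi\colon\bbc[Z]\to\bbc)\) satisfies \(\bbc[Z]/J\cong\bbc\). Since \(I=\bigoplus_{t\in T} J\cdot t\), the images of \(T\) form a \(\bbc\)-basis of \(\bbc[\Gamma]/I\). To see these are unitary, write \(t^{-1}=t'z\) uniquely with \(t'\in T\) and \(z\in Z\); then \(\bar t^*=\chi(z)\bar{t'}\) in \(\bbc[\Gamma]/I\), and using \(t\cdot t'=z^{-1}=t'\cdot t\) (central) together with \(\chi(z)\chi(z^{-1})=1\) gives \(\bar t\bar t^*=\bar t^*\bar t=\bar 1\), proving (2).

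To close the loop for (1), I would compute \(\bar{\operatorname{tr}}(y^*y)\) for \(y=\sum_{t\in T}\alpha_t\bar t\): the off-diagonal terms \(\bar t^*\bar{t'}\) lie above cosets \(t^{-1}t'Z\) which meet \(Z\) only when \(t=t'\), so their \(\bar{\operatorname{tr}}\)-trace vanishes, while the diagonal terms contribute \(|\alpha_t|^2\). Hence \(\bar{\operatorname{tr}}(y^*y)=\sum_{t\in T}|\alpha_t|^2\), so \(\bar{\operatorname{tr}}\) is faithful and \(N\subseteq I\), completing (1). The only place where care is needed is the bookkeeping of the decomposition \(t^{-1}=t'z\) when tracking involutions and the Gram matrix; there is no substantive obstacle.
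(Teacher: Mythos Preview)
Your proof is correct and follows essentially the same route as the paper: introduce the ideal \(I\) generated by the \(z-\chi(z)\), observe that the transversal \(T\) gives a \(\bbc\)-basis of \(\bbc[\Gamma]/I\), and verify that the induced trace is faithful on this quotient by computing \(\operatorname{tr}_\chi(x^*x)\) (equivalently \(\bar{\operatorname{tr}}(y^*y)=\sum_t|\alpha_t|^2\)), forcing \(I=N\). The only cosmetic differences are that you invoke the free \(\bbc[Z]\)-module structure of \(\bbc[\Gamma]\) to identify \(I\) and the basis, whereas the paper computes directly in the group basis, and that your unitarity check via \(t^{-1}=t'z\) is more explicit than necessary---the paper simply notes \((t+N)^*=t^{-1}+N\), which already gives \(\bar t^*\bar t=\bar 1\).
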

\begin{proof} Fix a transversal \(T\) for \(\Gamma/Z\). It is clear that the image of \(T\) is a basis for \[\mathcal{A} = \bbc[\Gamma]/\bbc[\Gamma]\{z - \chi(z): z \in Z\}\,.\] We want to prove that \(\mathcal{A}=\bbc[\Gamma]_\chi\). Observe that \(N\) contains every element of the form \(z - \chi(z)\) with $z\in Z$, and the surjective map \(\bbc[\Gamma] \to \bbc[\Gamma]_\chi\) factors through \(\mathcal{A}\).

Let \(x = \sum_{t \in T} \sum_{z \in Z} a_{tz}tz\) be an element in \(\bbc[\Gamma]\) and \(\overline{x} = \sum_{t \in T} \sum_{z \in Z} a_{tz}t\chi(z)\) be its image in \(\mathcal{A}\). Since \(T\) is a basis of \(\mathcal{A}\), we get that \(\overline{x}\) is zero if and only if
\[\sum_{t \in T} \left\|\sum_{z \in Z} a_{tz}\chi(z)\right\|^2 = \sum_{t \in T} \sum_{z,w\in Z}\overline{a_{tw}}a_{tz}\chi(w^{-1}z) = 0\,.\]
Writing \(x^* = \sum_{s \in T}\sum_{w \in Z} \overline{a_{sw}}s^{-1}w^{-1}\), we get that
\begin{align*}
    \operatorname{tr}_{\chi}(x^*x) &= \operatorname{tr}_{\chi}\left(\sum_{t,s \in T}\sum_{z,w \in S}\overline{a_{sw}}a_{tz}s^{-1}tw^{-1}z\right)\\
    &=\operatorname{tr}_{\chi}\left(\sum_{t \in T}\sum_{z,w \in Z}\overline{a_{tw}}a_{tz}w^{-1}z\right)\text{, since }\operatorname{tr}_{\chi}(s^{-1}tw^{-1}z) \neq 0\text{ iff }s=t\,,\\
    &= \sum_{t \in T} \sum_{z,w \in Z} \overline{a_{tw}}a_{tz}\chi(w^{-1}z)\,.
\end{align*}
Hence, \(\overline{x} = 0\) if and only if \(\operatorname{tr}_{\chi}(x^*x) = 0\), showing that the surjection \(\mathcal{A} \to \bbc[\Gamma]_{\chi}\) is an isomorphism, and thus proving (1). For (2), it suffices to observe that the elements \(t + N\) for \(t \in T\) are unitary, which is immediate since \((t+N)^* = t^{-1} + N\).
\end{proof}

Hence, by Proposition~\ref{prop-gns-construction} we can associate to \(\bbc[\Gamma]_\chi\) two representations into the bounded operators of a Hilbert space \(\ell^2(\Gamma)_\chi\), which is unique up to unitary equivalence. Let \(\mathcal{N}(\Gamma)_\chi\) be the weak closure of the left representation of \(\bbc[\Gamma]_\chi\) in \(\mathcal{B}(\ell^2(\Gamma)_\chi)\). This is a von Neumann algebra, i.e. a weak closed subset of \(\mathcal{B}(\mathcal{H})\) for some Hilbert space \(\mathcal{H}\). We call \(\ell^2(\Gamma)_\chi\) the \emph{twisted \(\ell^2\)-completion} of \(\bbc[\Gamma]\) and \(\mathcal{N}(\Gamma)_{\chi}\) the \emph{twisted group von Neumann algebra}.

Observe that the trace \(\operatorname{tr}_\chi\) extends to \(\mathcal{N}(\Gamma)_\chi\) by setting \[\operatorname{tr}_\chi(A) = \langle A(1),1\rangle\,\] 
for every operator \(A \in \mathcal{N}(\Gamma)_\chi\). Any closed left \(\Gamma\)-invariant subspace \(M \leq \ell^2(\Gamma)_\chi\) comes with an orthogonal projection \(P_M\colon \ell^2(\Gamma)_\chi \to M\), which is the unique element \(P_M\in \mathcal{N}(\Gamma)_\chi\) such that \(P_M^2 = P_M = P_M^*\) and \(\Image(P_M) = M\). We can thus define the \emph{twisted von Neumann dimension} of \(M\) to be \[\dim_{\Gamma}^\chi M = \operatorname{tr}_\chi(P_M)\,.\]

For a fixed \(A \in \mathcal{N}(\Gamma)_\chi\), 
we define the \emph{twisted von Neumann rank} of \(A\) to be \[\rk_{\Gamma}^\chi A = \dim_{\Gamma}^\chi \overline{\Image(A)} = 1 - \dim_{\Gamma}^\chi \ker(A)\,.\] Note that the twisted von Neumann dimension extends naturally to closed subspaces \(M\) of \((\ell^2(\Gamma)_\chi)^n\) by setting
\[\dim_{\Gamma}^\chi M = \operatorname{tr}_\chi(P_M) = \sum_{i=1}^n \langle P_M(e_i),e_i\rangle,\] where \(P_M \in \operatorname{Mat}_{n\times n}(\mathcal{N}(\Gamma)_\chi)\) is the unique element such that \(P_M^2 = P_M = P_M^*\) and \(\Image(P_M) = M\), and each \(e_i\) is the element of \((\ell^2(\Gamma)_\chi)^n\) containing \(1\) in the \(i\)-th coordinate and zero otherwise
. Hence, the twisted von Neumann rank is also defined for bounded operators \((\ell^2(\Gamma)_\chi)^n \to (\ell^2(\Gamma)_\chi)^m\) for all \(n\,, m > 0\).

If \(\Gamma\) is not countable, we can still define the twisted von Neumann rank \(\operatorname{rk}_\Gamma^\chi A\) for matrices \(A \in \Mat(\bbc[\Gamma])\). Indeed, let \(\Gamma'\) be any countable subgroup of \(\Gamma\) such that \(A \in \Mat(\bbc[\Gamma'])\), \(Z' = Z\cap \Gamma'\) and \(\chi'\) be the restriction of \(\chi\) to \(Z'\). Then, we define \(\operatorname{rk}_\Gamma^\chi A\) to be \(\operatorname{rk}_{\Gamma'}^{\chi'} A\). The following lemma implies that \(\operatorname{rk}_{\Gamma}^\chi A\) is indeed well defined and does not depend on \(\Gamma'\).

\begin{lem}\label{lem-independence-of-subgroup} Let \(\Gamma'\), \(Z'\) and \(\chi'\) be as above. Fix a subgroup \(\Gamma'' \leq \Gamma'\) and let \(Z'' = Z' \cap \Gamma''\) and \(\chi''\) be the restriction of \(\chi'\) to \(Z''\). The inclusion \(\Gamma'' \to \Gamma'\) induces a functor \(I\) from the category of closed \(\mathcal{N}(\Gamma'')_{\chi''}\)-submodules of \(\ell^2(\Gamma'')_{\chi''}^n\) to the category of closed \(\mathcal{N}(\Gamma')_{\chi'}\)-submodules of \(\ell^2(\Gamma')_{\chi'}^n\), for any \(n\). Moreover, we have that \[\dim_{\Gamma''}^{\chi''} M = \dim_{\Gamma'}^{\chi'} I(M)\,.\]
\end{lem}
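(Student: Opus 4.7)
The plan is to show that the inclusion \(\Gamma'' \hookrightarrow \Gamma'\) induces a trace-preserving embedding of tracial von Neumann algebras \(\mathcal{N}(\Gamma'')_{\chi''} \hookrightarrow \mathcal{N}(\Gamma')_{\chi'}\); once this is in place, the functor \(I\) and the dimension equality will both follow by extending orthogonal projections along this inclusion.

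First, I would verify that the ring map \(\bbc[\Gamma''] \to \bbc[\Gamma']\) descends to an injective, trace-preserving \(*\)-homomorphism \(\bbc[\Gamma'']_{\chi''} \hookrightarrow \bbc[\Gamma']_{\chi'}\). The hypothesis \(Z'' = Z' \cap \Gamma''\) is essential on both counts: if \(T''\) is a transversal for \(Z''\) in \(\Gamma''\), two elements of \(T''\) lying in the same \(Z'\)-coset must differ by an element of \(Z' \cap \Gamma'' = Z''\), and hence coincide; this allows one to extend \(T''\) to a transversal \(T'\) for \(Z'\) in \(\Gamma'\), so that the injectivity is immediate from the bases provided by Lemma~\ref{lem-transversal-bases}. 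For the trace, a group element \(g \in \Gamma''\) satisfies \(\operatorname{tr}_{\chi'}(g) \neq 0\) iff \(g \in Z' \cap \Gamma'' = Z''\), in which case \(\chi'(g) = \chi''(g)\), so \(\operatorname{tr}_{\chi'}|_{\bbc[\Gamma'']} = \operatorname{tr}_{\chi''}\). Completing for these inner products then yields the isometric Hilbert space embedding \(\ell^2(\Gamma'')_{\chi''} \hookrightarrow \ell^2(\Gamma')_{\chi'}\).

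The key second step is to realise \(\ell^2(\Gamma')_{\chi'}\) as a direct sum of copies of \(\ell^2(\Gamma'')_{\chi''}\) as an \(\mathcal{N}(\Gamma'')_{\chi''}\)-module. Since \(Z'\) is central, \(\Gamma'' Z'\) is a subgroup of \(\Gamma'\), and the map \(gZ'' \mapsto gZ'\) identifies the cosets of \(Z''\) in \(\Gamma''\) with the cosets of \(Z'\) inside \(\Gamma'' Z'\) (injectivity again uses \(Z'' = Z' \cap \Gamma''\)). Fixing a transversal \(R \ni 1\) for \(\Gamma'' Z'\) in \(\Gamma'\), the set \(T'' R\) is a transversal for \(Z'\) in \(\Gamma'\), yielding the orthogonal decomposition
\[\ell^2(\Gamma')_{\chi'} = \bigoplus_{r \in R} \ell^2(\Gamma'')_{\chi''} \cdot r\,.\]
Because right multiplication by \(r\) is unitary and commutes with the left action of \(\bbc[\Gamma'']_{\chi''}\), this is a decomposition of left \(\mathcal{N}(\Gamma'')_{\chi''}\)-modules, each summand isomorphic to \(\ell^2(\Gamma'')_{\chi''}\). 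Taking the weak closure of the resulting diagonal action realises \(\mathcal{N}(\Gamma'')_{\chi''}\) as a unital von Neumann subalgebra of \(\mathcal{N}(\Gamma')_{\chi'}\), and evaluating both traces at the vector \(1\) (which lies in the summand \(r = 1\)) shows that \(\operatorname{tr}_{\chi'}\) restricts to \(\operatorname{tr}_{\chi''}\) under this inclusion.

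Finally, for a closed \(\mathcal{N}(\Gamma'')_{\chi''}\)-submodule \(M \subseteq \ell^2(\Gamma'')_{\chi''}^n\), I would let \(P_M \in \operatorname{Mat}_n(\mathcal{N}(\Gamma'')_{\chi''})\) be its orthogonal projection and define \(I(M)\) as the image of \(P_M\) (viewed inside \(\operatorname{Mat}_n(\mathcal{N}(\Gamma')_{\chi'})\) via the inclusion above) acting on \(\ell^2(\Gamma')_{\chi'}^n\); concretely, \(I(M) = \bigoplus_{r \in R} M \cdot r\), from which functoriality with respect to inclusions of submodules is transparent. Since the orthogonal projection onto \(I(M)\) is \(P_M\) itself in the larger algebra, we obtain
\[\dim_{\Gamma'}^{\chi'} I(M) = \operatorname{tr}_{\chi'}(P_M) = \operatorname{tr}_{\chi''}(P_M) = \dim_{\Gamma''}^{\chi''} M\,.\]
The main obstacle is the second step: one cannot naively use a transversal for \(\Gamma''\) in \(\Gamma'\), because \(Z'\) need not sit inside \(\Gamma''\); the right object to quotient by is \(\Gamma'' Z'\), and it is the centrality of \(Z'\) that makes this a subgroup and delivers the identification of cosets needed for orthogonality of the summands.
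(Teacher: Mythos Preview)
Your proposal is correct and follows essentially the same approach as the paper: both arguments decompose \(\ell^2(\Gamma')_{\chi'}\) as an orthogonal direct sum \(\bigoplus_{r} \ell^2(\Gamma'')_{\chi''}\cdot r\) over a transversal for \(\Gamma''Z'\) in \(\Gamma'\), extend the projection \(P_M\) diagonally along this decomposition, and conclude via the resulting trace-preserving inclusion \(\mathcal{N}(\Gamma'')_{\chi''}\hookrightarrow \mathcal{N}(\Gamma')_{\chi'}\). Your write-up is in fact somewhat more explicit than the paper's in isolating where the hypothesis \(Z'' = Z'\cap\Gamma''\) and the centrality of \(Z'\) are used.
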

\begin{proof} First, observe that the Hilbert space completion of the natural pre-Hilbert structure on \[\ell^2(\Gamma'')_{\chi''}\otimes_{\bbc[\Gamma'']_{\chi''}} \bbc[\Gamma']_{\chi'} \simeq_{\bbc} \bigoplus_{\Gamma'/(\Gamma''Z')} \ell^2(\Gamma'')_{\chi''}\] is \(\mathcal{N}(\Gamma')_{\chi'}\)-isometrically isomorphic to \(\ell^2(\Gamma')_{\chi'}\). Indeed, if \(T''\) is a transversal for \(\Gamma''/Z''\) and \(X\) is a transversal for \(\Gamma'/(\Gamma''Z')\), then \(T' = T''X\) is a transversal for \(\Gamma'/Z'\) and the map \(\bigoplus_{\Gamma'/(\Gamma''Z')} \ell^2(\Gamma'')_{\chi''} \to \ell^2(\Gamma')_{\chi'}\) given by \((\alpha_x)_{x \in X} \mapsto \sum_{x \in X} \alpha_xx\) is injective, \(\bbc[\Gamma']_{\chi'}\)-equivariant and has dense image. Hence, if \(M\) is any closed \(\mathcal{N}(\Gamma'')_{\chi''}\)-submodule of \(\ell^2(\Gamma'')_{\chi''}^n\), the Hilbert completion \(I(M)\) of \(M \otimes_{\bbc[\Gamma'']} \bbc[\Gamma']\) is a closed \(\mathcal{N}(\Gamma')_{\chi'}\)-submodule of \(\ell^2(\Gamma')_{\chi'}^n\). 

In particular, \(I\) also induces a homomorphism of \(*\)-algebras \(I\colon \mathcal{N}(\Gamma'')_{\chi''} \to \mathcal{N}(\Gamma')_{\chi'}\). We will now show that, for any \(A \in \mathcal{N}(\Gamma'')_{\chi''}\), we have that \[\operatorname{tr}_{\chi''}(A) = \operatorname{tr}_{\chi'}(I(A))\,.\] By continuity and additivity, it suffices to verify this for \(A = g \in \Gamma''\), for which this follows immediately from the definition of the action of \(I(g)\) on \(\bigoplus_{\Gamma'/(\Gamma''Z')} \ell^2(\Gamma'')\) and hence on its Hilbert completion.

Now, for \(M \leq \ell^2(\Gamma'')_{\chi''}^n\) we have that \(\dim_{\Gamma''}^{\chi''} M=\operatorname{tr}_{\chi''}(P_M)\), where \(P_M\in \Mat_{n\times n}(\mathcal{N}(\Gamma'')_{\chi''})\) is as before
. Since \(I(P_M)\) satisfies that \(I(P_M)^2 = I(P_M) = I(P_M)^*\) and \(\Image(I(P_M)) = I(M)\), we must have that \[\dim_{\Gamma'}^{\chi'} I(M) = \operatorname{tr}_{\chi'} I(P_M) = \operatorname{tr}_{\chi''} P_M = \dim_{\Gamma''}^{\chi''} M\,.\qedhere\] 
\end{proof}

\begin{rmk}\label{rmk-twisted-torsion-free} If \(Z = \{1\}\), then \(\operatorname{tr}_\chi\) is simply the characteristic function \(\operatorname{tr}_\chi(g) = \delta_{1,g}\) and \(\ell^2(\Gamma)_\chi \simeq \ell^2(\Gamma)\). Hence, the twisted von Neumann rank \(\rk_\Gamma^\chi\) coincides with the classical von Neumann rank \(\rk_\Gamma\). Even if \(Z \neq \{1\}\), if \(\Gamma'\) is a subgroup of \(\Gamma\) that intersects \(Z\) trivially and \(A\) is a matrix over \(\bbc[\Gamma']\), we have that \(\rk_\Gamma^\chi A = \rk_{\Gamma'} A\) by Lemma~\ref{lem-independence-of-subgroup}. In particular, this is the case if \(\Gamma'\) is torsion-free.

When \(\Gamma\) has torsion central elements, then distinct central characters lead to distinct rank functions. This is why the hypothesis on the central characters is essential to the statement of Conjecture~\ref{conjecture-qualitative}. As an example, consider \(\Gamma\) the cyclic group of order \(2\) inside \(\SL_2(\bbc)\) generated by \(g = -\operatorname{Id}\) and the element \(g-1\) inside \(\bbc[\Gamma]\). Then, for the trivial character \(\bm{1}(g) = 1\), we have that \(\rk_\Gamma^{\bm{1}}(g-1) = 0\), while for the non-trivial character \(\bm{-1}(g) = -1\) we have that \(\rk_\Gamma^{\bm{-1}}(g-1) = 1\). This reflects on the cohomology of \(\Gamma\) with coefficients in \(\Sym^\lambda \bbc^2\) by the appearance of two clearly distinct converging subsequences 
\[\frac{\dim_\bbc \HH^0(\Gamma, \Sym^\lambda \bbc^2)}{\dim_\bbc \Sym^\lambda \bbc^2} = \begin{cases}
    1,&\text{if }\lambda\text{ is even}\,,\\
    0,&\text{if }\lambda\text{ is odd}\,,
\end{cases}\] where the parity of \(\lambda\) determines whether \(\Gamma\) acts on \(\Sym^\lambda \bbc^2\) through \(\bm{1}\) or \(\bm{-1}\).
\end{rmk}

We recall that a von Neumann algebra whose center is exactly \(\mathbb{C}\) is called a \emph{factor}. It is a well known fact that the group von Neumann algebra \(\mathcal{N}(\Gamma)\) is a factor if and only if \(\Gamma\) is an ICC group, i.e. every non-trivial conjugacy class is infinite. In particular, \(\Gamma/Z\) is ICC if and only if \(\mathcal{N}(\Gamma)_1\) is a factor. We show that one direction still holds for all the twisted group von Neumann algebras.

\begin{lem}\label{lem-factors} Fix \(\chi \in \operatorname{Irr}(Z)\). If \(\Gamma/Z\) is ICC then \(\mathcal{N}(\Gamma)_\chi\) is a factor.
\end{lem}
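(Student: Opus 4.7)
The strategy is the classical Fourier-coefficient argument used to show that $\mathcal{N}(\Gamma)$ is a factor when $\Gamma$ is ICC, adapted to the twisted setting. The ingredients already at hand are (i) the orthonormal basis $\{t + N : t \in T\}$ of $\ell^2(\Gamma)_\chi$ provided by Lemma~\ref{lem-transversal-bases}, (ii) the fact that the trace $\operatorname{tr}_\chi$ is faithful on $\bbc[\Gamma]_\chi$ and hence on $\mathcal{N}(\Gamma)_\chi$, and (iii) the commuting left/right actions of $\bbc[\Gamma]_\chi$ on $\ell^2(\Gamma)_\chi$ produced by the GNS construction.

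Let $x$ be in the center of $\mathcal{N}(\Gamma)_\chi$. Since the algebra generated by $\Gamma$ in $\mathcal{N}(\Gamma)_\chi$ is weakly dense, $x$ is central if and only if $g x = x g$ for every $g \in \Gamma$. First I would use faithfulness of $\operatorname{tr}_\chi$ to deduce that the cyclic vector $e = 1 + N$ is separating, so that the map $y \mapsto y \cdot e$ embeds $\mathcal{N}(\Gamma)_\chi$ injectively into $\ell^2(\Gamma)_\chi$. Expanding in the basis of Lemma~\ref{lem-transversal-bases}, write
\[
x \cdot e = \sum_{t \in T} a_t\, (t + N), \qquad \sum_{t \in T} |a_t|^2 = \|x e\|^2 < \infty.
\]

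Next I would translate centrality into a symmetry of the coefficients. For $g \in \Gamma$ and $t \in T$, write $g t g^{-1} = t_g\, z_{g,t}$ with $t_g \in T$ and $z_{g,t} \in Z$. Then in $\bbc[\Gamma]_\chi$,
\[
g \cdot (t + N) \cdot g^{-1} = \chi(z_{g,t})\, (t_g + N).
\]
Because the left and right $\bbc[\Gamma]_\chi$-actions on $\ell^2(\Gamma)_\chi$ commute, the relation $g x g^{-1} = x$ yields $g \cdot (xe) \cdot g^{-1} = xe$, and comparing coefficients in the orthonormal basis gives $a_{t_g} = \chi(z_{g,t})\, a_t$. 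Since $|\chi(z_{g,t})| = 1$, the function $t \mapsto |a_t|$ is constant on the orbits of the conjugation action of $\Gamma$ on $T$, which correspond exactly to the conjugacy classes of $\Gamma/Z$.

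Now I invoke the ICC hypothesis. Every non-trivial conjugacy class of $\Gamma/Z$ is infinite, so for each $t \in T$ whose image $\bar t \in \Gamma/Z$ is non-trivial, the coefficient $|a_t|$ takes the same value on infinitely many $t' \in T$. Combined with the finiteness $\sum_{t \in T}|a_t|^2 < \infty$, this forces $a_t = 0$ for all such $t$. Only the unique $t \in T$ with $\bar t = 1$, namely $t = 1$, can contribute, so $x \cdot e = a_1 \cdot e$, and injectivity of $y \mapsto ye$ gives $x = a_1 \cdot 1 \in \bbc \cdot I$. Hence $\mathcal{N}(\Gamma)_\chi$ is a factor.

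The only step that requires some care (and is the main conceptual point beyond the well-known ICC argument) is the identification of the conjugation action on basis vectors with the $\chi$-twisted lift of conjugation on $\Gamma/Z$; but with Lemma~\ref{lem-transversal-bases}(2) in place this is a direct computation. Everything else (separating cyclic vector, $\ell^2$-expansion of von Neumann algebra elements, density of $\bbc[\Gamma]_\chi$) is standard for tracial von Neumann algebras obtained via GNS from a faithful trace.
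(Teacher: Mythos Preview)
Your proof is correct and follows the same Fourier-coefficient strategy as the paper: expand the image of a central element under the separating cyclic vector, use conjugation invariance, and invoke ICC together with square-summability. In fact your argument is slightly cleaner than the paper's: by observing that $|\chi|=1$ makes $t\mapsto |a_t|$ constant on conjugation orbits, you treat uniformly what the paper splits into two cases (according to whether $\operatorname{Stab}_\Gamma(tZ)$ strictly contains $C_\Gamma(t)$), since the ICC hypothesis on $\Gamma/Z$ already forces every nontrivial orbit in $T$ to be infinite.
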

\begin{proof} After modding out \(\Gamma\) by the kernel of \(\chi\), we may assume that \(\chi\) is injective. We will construct a special transversal for \(\Gamma/Z\). First, let \(T_0\) be a set of representatives in \(\Gamma\) for the conjugacy classes of \(\Gamma/Z\). For any \(t \in T_0\), let \(S_t\) be a set of left coset representatives of the stabilizer \[\operatorname{Stab}_\Gamma(tZ) = \{h \in \Gamma : hth^{-1} = tz\text{ for some }z \in Z\}\] of the coset \(tZ\) under the conjugation action of \(\Gamma\). We assume that \(1 \in S_t\) for every \(t \in T_0\). Observe that the centralizer \(C_\Gamma(t)\) of \(t\) in \(\Gamma\) is contained in \(\operatorname{Stab}_\Gamma(tZ)\). Then, the set \[T = \{sts^{-1} : t \in T_0\,, s \in S_t\}\] is a transversal for \(\Gamma/Z\) containing \(1\).

Fix an element \(A \in \mathcal{N}(\Gamma)_\chi\). When we restrict \(A\) to \(\bbc[\Gamma]_\chi\), it acts as left multiplication by \(x = A(1) \in \ell^2(\Gamma)_\chi\). Hence, it suffices to show that the only central elements in \(\ell^2(\Gamma)_\chi\) are the constant multiplies of the identity.

Write a fixed element \(x \in \ell^2(\Gamma)_\chi\) uniquely as \[x = \sum_{t \in T_0} \sum_{s \in S_t} a_{sts^{-1}}sts^{-1}\] with \(a_{sts^{-1}} \in \bbc\). If \(h\) is any element of \(\Gamma\), then for every element \(t \in T_0\) and \(s \in S_t\) there exists unique elements \(r = r(h,t,s)\in S_t\) and \(u = u(h,t,s) \in Z\) such that \(hsts^{-1}h^{-1} = rtr^{-1}u\). If for some \(h,t,s\) the element \(u\) is non-trivial, then \(r^{-1}hs\) is an element of \(\operatorname{Stab}_\Gamma(tZ)\) that does not belong to \(C_\Gamma(t)\). We will show that if \(x\) is central then \(a_{sts^{-1}} =0\) for \(t \neq 1\) arbitrary. Fixing such \(t\), there are two cases to consider:

\emph{Case 1:} There exists an element \(h \in \operatorname{Stab}_\Gamma(tZ) \smallsetminus C_\Gamma(t)\).

In this case, we have that \(hth^{-1} = tz\) for some non-trivial \(z \in Z\). This also implies that for any \(\widetilde{s} \in S_t\) the element \(\widetilde{s}h\widetilde{s}^{-1}\) belongs to \(\operatorname{Stab}_\Gamma(\widetilde{s}t\widetilde{s}^{-1}Z) \smallsetminus C_\Gamma(\widetilde{s}t\widetilde{s}^{-1})\). Moreover, if for some \(s \in S_t\) we have that \[\widetilde{s}h\widetilde{s}^{-1}sts^{-1}\widetilde{s}h^{-1}\widetilde{s}^{-1} = \widetilde{s}t\widetilde{s}^{-1}z'\] for some \(z' \in Z\), then \(hs^{-1}\widetilde{s} \in \operatorname{Stab}_\Gamma(tZ)\), from which we obtain that \(s = \widetilde{s}\).
We now compute
\begin{align*}
    \widetilde{s}h\widetilde{s}^{-1}x\widetilde{s}h^{-1}\widetilde{s}^{-1} &= \sum_{s \in S_t} a_{sts^{-1}} \widetilde{s}h\widetilde{s}^{-1}sts^{-1}\widetilde{s}h^{-1}\widetilde{s}^{-1} + \sum_{t' \neq t \in T_0}\cdots\\
    &= a_{\widetilde{s}t\widetilde{s}^{-1}}\chi(z)\widetilde{s}t\widetilde{s}^{-1} + \sum_{s \neq \widetilde{s}\in S_t} a_{sts^{-1}} \widetilde{r}t\widetilde{r}^{-1}\widetilde{u} + \sum_{t' \neq t \in T_0}\cdots.
\end{align*}
Thus, if \(\widetilde{s}h\widetilde{s}^{-1}x\widetilde{s}h^{-1}\widetilde{s}^{-1} = x\), we have that \(a_{\widetilde{s}t\widetilde{s}^{-1}}\chi(z) = a_{\widetilde{s}t\widetilde{s}^{-1}}\), which is only possible if \(a_{\widetilde{s}t\widetilde{s}^{-1}}= 0\). Since the choice of \(\widetilde{s}\) was arbitrary, we are done.

\emph{Case 2:} For every \(h \in \Gamma\) and every \(s \in S_t\) there is a unique \(r = r(h,s) \in S_t\) such that \(hsts^{-1}h = rtr^{-1}\).

In particular, for \(h\) fixed, the map \(s \mapsto r(h,s)\) is a bijection in \(S_t\). Take any \(h \in \Gamma\) and consider the element
\begin{align*}
    hxh^{-1} &= \sum_{s \in S_t} a_{sts^{-1}}hsts^{-1}h^{-1} + \sum_{t' \neq t \in T_0} \cdots \\
    &= \sum_{s \in S_t} a_{h^{-1}sts^{-1}h}sts^{-1} + \sum_{t' \neq t \in T_0} \cdots.
\end{align*}
If \(hxh^{-1} = x\), then by our choice of \(T_0\), for every \(s \in S_t\) we must have that \(a_{sts^{-1}} = a_{h^{-1}sts^{-1}h}\). If \(x\) is central, since the conjugacy class of \(sts^{-1}\) is infinite in \(\Gamma\) for \(t\neq 1\), we must have infinitely many repeated coefficients \(a_{h^{-1}sts^{-1}h}\) appearing in the expansion of \(x\), which is only possible if \(a_{sts^{-1}} = 0\) for all \(s\), due to the fact that the sum \[\sum_{t \in T_0} \sum_{s \in S_t} |a_{sts^{-1}}|^2\] is finite by definition.
\end{proof}

\begin{rmk} The ``only if'' direction of the previous lemma is false in the twisted setting. For example, if \[\Gamma = Q_8 = \langle e,i,j,k\mid e^2 = 1\,,\, i^2 = j^2 = k^2 = ijk = e\rangle\] denotes the quaternion group, then its center \(Z = \langle e\rangle\) is cyclic of order two. Now, for the character \(\chi\colon Z \to \bbc^\times\) given by \(\chi(e) = -1\), one gets that \(\bbc[\Gamma]_{\chi}\) is a quaternion algebra over \(\bbc\) and, hence, a central simple \(\bbc\)-algebra. The finite groups \(\Gamma\) for which there exists a central character \(\chi\) such that \(\operatorname{Z}(\bbc[\Gamma]_\chi) = \bbc\) are called \emph{groups of central type}. The authors are currently unaware of an infinite group \(\Gamma\) containing a finite central subgroup \(Z\) and a central character \(\chi\colon Z \to \bbc^\times\) such that \(\Gamma/Z\) is not ICC but \(\mathcal{N}(\Gamma)_\chi\) is a factor.
\end{rmk}




We recall that a subset \(S\) of a ring \(R\) satisfies the \emph{right Ore condition} if it satisfies:
\begin{enumerate}
    \item for all \(r \in R\) and \(s \in S\), there exists \(r' \in R\) and \(s' \in S\) such that \(rs' = sr'\); and
    \item for all \(r \in R\) and \(s \in S\) such that \(sr = 0\), there exists \(s' \in S\) such that \(rs' = 0\).
\end{enumerate}
If \(S\) satisfies the right Ore condition, we can construct a ring \(RS^{-1}\) called the \emph{right Ore localization} of \(S\) with a flat ring homomorphism \(R \to RS^{-1}\) 
sending the elements of \(S\) to units and is universal among all ring homomorphisms that send \(S\) to units \cite[Lem 8.15]{luck_l2-invariants_2002}. We say that a ring \(R\) is \emph{von Neumann regular} if for any \(r \in R\) there exists \(s \in R\) such that \(srs  = r\). A \(*\)-algebra is called a \emph{\(*\)-regular ring} if it is von Neumann regular and its involution is proper, i.e. \(x^*x = 0\) if and only if \(x = 0\).

In the case of the twisted von Neumann algebra \(\mathcal{N}(\Gamma)_\chi\), it is proven in \cite[Prop. 2.8]{reich1998} that the set \(S\subseteq \mathcal{N}(\Gamma)_\chi\) of all non zero-divisors satisfies the right Ore condition, and we can thus consider the localization $\mathcal{U}(\Gamma)_\chi = \mathcal{N}(\Gamma)_\chi S^{-1}$.

\begin{lem}\label{lem-affiliated-op} The following assertions are true:
\begin{enumerate}[label=(\alph*)]
    \item The map \(\iota\colon \mathcal{N}(\Gamma)_\chi \to \mathcal{U}(\Gamma)_\chi\) is injective.
    \item The involution \(*\), the twisted von Neumann dimension \(\dim_{\Gamma}^\chi\) and the twisted von Neumann rank \(\rk_{\Gamma}^\chi\) extend from \(\mathcal{N}(\Gamma)_\chi\) to \(\mathcal{U}(\Gamma)_\chi\).
    \item The ring \(\mathcal{U}(\Gamma)_\chi\) is \(*\)-regular.
    \item If \(\mathcal{N}(\Gamma)_\chi\) is a factor, then the center of \(\mathcal{U}(\Gamma)_\chi\) is \(Z(\mathcal{N}(\Gamma)_\chi) \simeq \bbc\), which equals the centralizer of the image of \(\Gamma\) in \(\mathcal{U}(\Gamma)_\chi\).
\end{enumerate}
\end{lem}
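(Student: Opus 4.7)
The plan is to mirror the classical theory of the algebra of affiliated operators for a finite von Neumann algebra, as exposed in \cite[Ch.~8]{luck_l2-invariants_2002}, translated into the twisted setting. The key observation is that $\mathcal{N}(\Gamma)_\chi$ is a finite von Neumann algebra carrying the faithful normal finite trace $\operatorname{tr}_\chi$ built from the GNS construction of Proposition~\ref{prop-gns-construction}. Under this lens, $\mathcal{U}(\Gamma)_\chi = \mathcal{N}(\Gamma)_\chi S^{-1}$ can be identified with the $*$-algebra of closed densely-defined operators affiliated to $\mathcal{N}(\Gamma)_\chi$, and each claim becomes a transported version of a standard result.

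For (a), the kernel of the Ore localization map consists of those $a \in \mathcal{N}(\Gamma)_\chi$ with $as = 0$ for some $s \in S$, and since $s$ is a non zero-divisor this forces $a = 0$. For (b), the involution extends by $(as^{-1})^* = (s^*)^{-1}a^*$, using that $s \in S$ forces $s^* \in S$, a consequence of properness of the involution on $\mathcal{N}(\Gamma)_\chi$. The twisted dimension and rank functions extend directly: for a matrix $A$ over $\mathcal{U}(\Gamma)_\chi$, clear denominators to write $A = A_0 D^{-1}$ with $A_0$ over $\mathcal{N}(\Gamma)_\chi$ and $D$ a diagonal matrix of elements of $S$, then set $\rk^\chi_\Gamma A := \rk^\chi_\Gamma A_0$; independence of the representation follows from the Ore equivalence relation, and $\dim^\chi_\Gamma$ extends analogously through the rank-dimension duality. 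For (c), I would invoke the polar decomposition: given $x \in \mathcal{U}(\Gamma)_\chi$, write $x = u|x|$ with $u \in \mathcal{N}(\Gamma)_\chi$ a partial isometry and $|x|$ positive self-adjoint and affiliated; functional calculus on $|x|$ produces a projection $p$ onto the closure of its range and an affiliated pseudoinverse $|x|^{-1}p \in \mathcal{U}(\Gamma)_\chi$. Then $y = |x|^{-1}p\,u^*$ satisfies $xyx = x$, giving von Neumann regularity, and properness of the involution is inherited from $\mathcal{N}(\Gamma)_\chi$ via the Ore construction.

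For (d), the strategy splits in two. First, to see $Z(\mathcal{U}(\Gamma)_\chi) = \mathbb{C}$, I would show that this center coincides with the Ore localization of $Z(\mathcal{N}(\Gamma)_\chi) = \mathbb{C}$, which equals $\mathbb{C}$ itself since $\mathbb{C}$ is already a field: polar parts and spectral projections of a central affiliated operator must themselves be central in $\mathcal{N}(\Gamma)_\chi$, hence scalar, forcing the operator to be scalar. Second, the inclusion $\mathbb{C} \subseteq C_{\mathcal{U}(\Gamma)_\chi}(\Gamma)$ is immediate; for the reverse, if $x \in \mathcal{U}(\Gamma)_\chi$ centralizes $\Gamma$, then uniqueness of polar decomposition propagates the commutation to $u$, $|x|$, and every spectral projection $e_n$ of $|x|$ onto $[0,n]$. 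The bounded truncations $xe_n \in \mathcal{N}(\Gamma)_\chi$ therefore commute with $\Gamma$; since $\Gamma$ generates $\mathcal{N}(\Gamma)_\chi$ as a von Neumann algebra (the double commutant theorem), $xe_n$ is central in $\mathcal{N}(\Gamma)_\chi$, hence scalar. Passing to the strong limit as $e_n \nearrow 1$ shows that $x$ commutes with every element of $\mathcal{N}(\Gamma)_\chi$ and, after clearing denominators, with all of $\mathcal{U}(\Gamma)_\chi$.

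The main obstacle I expect is precisely the unboundedness of operators in $\mathcal{U}(\Gamma)_\chi$ that appears in part (d): one cannot directly promote commutation with $\Gamma$ to commutation with weak limits of $\bbc[\Gamma]_\chi$. The spectral truncation argument above sidesteps this by reducing everything to statements about the bounded elements $xe_n$, where the double commutant theorem applies, and then reassembling via strong convergence. The same truncation idea is also what forces central affiliated operators to be scalar in the factor setting.
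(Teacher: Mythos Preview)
Your proposal is correct. The paper's own proof is almost entirely by citation: part (a) is argued exactly as you do (kernel elements satisfy $as=0$ with $s$ a non-zero-divisor, hence $a=0$), while parts (b) and (c) are deferred to Reich \cite[Prop.~2.10, Note~2.11, Prop.~3.9]{reich1998} and part (d) to \cite[Prop.~30]{liu_double_2012}. Your sketch essentially reconstructs the content of those references, so the two approaches are compatible rather than genuinely different.

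A couple of small remarks. In (b), the formula $(as^{-1})^* = (s^*)^{-1}a^*$ produces a left fraction, so you implicitly use that $S$ also satisfies the left Ore condition; this follows because $S^* = S$ and $*$ is an anti-automorphism, but it is worth saying. In (d), your spectral-truncation argument actually terminates in finitely many steps rather than requiring a genuine strong limit: once you know each $e_n$ commutes with $\Gamma$, it is a central projection in the factor $\mathcal{N}(\Gamma)_\chi$, hence equal to $0$ or $1$; since $e_n \nearrow 1$, some $e_N = 1$, so $x$ is already bounded and the conclusion $x \in Z(\mathcal{N}(\Gamma)_\chi) = \bbc$ follows directly. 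This sharpens the argument but does not change its validity.
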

\begin{proof} 
	Item (a) follows from the fact that the kernel of \(\iota\) consists of all \(r \in \mathcal{N}(\Gamma)_\chi\) such that \(rs = 0\) for some \(s \in S\) and \(S\) consists of non-zero-divisors by the polar decomposition. The fact that the involution \(*\) extends to \(\mathcal{U}(\Gamma)_\chi\) and turns it into a \(*\)-regular ring is shown in \cite[Prop. 2.10 and Note 2.11]{reich1998}. The extension of \(\dim_{\Gamma}^\chi\) and hence of \(\rk_\Gamma^\chi\) to \(\mathcal{U}(\Gamma)_\chi\) follows from \cite[Prop. 3.9]{reich1998}. The last claim follows from \cite[Prop. 30]{liu_double_2012}.
\end{proof}

The ring \(\mathcal{U}(\Gamma)_{\chi}\) can be identified with the ring of (unbounded) affiliated operators to \(\mathcal{N}(\Gamma)_{\chi}\), from which we derive its name (see \cite[Sec. 2]{reich1998}). Observe that, if \(\Gamma'\) is a subgroup of \(\Gamma\), then the inclusion map \(\mathcal{N}(\Gamma')_{\chi'} \to \mathcal{N}(\Gamma)_{\chi}\) induces an injective homomorphism \(\mathcal{U}(\Gamma')_{\chi'} \to \mathcal{U}(\Gamma)_\chi\) by the universal property of the Ore localization.

\section{Crossed products and \texorpdfstring{\(*\)}{*}-regular closures}\label{sec-crossed-products}

We recall that a ring \(R'\) is called a \emph{crossed product} of a ring \(R\) and a group \(G\) if \(R\) can be identified with a subring of \(R'\) and there is an injective function \(\mu\colon G \to (R')^\times\) satisfying the following properties:
\begin{enumerate}[label=(CrPr\arabic*), wide]
    \item \(R'\) is a free left \(R\)-module on the set \(\mu(G)\);
    \item for all \(g \in G\), \(R\) is closed under conjugation by \(\mu(g)\); and
    \item for every pair of elements \(g\) and \(g'\) in \(G\), the element \[\tau(g,g') = \mu(g)\mu(g')\mu(gg')^{-1}\] belongs to \(R^\times\).
\end{enumerate}

In particular, \(\mu\) induces a function \(\overline{\mu}\colon G \to \Aut(R)\) that satisfies the following:
\begin{enumerate}[label=(CrPr\arabic*'), wide]
    \item for all \(g\), \(g'\) and \(g''\) in \(G\), we have that \[\tau(g,g')\tau(gg',g'') = \overline{\mu}(g)(\tau(g',g''))\tau(g,g'g'')\] in \(R^\times\); and
    \item for all \(g\) and \(g'\) in \(G\) and \(r \in R\), we have that \[\tau(g,g')\overline{\mu}(gg')(r)\tau(g,g')^{-1} = \overline{\mu}(g)(\overline{\mu}(g')(r))\] in \(R\).
\end{enumerate}

Conversely, given \(\overline{\mu}\colon G \to \Aut(R)\) and \(\tau\colon G\times G \to R^\times\) satisfying (CrPr1') and (CrPR2'), we can construct a ring \(R*G\) with underlying set the free \(R\)-module with basis \(G\) and multiplication obtained by linearly extending \[(rg)\cdot (r'g') = \big(r\mu(g)(r')\tau(g,g')\big)gg'\,\] for $g,g'\in G$ and $r,r'\in R$. If \(R'\) is a crossed product of \(R\) and \(G\), then there is an isomorphism of rings \(R' \simeq R*G\) that restricts to the identity map on \(R\), where \(R*G\) is constructed using the induced maps from \(R'\). If \(\psi\colon R' \to \bigoplus_{g \in G} R\mu(g)\) is an \(R\)-module isomorphism, we say that \((\psi,\mu,\tau)\) is an \emph{(internal) twisting data}. A pair \((\overline{\mu},\tau)\) is an \emph{(external) twisting data}. The considerations above show that there exists a bijection between internal and external twisting data, and any twisting data determines a crossed product up to an \(R\)-ring isomorphism.

The crossed product satisfies a universal property: if \((\overline{\mu},\tau)\) is a twisting data, \(\varphi\colon R \to R'\) is any ring homomorphism and \(\nu\colon G \to (R')^\times\) is any function such that (1) \(\nu(g)\varphi(r)\nu(g)^{-1} = \varphi(\overline{\mu}(g)r)\) and (2) \(\varphi(\tau(g,g')) = \nu(g)\nu(g')\nu(gg')^{-1}\), then there exists a unique ring homomorphism \(\theta\colon R*G \to R'\) extending both \(\varphi\) and \(\nu\).

Suppose now \(R\) is contained in a larger ring \(R'\) and \((\overline{\mu},\tau)\) is a twisting data on \(R\). If there exists a function \(\overline{\mu'}\colon G \to \Aut(R')\) such that (a) for every \(g \in G\) and \(r \in R\) we have that \(\mu'(g)(r) = \mu(g)(r)\) and (b) the pair \((\overline{\mu'},\tau)\) satisfies (CrPr2') on \(R'\), then this pair also satisfies (CrPr1') and we get an injective map \(R*G \to R'*G\) extending the inclusion \(R \to R'\) and the identity \(G \to G\), where the crossed products are built using their respective data. In this case, we say that the crossed product structure on \(R\) \emph{extends} to a crossed product structure on \(R'\).

We will now explore how the rings \(\bbc[\Gamma]_\chi\), \(\mathcal{N}(\Gamma)_\chi\) and \(\mathcal{U}(\Gamma)_\chi\) decompose as crossed products over finite quotients of \(\Gamma\). Let \(\Gamma'\) be a finite index normal subgroup of \(\Gamma\) and, if \(\chi\colon Z = Z(\Gamma) \to \bbc^\times\) is a central character, we denote by \(Z' = Z\cap \Gamma'\) and \(\chi'\colon Z' \to \bbc^\times\) the induced character. By taking \(\Gamma_\chi = \Gamma/Z\) and \(\Gamma'_{\chi'} = \Gamma'/Z'\), observe that we have an embedding \(\Gamma'_{\chi'} \leq \Gamma_\chi\), and the quotient \(\Gamma_\chi/\Gamma'_{\chi'}\) is isomorphic to \(\Gamma/(\Gamma'Z)\).

\begin{lem}\label{lem-first-crossed-prod} The following are true:
\begin{enumerate}
    \item The conjugation action of \(\Gamma\) on \(\Gamma'\) induces a twisting data \(\overline{\mu}\colon \Gamma_\chi/\Gamma'_{\chi'} \to \Aut(\bbc[\Gamma']_{\chi'})\) and \(\tau\colon (\Gamma_\chi/\Gamma'_{\chi'})^2 \to \bbc[\Gamma']_{\chi'}^\times\), and \(\bbc[\Gamma]_{\chi} \simeq \bbc[\Gamma']_{\chi'} * (\Gamma_\chi/\Gamma'_{\chi'})\).
    \item The crossed product structure on \(\bbc[\Gamma']_{\chi'}\) extends to a crossed product structure on \(\mathcal{N}(\Gamma')_{\chi'}\) and \(\mathcal{U}(\Gamma')_{\chi'}\).
    \item The subring of \(\mathcal{U}(\Gamma)_\chi\) generated by \(\mathcal{U}(\Gamma')_{\chi'}\) and \(\Gamma\) is isomorphic to the crossed product \(\mathcal{U}(\Gamma')_{\chi'} * (\Gamma_{\chi}/\Gamma_{\chi'})\).
\end{enumerate}
\end{lem}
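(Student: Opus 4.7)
The backbone of the proof is the Hilbert space decomposition $\ell^2(\Gamma)_\chi \simeq \bigoplus_{t \in T} \ell^2(\Gamma')_{\chi'}\cdot t$ from the proof of Lemma~\ref{lem-independence-of-subgroup}, where $T$ is a transversal for the cosets of $\Gamma'Z$ in $\Gamma$. For~(1), I would fix such a $T$ and set $\mu(t\Gamma'_{\chi'}) = t + N \in \bbc[\Gamma]_\chi$. Normality of $\Gamma'$ in $\Gamma$ together with centrality of $Z$ ensures that conjugation by $t$ descends to a $\bbc$-algebra automorphism $\overline{\mu}(t\Gamma'_{\chi'})$ of $\bbc[\Gamma']_{\chi'}$. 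The cocycle $\tau(t\Gamma'_{\chi'},t'\Gamma'_{\chi'})$ is the image in $\bbc[\Gamma]_\chi$ of $tt'\widetilde{tt'}^{-1} \in \Gamma'Z$; writing this element as $z\gamma'$ with $z \in Z$ and $\gamma' \in \Gamma'$, it equals $\chi(z)\gamma'+N$, which is a unit in the image of $\bbc[\Gamma']_{\chi'}$. Conditions (CrPr1') and (CrPr2') reduce to associativity of multiplication in $\Gamma$. Finally, if $T'$ is a transversal for $\Gamma'/Z'$, then $T'T$ is a transversal for $\Gamma/Z$ and Lemma~\ref{lem-transversal-bases}(2) says its image is a $\bbc$-basis of $\bbc[\Gamma]_\chi$; equivalently, the image of $T$ freely generates $\bbc[\Gamma]_\chi$ as a left $\bbc[\Gamma']_{\chi'}$-module, giving the isomorphism via the universal property of crossed products.

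For~(2), I would extend $\overline{\mu}$ by letting $\overline{\mu}'(t\Gamma'_{\chi'})$ be conjugation by the unitary $\mu(t\Gamma'_{\chi'})$ inside the ambient algebra $\mathcal{N}(\Gamma)_\chi$, in which $\mathcal{N}(\Gamma')_{\chi'}$ sits by Lemma~\ref{lem-independence-of-subgroup}. Since conjugation by a unitary is weakly continuous and sends $\bbc[\Gamma']_{\chi'}$ into itself, it preserves its weak closure, namely $\mathcal{N}(\Gamma')_{\chi'}$. Condition (CrPr2') is automatic from this definition, as $\mu(t)\mu(t')r\mu(t')^{-1}\mu(t)^{-1}=\tau\,\overline{\mu}'(tt')(r)\,\tau^{-1}$ holds identically in $\mathcal{N}(\Gamma)_\chi$, while (CrPr1') is inherited from~(1) since $\tau$ is unchanged. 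The extension to $\mathcal{U}(\Gamma')_{\chi'}$ follows because a ring automorphism preserves non-zero-divisors and hence lifts to the Ore localization, and $\tau$ already takes values in $\bbc[\Gamma']_{\chi'}^\times \subseteq \mathcal{U}(\Gamma')_{\chi'}^\times$.

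For~(3), the universal property of crossed products yields a ring map $\varphi\colon \mathcal{U}(\Gamma')_{\chi'}*(\Gamma_\chi/\Gamma'_{\chi'}) \to \mathcal{U}(\Gamma)_\chi$ whose image is the subring generated by $\mathcal{U}(\Gamma')_{\chi'}$ and $T$, which coincides with that generated by $\mathcal{U}(\Gamma')_{\chi'}$ and (the image of) $\Gamma$, since every $\gamma \in \Gamma$ decomposes as $zt't$ with $z+N \in \bbc$, $t'+N \in \Gamma'_{\chi'}$ and $t \in T$. The core task is injectivity, which I would split in two. First, $\mathcal{N}(\Gamma')_{\chi'}*(\Gamma_\chi/\Gamma'_{\chi'}) \to \mathcal{N}(\Gamma)_\chi$ is injective: evaluating any kernel element $\sum_t A_t\mu(t\Gamma'_{\chi'})$ at the separating vector $1 \in \ell^2(\Gamma)_\chi$ and exploiting the direct sum decomposition forces each $A_t(1)$, and hence each $A_t$, to vanish. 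Second, I would identify $\mathcal{U}(\Gamma')_{\chi'}*(\Gamma_\chi/\Gamma'_{\chi'})$ with the Ore localization of $\mathcal{N}(\Gamma')_{\chi'}*(\Gamma_\chi/\Gamma'_{\chi'})$ at the set $S$ of non-zero-divisors of $\mathcal{N}(\Gamma')_{\chi'}$: since $S$ is preserved by the extended twisting automorphisms, it is an Ore set in the crossed product and localization commutes with the crossed product construction. As each $s \in S$ acts summand-by-summand on $\ell^2(\Gamma)_\chi$ with dense image and trivial kernel, it remains a non-zero-divisor in $\mathcal{N}(\Gamma)_\chi$ and hence a unit in $\mathcal{U}(\Gamma)_\chi$, so the universal property of the Ore localization delivers the required injectivity of $\varphi$. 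The main technical obstacle is this compatibility between Ore localization and the crossed product construction, together with the persistence of non-zero-divisors when passing from $\mathcal{N}(\Gamma')_{\chi'}$ to $\mathcal{N}(\Gamma)_\chi$; everything else is bookkeeping.
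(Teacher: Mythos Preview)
Your proposal is correct and follows essentially the same line as the paper's proof: in~(1) you choose a transversal, build $\overline{\mu}$ by conjugation and $\tau$ from the cocycle identity in $\Gamma$, and invoke Lemma~\ref{lem-transversal-bases}(2) exactly as the paper does; in~(2) both you and the paper extend $\overline{\mu}$ by conjugating inside the ambient algebra $\mathcal{N}(\Gamma)_\chi$ (you phrase it via weak continuity, the paper via evaluation at the cyclic vector) and then pass to $\mathcal{U}(\Gamma')_{\chi'}$ by the universal property of the Ore localization.

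The only noteworthy difference is in~(3). The paper simply asserts that injectivity of $\varphi$ amounts to $\mathcal{N}(\Gamma')_{\chi'}$-linear independence of the transversal, which follows at once from the Hilbert decomposition; it leaves the passage from $\mathcal{N}(\Gamma')_{\chi'}$-independence to $\mathcal{U}(\Gamma')_{\chi'}$-independence implicit (a common left denominator argument). Your route, identifying $\mathcal{U}(\Gamma')_{\chi'}*(\Gamma_\chi/\Gamma'_{\chi'})$ with the Ore localization of $\mathcal{N}(\Gamma')_{\chi'}*(\Gamma_\chi/\Gamma'_{\chi'})$ at $S$ and checking that elements of $S$ stay non-zero-divisors in $\mathcal{N}(\Gamma)_\chi$, is a cleaner way of making that step explicit. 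Both arguments rest on the same Hilbert decomposition and lead to the same conclusion.
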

\begin{proof} (1) Fix a set of representatives \(T'\) for \(\Gamma'/Z'\) and \(X\) for \(\Gamma/(\Gamma'Z)\), so that \(T = T'X\) is a transversal for \(\Gamma/Z\) and the projection \(\Gamma \to \Gamma_{\chi}/\Gamma'_{\chi'}\) is bijective when restricted to \(X\). For each \(g \in \Gamma_{\chi}/\Gamma_{\chi'}\), let \(x_g\) be its representative in \(X\). For any pair \(g\) and \(g'\) in \(\Gamma_{\chi}/\Gamma_{\chi'}\), there is a unique \(u = u(g,g') \in \Gamma'Z\) such that \(x_gx_{g'} = ux_{gg'}\). Since \(\Gamma'/Z'\) is isomorphic to \((\Gamma'Z)/Z\) one can also find unique elements \(h = h(g,g') \in T'\) and \(z = z(g,g') \in Z\) such that \(u = zh\).

Define \(\overline{\mu} \in \Aut(\bbc[\Gamma']_{\chi'})\) be the conjugation by \(x_g\). Set \(\tau(g,g') = \chi(z)h\) where \(z\) and \(h\) are defined as above. A direct computation shows that \((\overline{\mu},\tau)\) satisfies (CrPr1') and (CrPr2') and thus we can construct the crossed product \(\bbc[\Gamma']_{\chi'}*(\Gamma_\chi/\Gamma'_{\chi'})\). The injective map \(\bbc[\Gamma']_{\chi'} \to \bbc[\Gamma]_{\chi'}\) and the function \(\nu\colon \Gamma_\chi/\Gamma'_{\chi'} \to \bbc[\Gamma]_{\chi}^\times\) that sends \(g\) to the image of \(x_g\) induce, by the universal property of the crossed product, a ring homomorphism \(\bbc[\Gamma']_{\chi'}*(\Gamma_\chi/\Gamma'_{\chi'}) \to \bbc[\Gamma]_\chi\) which is readily seen to be bijective being an isomorphism of left \(\bbc[\Gamma']_{\chi'}\)-modules by Lemma~\ref{lem-transversal-bases}(2).

(2) Since the image of \(x_g\) in both \(\mathcal{N}(\Gamma)_\chi\) and \(\mathcal{U}(\Gamma)_\chi\) consists of units that normalize \(\Gamma'\), one only needs to show that \((\overline{\mu},\tau)\) also satisfies (CrPr2') on \(\mathcal{N}(\Gamma')_{\chi'}\) and \(\mathcal{U}(\Gamma')_{\chi'}\) respectively. For \(\mathcal{N}(\Gamma')_{\chi'}\) it suffices to show that the actions of both sides of (CrPr2'), seen as operators of \(\ell^2(\Gamma')_{\chi'}\), coincide on \(\bbc[\Gamma']_{\chi'}\). In fact, it suffices to show that they send the identity element to the same element, which follows from any \(r \in \mathcal{N}(\Gamma')_{\chi'}\) acting on \(\bbc[\Gamma']_{\chi'}\) as multiplication by some element in \(\ell^2(\Gamma')_{\chi'}\). For \(\mathcal{U}(\Gamma')_{\chi'}\), observe that both sides of (CrPr2') define automorphisms of \(\mathcal{U}(\Gamma')_{\chi'}\) that coincide on \(\mathcal{N}(\Gamma')_{\chi'}\), so they must be the same.

(3) This subring coincides with the image of the induced map \(\varphi\colon \mathcal{U}(\Gamma')_{\chi'} * (\Gamma_\chi/\Gamma'_{\chi'}) \to \mathcal{U}(\Gamma)_\chi\), so it suffices to prove that this map is injective, that is, that the set \(X\) is linearly independent over \(\mathcal{N}(\Gamma')_{\chi'}\) as operators on \(\ell^2(\Gamma)_\chi\). This is immediate from the identification of \(\ell^2(\Gamma)_\chi\) with the Hilbert space \(\bigoplus_{x \in X} \ell^2(\Gamma')_{\chi'}x\).
\end{proof}

We recall that in a \(*\)-regular ring \(R'\), for every element \(r \in R'\) there are unique projections \(e\) and \(f\) such that \(R'e = R'r\) and \(fR' = rR'\) \cite[Prop. 3.2(3)]{jaikin-zapirain_base_2019}. Moreover, there exists a unique element \(r^{[-1]}\), called the relative inverse of \(r\), such that \(r^{[-1]}r = e\) and \(rr^{[-1]} = f\) \cite[Prop. 3.2(4)]{jaikin-zapirain_base_2019}.

If \(R\) is a subring of a \(*\)-regular ring \(R'\), then there exists a unique minimal \(*\)-regular subring \(\mathcal{R}(R,R')\) of \(R'\) containing \(R\), called the \emph{\(*\)-regular closure} of \(R\) in \(R'\) \cite[Sec. 3.4]{jaikin-zapirain_base_2019}. This ring is obtained inductively by setting \(\mathcal{R}_0(R,R') = R\) and taking \(\mathcal{R}_{i+1}(R,R')\) to be the subring generated by \(\mathcal{R}_i(R,R')\) and the relative inverses of all the elements \(rr^*\) for \(r \in \mathcal{R}_i(R,R')\), see \cite[Prop. 6.2]{ara_realization_2017}.

\begin{lem} Let \(L\) be a subfield of \(\bbc\) containing \(L_0\) and \(\Gamma'\) be a finite index normal subgroup of \(\Gamma\). Then, we also get a decomposition \(L[\Gamma]_{\chi} \simeq L[\Gamma']_{\chi'}*(\Gamma_\chi/\Gamma'_{\chi'})\), the crossed product structure on \(L[\Gamma']_{\chi'}\) extends to \[\mathcal{R}(\Gamma')_{\chi'}^L = \mathcal{R}(L[\Gamma']_{\chi'},\mathcal{U}(\Gamma')_{\chi'})\] and the subring of \(\mathcal{R}(\Gamma)_\chi^L\) generated by \(\mathcal{R}(\Gamma')_{\chi'}^L\) and \(\Gamma\) is isomorphic to \(\mathcal{R}(\Gamma')_{\chi'}^L * (\Gamma_\chi/\Gamma'_{\chi'})\).
\end{lem}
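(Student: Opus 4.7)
The plan is to upgrade Lemma~\ref{lem-first-crossed-prod} verbatim, first by replacing \(\bbc\) with \(L\) and then by replacing \(\mathcal{U}(\Gamma')_{\chi'}\) with its \(*\)-regular subring \(\mathcal{R}(\Gamma')_{\chi'}^L\). For the decomposition \(L[\Gamma]_\chi \simeq L[\Gamma']_{\chi'} * (\Gamma_\chi/\Gamma'_{\chi'})\), I would choose transversals \(T'\) and \(X\) exactly as in part~(1) of Lemma~\ref{lem-first-crossed-prod}, and define the twisting data \((\overline{\mu},\tau)\) by conjugation by \(x_g \in \Gamma\) and by the values \(\chi(z(g,g'))\). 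The values of \(\chi\) on the finite group \(Z\) are roots of unity, which we may freely assume lie in \(L\) (enlarging \(L_0\) if needed), so the construction stays inside \(L[\Gamma']_{\chi'}\). The universal property of the crossed product then provides an isomorphism onto \(L[\Gamma]_\chi\) just as before.

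The main new point is to extend the crossed product structure from \(L[\Gamma']_{\chi'}\) to \(\mathcal{R}(\Gamma')_{\chi'}^L\). By Lemma~\ref{lem-first-crossed-prod}(2), each \(\overline{\mu}(g)\) extends to a \(*\)-automorphism of \(\mathcal{U}(\Gamma')_{\chi'}\) preserving \(L[\Gamma']_{\chi'}\). I would then observe that \(\overline{\mu}(g)(\mathcal{R}(\Gamma')_{\chi'}^L)\) is again a \(*\)-regular subring of \(\mathcal{U}(\Gamma')_{\chi'}\) containing \(L[\Gamma']_{\chi'}\), so by the minimality of the \(*\)-regular closure it must coincide with \(\mathcal{R}(\Gamma')_{\chi'}^L\). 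Equivalently, one verifies this inductively along the construction \(\mathcal{R}_i \mapsto \mathcal{R}_{i+1}\), using that \(\overline{\mu}(g)\) being a \(*\)-automorphism commutes with \(r \mapsto rr^*\) and preserves relative inverses by their uniqueness. Since \(\tau\) already takes values in \(L[\Gamma']_{\chi'}^\times \subseteq \mathcal{R}(\Gamma')_{\chi'}^L{}^{\times}\), both cocycle conditions (CrPr1') and (CrPr2') persist, yielding the crossed product \(\mathcal{R}(\Gamma')_{\chi'}^L * (\Gamma_\chi/\Gamma'_{\chi'})\).

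For the final claim, I first need that \(\mathcal{R}(\Gamma')_{\chi'}^L\) naturally sits inside \(\mathcal{R}(\Gamma)_\chi^L\). This rests on the general fact that if \(R'\) is a \(*\)-regular \(*\)-subring of a \(*\)-regular ring \(R''\) and \(R \subseteq R'\), then \(\mathcal{R}(R,R') = \mathcal{R}(R,R'')\), because the iterative procedure of adjoining relative inverses of the elements \(rr^*\) produces the same elements in both ambient rings (relative inverses being unique). Applied to \(L[\Gamma']_{\chi'} \subseteq \mathcal{U}(\Gamma')_{\chi'} \subseteq \mathcal{U}(\Gamma)_\chi\), this gives \(\mathcal{R}(\Gamma')_{\chi'}^L \subseteq \mathcal{R}(\Gamma)_\chi^L\). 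The universal property then provides a ring homomorphism
\[
\theta\colon \mathcal{R}(\Gamma')_{\chi'}^L * (\Gamma_\chi/\Gamma'_{\chi'}) \longrightarrow \mathcal{R}(\Gamma)_\chi^L
\]
whose image is exactly the subring generated by \(\mathcal{R}(\Gamma')_{\chi'}^L\) and \(\Gamma\).

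What remains is injectivity of \(\theta\), i.e.\ linear independence of \(X\) over \(\mathcal{R}(\Gamma')_{\chi'}^L\) inside \(\mathcal{R}(\Gamma)_\chi^L\). This is immediate: Lemma~\ref{lem-first-crossed-prod}(3) already establishes linear independence of \(X\) over the larger ring \(\mathcal{U}(\Gamma')_{\chi'}\), so the same holds over its subring \(\mathcal{R}(\Gamma')_{\chi'}^L\). The only conceptually new step is the stability of the \(*\)-regular closure under conjugation, and this is the place where I expect any difficulty to concentrate; everything else is a bookkeeping transcription of Lemma~\ref{lem-first-crossed-prod} with \(L\) in place of \(\bbc\) and \(\mathcal{R}\) in place of \(\mathcal{U}\).
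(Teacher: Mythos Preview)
Your proposal is correct and follows essentially the same route as the paper: the decomposition over \(L\) is handled identically, the stability of \(\mathcal{R}(\Gamma')_{\chi'}^L\) under conjugation by \(x_g\) is the key point (the paper verifies it inductively along \(\mathcal{R}_i \to \mathcal{R}_{i+1}\) by checking \(x r^{[-1]} x^{-1} = (xrx^{-1})^{[-1]}\), while your minimality argument is a clean repackaging of the same observation), and injectivity is deduced by restriction from the \(\mathcal{U}(\Gamma')_{\chi'}\) case of Lemma~\ref{lem-first-crossed-prod}(3). You also make explicit two small points the paper glosses over---that the values of \(\chi\) must lie in \(L\), and that \(\mathcal{R}(\Gamma')_{\chi'}^L \subseteq \mathcal{R}(\Gamma)_\chi^L\) because relative inverses agree in the two ambient \(*\)-regular rings.
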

\begin{proof} The decomposition for \(L[\Gamma]_{\chi}\) is proven in the same way as for \(\bbc[\Gamma]_{\chi'}\) in Lemma~\ref{lem-first-crossed-prod}(1).

Assume now that we have already extended the twisting data for \(L[\Gamma']_{\chi'}\) to \(\mathcal{R}_i(L[\Gamma']_{\chi'},\mathcal{U}(\Gamma')_{\chi'})\). To show that it extends to \(\mathcal{R}_{i+1}(L[\Gamma']_{\chi'},\mathcal{U}(\Gamma')_{\chi'})\), all we must show is that this ring is invariant under conjugation by \(x \in X\), where \(X\) is the chosen transversal of \(\Gamma_\chi/\Gamma'_{\chi'}\) for the twisting data, since it will inherit (CrPr2') from the crossed product decomposition of \(\mathcal{U}(\Gamma)_{\chi}\). 

Hence, let \(r \in \mathcal{R}_{i}(L[\Gamma']_{\chi'},\mathcal{U}(\Gamma')_{\chi'})\) be any self-adjoint element and \(e,f \in \mathcal{U}(\Gamma')_{\chi'}\) be projections such that \(\mathcal{U}(\Gamma')_{\chi'}r = \mathcal{U}(\Gamma')_{\chi'}e\) and \(r\mathcal{U}(\Gamma')_{\chi'} = f\mathcal{U}(\Gamma')_{\chi'}\). Then, for any \(x \in X\), we have that \(xex^{-1}\) and \(xfx^{-1}\) must be the unique projections such that \[\mathcal{U}(\Gamma')_{\chi'}(xrx^{-1}) = x(\mathcal{U}(\Gamma')_{\chi'}r)x^{-1} = x(\mathcal{U}(\Gamma')_{\chi'}e)x^{-1} = \mathcal{U}(\Gamma')_{\chi'}(xex^{-1})\,,\]
\[(xrx^{-1})\mathcal{U}(\Gamma')_{\chi'} = x(r\mathcal{U}(\Gamma')_{\chi'})x^{-1} = x(f\mathcal{U}(\Gamma')_{\chi'})x^{-1} = (xfx^{-1})\mathcal{U}(\Gamma')_{\chi'}\,.\]
This shows that \(xr^{[-1]}x^{-1}\) must be the relative inverse of \(xrx^{-1}\). Hence, we have that \[xr^{[-1]}x^{-1} = (xrx^{-1})^{[-1]} \in \mathcal{R}_{i+1}(L[\Gamma']_{\chi'},\mathcal{U}(\Gamma')_{\chi'})\] as desired.

From the universal property of crossed products, we get a ring homomorphism \(\theta\colon \mathcal{R}(\Gamma')_{\chi'}^L * (\Gamma_{\chi}/\Gamma_{\chi'}) \to \mathcal{U}(\Gamma)_{\chi}\) which is the restriction of the injective map \(\mathcal{U}(\Gamma')_{\chi'} * (\Gamma_\chi/\Gamma'_{\chi'}) \to \mathcal{U}(\Gamma)_{\chi}\), proving the last claim.
\end{proof}

\begin{rmk}\label{rmk-center-*-regular} Observe that if \(\mathcal{N}(\Gamma)_\chi\) is a factor, then by Lemma~\ref{lem-affiliated-op}(e) the center of \(\mathcal{R}(\Gamma)_{\chi}^L\) is contained in the center of \(\mathcal{U}(\Gamma)_{\chi}\), and is therefore a subfield of \(\bbc\).
\end{rmk} 

We are now able to describe how we are going to make use of Lemma~\ref{lem-factors}. Conjecture~\ref{lie-luck-conj} states that for any finitely generated subgroup \(\Gamma\) of \(\schG\), for every sequence \(\{\scheme W_k\}\) of representations of \(\schG\) with the same central character \(\chi\colon \scheme Z (\schG) \cap \Gamma \to \bbc^\times\) such that their highest weight parameters \(\bm\lambda = (\lambda_1,\ldots,\lambda_n)\) (\(\lambda_i = \lambda_i(k)\)) grow to infinity, and for every finitely generated subfield \(L\) of \(\bbc\) containing \(L_0\), we have that \[\lim_{k\to \infty} \rk_{\Wmodk{L}}^{\Gamma} = \rk_{\Gamma}^\chi\] as rank functions. Indeed, for each matrix \(A\) over \(\bbc[\Gamma]\), it suffices to take \(L\) the subfield generated by the coefficients of those entries, as we have that \(\rk^\schG_{\Wmodk{\bbc}} A = \rk^\Gamma_{\Wmodk{L}} A\). We may further assume that \(\Gamma\) is Zariski-dense, so that \(Z\) coincides with the center of \(\Gamma\) and \(\Gamma/Z\) is ICC.

Suppose now that we can find a normal subgroup \(\Gamma'\) of \(\Gamma\) of finite index such that, if \(\chi'\) denotes the restriction of \(\chi\) to \(Z\cap \Gamma'\), we have the convergence \[\lim_{k \to \infty} \rk^{\Gamma'}_{\Wmodk{L}} = \rk_{\Gamma'}^{\chi'}\] of Sylvester matrix rank functions on \(L[\Gamma']\). We will show that this already implies the desired convergence for \(\Gamma\) over \(L\).

\begin{thm}\label{thm-first-red} Let \(\Gamma\), \(\Gamma'\), \(\chi\), \(\chi'\) and \(\Wmodk{}\) be as above. If \(\rk^{\Gamma'}_{\Wmodk{L}}\) converges to \(\rk_{\Gamma'}^{\chi'}\), then \(\rk^{\Gamma}_{\Wmodk{L}}\) converges to \(\rk_{\Gamma}^{\chi}\).
\end{thm}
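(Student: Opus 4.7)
The plan is to combine a compactness reduction with a uniqueness result for rank functions on factor von Neumann algebras, using the crossed product decomposition of $L[\Gamma]_\chi$ to bridge the gap.

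First, by compactness of the space of Sylvester matrix rank functions on $L[\Gamma]$ in the pointwise topology, it suffices to show that every accumulation point of the family $\{\rk^\Gamma_{\Wmodk{L}}\}$ coincides with $\rk^\chi_\Gamma$. So I fix a non-principal ultrafilter $\omega$ on the set of weights and let $\rho = \lim_\omega \rk^\Gamma_{\Wmodk{L}}$. Since every $z\in Z$ acts on $\Wmodk{L}$ as the scalar $\chi(z)$, each $\rk^\Gamma_{\Wmodk{L}}$ (and hence $\rho$) factors through the quotient $L[\Gamma]\twoheadrightarrow L[\Gamma]_\chi$, and $\rk^\chi_\Gamma$ is likewise defined on $L[\Gamma]_\chi$.

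Next, for any matrix $A$ over $L[\Gamma']$ the tensor-product identity
\[
(L[\Gamma]^r/AL[\Gamma]^s)\otimes_{L[\Gamma]}\Wmodk{L}\;\simeq\;(L[\Gamma']^r/AL[\Gamma']^s)\otimes_{L[\Gamma']}\Wmodk{L}
\]
gives $\rk^\Gamma_{\Wmodk{L}}(A) = \rk^{\Gamma'}_{\Wmodk{L}}(A)$, and Lemma~\ref{lem-independence-of-subgroup} gives the analogue $\rk^\chi_\Gamma(A) = \rk^{\chi'}_{\Gamma'}(A)$. Combined with the hypothesis $\rk^{\Gamma'}_{\Wmodk{L}}\to \rk^{\chi'}_{\Gamma'}$, this produces the key identity
\[
\rho|_{L[\Gamma']_{\chi'}}\;=\;\rk^{\chi'}_{\Gamma'}\;=\;\rk^\chi_\Gamma|_{L[\Gamma']_{\chi'}}.
\]

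The principal obstacle is then to promote this agreement on the finite-index subring to agreement on all of $L[\Gamma]_\chi$. I may assume without loss of generality that $\Gamma$ is Zariski-dense in $\schG$, so that $Z$ coincides with the centre of $\Gamma$ and $\Gamma/Z$ is ICC; Lemma~\ref{lem-factors} then makes $\mathcal{N}(\Gamma)_\chi$ a factor, and Remark~\ref{rmk-center-*-regular} gives that the $*$-regular ring $\mathcal{U}(\Gamma)_\chi$ has centre $\bbc$. Since a directly finite $*$-regular ring whose centre is a field admits a unique Sylvester matrix rank function with $\rk(1)=1$, it suffices to produce extensions of both $\rho$ and $\rk^\chi_\Gamma$ to rank functions on $\mathcal{U}(\Gamma)_\chi$: both will then coincide with this unique rank, and in particular agree on $L[\Gamma]_\chi$.

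To build the extensions I would use the crossed-product decomposition $L[\Gamma]_\chi = L[\Gamma']_{\chi'}*G$ with $G = \Gamma_\chi/\Gamma'_{\chi'}$ finite (Lemma~\ref{lem-first-crossed-prod}). The rank $\rk^{\chi'}_{\Gamma'}$ extends uniquely to the $*$-regular closure $\mathcal{R}(\Gamma')_{\chi'}^L$, and the crossed product structure lifts along the chain
\[
L[\Gamma]_\chi \;\hookrightarrow\; \mathcal{R}(\Gamma')_{\chi'}^L *G \;\hookrightarrow\; \mathcal{R}(\Gamma)_\chi^L \;\hookrightarrow\; \mathcal{U}(\Gamma)_\chi.
\]
The main technical delicacy I anticipate is justifying that both $\rho$ and $\rk^\chi_\Gamma$ extend uniquely along the first two embeddings: using the normal form $\sum_{x\in X}a_x x$ with $a_x\in L[\Gamma']_{\chi'}$, one checks that the rank of an element of $L[\Gamma]_\chi$ is forced by the extension of $\rk^{\chi'}_{\Gamma'}$ to $\mathcal{R}(\Gamma')_{\chi'}^L$ together with the crossed-product structure, which realises $\mathcal{R}(\Gamma')_{\chi'}^L*G$ inside the factor $\mathcal{U}(\Gamma)_\chi$. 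Once both extensions exist, the factor uniqueness forces $\rho = \rk^\chi_\Gamma$ on $\mathcal{U}(\Gamma)_\chi$ and hence on $L[\Gamma]_\chi$; since $\omega$ was arbitrary, the full sequence $\rk^\Gamma_{\Wmodk{L}}$ converges to $\rk^\chi_\Gamma$.
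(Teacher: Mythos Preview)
Your overall strategy---reduce to ultralimits, use the crossed-product decomposition, and invoke uniqueness of rank functions on a $*$-regular ring whose centre is a field---matches the paper's. The gap is in the step you yourself flag as ``the main technical delicacy'': producing an extension of $\rho = \lim_\omega \rk^\Gamma_{\Wmodk{L}}$ from $L[\Gamma]_\chi$ to $\mathcal{R}(\Gamma)_\chi^L$ (or, in your version, all the way to $\mathcal{U}(\Gamma)_\chi$). Your proposed justification---that ``the rank of an element of $L[\Gamma]_\chi$ is forced by the extension of $\rk^{\chi'}_{\Gamma'}$ to $\mathcal{R}(\Gamma')_{\chi'}^L$ together with the crossed-product structure''---does not work as stated: knowing that a rank function on $R*G$ restricts to a given rank on $R$ does not determine it, and there is no general mechanism to extend a rank on $R*G$ to $R'*G$ along $R\hookrightarrow R'$ merely from an extension of the rank on $R$. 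So at this point you have two ranks on $L[\Gamma]_\chi$ agreeing on $L[\Gamma']_{\chi'}$, but no argument yet that $\rho$ lives on the larger $*$-regular ring where uniqueness applies.

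The paper closes this gap by never trying to extend $\rho$ abstractly. Instead it realises $\rho$ as the pullback of the canonical rank on the tracial ultraproduct $\mathcal{D} = \prod_\omega \End_\bbc(\Wmodk{})$ along the ring homomorphism $L[\Gamma]_\chi \to \mathcal{D}$ induced by the representations. The hypothesis $\lim_\omega \rk^{\Gamma'}_{\Wmodk{L}} = \rk^{\chi'}_{\Gamma'}$ then gives, via \cite[Thm.~6.3]{jaikin-zapirain_base_2019}, a rank-preserving isomorphism between $\mathcal{R}(\Gamma')_{\chi'}^L$ and the $*$-regular closure $\mathcal{S}$ of the image of $L[\Gamma']_{\chi'}$ in $\mathcal{D}$. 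The universal property of the crossed product now yields a genuine ring homomorphism $\theta\colon \mathcal{R}(\Gamma)_\chi^L \simeq \mathcal{R}(\Gamma')_{\chi'}^L * (\Gamma_\chi/\Gamma'_{\chi'}) \to \mathcal{D}$ extending $L[\Gamma]_\chi \to \mathcal{D}$, and pulling back the rank on $\mathcal{D}$ along $\theta$ is the required extension of $\rho$. Uniqueness is then applied on $\mathcal{R}(\Gamma)_\chi^L$ itself (not $\mathcal{U}(\Gamma)_\chi$), whose centre is a subfield of $\bbc$ by Remark~\ref{rmk-center-*-regular}.
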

\begin{proof} Let \(\omega\) be a non-principal ultrafilter on \(\bbn\), and consider the ultraproduct of tracial \(*\)-regular rings \[\mathcal{D} = \prod_{\omega} \End_{\bbc}(\Wmodk{})\] as described in \cite[Sec. 6.7]{jaikin-zapirain_base_2019}. The representations of \(\Gamma'\) induce a homomorphism \(L[\Gamma'] \to \mathcal{D}\) whose \(*\)-regular closure we denote by \(\mathcal{S}\). The fact that \[\rk^{\Gamma'}_{\chi'} = \lim_{\omega} \rk^{\Gamma'}_{\Wmodk{L}}\] as rank functions over \(L[\Gamma']\) implies by \cite[Thm. 6.3]{jaikin-zapirain_base_2019} that we have a rank preserving isomorphism \(\varphi\colon \mathcal{R}(\Gamma') \to \mathcal{S}\) that commutes with the maps \(L[\Gamma'] \to \mathcal{R}(\Gamma')\) and \(L[\Gamma'] \to \mathcal{S}\).

The representations of \(\Gamma\) also induce a homomorphism \(L[\Gamma] \to \mathcal{D}\) that clearly extends the map \(L[\Gamma'] \to \mathcal{D}\). By the universal property of the crossed product, we obtain a ring homomorphism \(\theta\colon \mathcal{R}(\Gamma)_{\chi}^L \simeq \mathcal{R}(\Gamma')_{\chi'}^L * (\Gamma_\chi/\Gamma'_{\chi'}) \to \mathcal{D}\) that extends \(\varphi\). Since \(\Gamma\) is Zariski-dense, \(\Gamma/Z\) is ICC and thus, by Lemma~\ref{lem-factors} and Remark~\ref{rmk-center-*-regular}, the center of \(\mathcal{R}(\Gamma)_{\chi}^L\) is a subfield of \(\bbc\). 

It was shown in \cite[Thm. 19.13, 19.14]{goodearl_von_1979} (cf. \cite[Prop. 5.7]{jaikin-zapirain_base_2019}) that \(\mathcal{R}(\Gamma)_{\chi}^L\) admits at most one Sylvester matrix rank function. Since both \(\rk^\Gamma_\chi\) and \(\lim_\omega \rk^\Gamma_{\Wmodk{L}}\) define Sylvester matrix rank functions on \(\mathcal{R}(\Gamma)_{\chi}^L\), they must also coincide. The result for general limits then follows, as the choice of \(\omega\) was arbitrary.
\end{proof}

\begin{cor}\label{cor-simplified-conjecture} Conjecture~\ref{lie-luck-conj} holds for any finitely generated subgroup of \(\schG\) if and only if for every finitely generated and Zariski-dense subgroup \(\Gamma\) of \(\schG\) with \(\Gamma \leq \schG(L)\) and \(L \leq \bbc\) finitely generated, there exists a finite index normal subgroup \(\Gamma'\) of \(\Gamma\) such that \[\lim_{k \to \infty} \rk^{\Gamma'}_{\scheme W_k} = \rk^{\chi'}_{\Gamma'}\] whenever all the central characters of \(\scheme W_k\) are equal to \(\chi\), the parameters of the highest weight \(\lambda_i = \lambda_i(k)\) grow to infinity and \(\chi'\) is the restriction of \(\chi\) to \(\scheme{Z}(\schG) \cap \Gamma'\).
\end{cor}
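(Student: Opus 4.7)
The forward direction is tautological: assuming Conjecture~\ref{lie-luck-conj}, for a finitely generated Zariski-dense \(\Gamma \leq \schG(L)\) we simply take \(\Gamma' = \Gamma\) (so \(\chi' = \chi\)) and invoke Remark~\ref{remark-dim-extensions} to transport the convergence from \(\bbc[\Gamma]\) down to \(L[\Gamma]\).

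For the backward direction, the plan is to localize the convergence at a single matrix, absorb its entries into a finitely generated subfield, and then apply Theorem~\ref{thm-first-red}. More precisely, fix a finitely generated Zariski-dense subgroup \(\Gamma \leq \schG\) and a sequence \(\{\Wmodk{}\}\) of irreducible \(\schG\)-modules with common central character \(\chi\) on \(Z = \scheme Z(\schG) \cap \Gamma\) and with \(\min_i \lambda_i(k) \to \infty\). The Zariski-density of \(\Gamma\) guarantees \(Z = Z(\Gamma)\) and \(\Gamma/Z\) ICC, so Lemma~\ref{lem-factors} ensures \(\mathcal{N}(\Gamma)_\chi\) is a factor --- this is precisely the input that powers Theorem~\ref{thm-first-red}. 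Since Sylvester matrix rank function convergence is pointwise, I would take an arbitrary matrix \(A\) over \(\bbc[\Gamma]\), let \(S\) be a finitely generated subring of \(\bbc\) containing \(S_0\) and the entries of \(A\), and set \(L = \operatorname{Frac}(S)\); Remark~\ref{remark-dim-extensions} then gives \(\rk^\Gamma_{\Wmodk{}}(A) = \rk^\Gamma_{\Wmodk{L}}(A)\), so the goal reduces to convergence on \(L[\Gamma]\).

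From here the rest is immediate: the hypothesis of the corollary, applied to this \(L\), produces a finite index normal subgroup \(\Gamma' \trianglelefteq \Gamma\) with \(\rk^{\Gamma'}_{\Wmodk{L}} \to \rk^{\chi'}_{\Gamma'}\) on \(L[\Gamma']\), where \(\chi'\) is the restriction of \(\chi\) to \(\scheme Z(\schG) \cap \Gamma'\). Theorem~\ref{thm-first-red} then propagates this convergence from \(\Gamma'\) up to \(\Gamma\), yielding \(\rk^\Gamma_{\Wmodk{L}}(A) \to \rk^\chi_\Gamma(A)\); since \(A\) was arbitrary, this establishes Conjecture~\ref{lie-luck-conj} for \(\Gamma\).

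All the real substance is already contained in Theorem~\ref{thm-first-red}: the ultraproduct of the endomorphism algebras \(\End_\bbc(\Wmodk{})\), the identification of the \(*\)-regular closure \(\mathcal{R}(\Gamma')_{\chi'}^L\) as a rank-preserving image, the extension of this isomorphism across the crossed product \(\mathcal{R}(\Gamma)_\chi^L \simeq \mathcal{R}(\Gamma')_{\chi'}^L * (\Gamma_\chi / \Gamma'_{\chi'})\), and the uniqueness of a Sylvester matrix rank function on a \(*\)-regular ring with scalar center. The only non-routine point inside the corollary itself is the passage to a Zariski-dense \(\Gamma\), which is forced by the factoriness hypothesis of Lemma~\ref{lem-factors} but costs nothing for our purposes --- the rank functions of interest depend only on the image of \(\Gamma\) in \(\schG\).
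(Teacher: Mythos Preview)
Your argument is correct and matches the paper's approach: the corollary is not given its own proof in the paper, being presented as an immediate consequence of the discussion preceding Theorem~\ref{thm-first-red} together with that theorem, and you have reconstructed exactly that deduction (localize at a matrix, pass to a finitely generated \(L\) via Remark~\ref{remark-dim-extensions}, invoke the hypothesis to get \(\Gamma'\), then propagate up via Theorem~\ref{thm-first-red}).

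One small imprecision: your justification for the reduction to Zariski-dense \(\Gamma\), ``the rank functions of interest depend only on the image of \(\Gamma\) in \(\schG\)'', does not actually explain anything, since \(\Gamma\) already \emph{is} its image. The correct reason (which the paper also leaves implicit with the bare phrase ``we may further assume that \(\Gamma\) is Zariski-dense'') is that both \(\rk^\Gamma_{\Wmodk{}}\) and \(\rk^\chi_\Gamma\) are compatible with passing to finitely generated overgroups --- the former because it is pulled back from \(\End_\bbc(\Wmodk{})\), the latter by Lemma~\ref{lem-independence-of-subgroup} --- so one may enlarge \(\Gamma\) by adjoining finitely many elements of \(\schG\) to make it Zariski-dense without changing either side of the desired limit.
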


\bibliographystyle{alpha}
\bibliography{main}
\end{document}